\numberwithin{equation}{section}
\renewcommand{\dim}{\mathrm{dim}\,}
\newcommand{\diam}{\mathrm{diam}}
\newcommand{\dist}{\mathrm{dist}}
\newcommand{\sub}{\subset}
\newcommand{\R}{\mathbb{R}}
\newcommand{\RR}{\mathbb{R}}
\newcommand{\ZZ}{\mathbb{Z}}
\newcommand{\eps}{\varepsilon}
\newcommand{\bs}{\backslash}
\newcommand{\Ga}{\alpha}
\newcommand{\Ge}{\varepsilon}
\newcommand{\Gl}{\lambda}
\newcommand{\Gs}{\sigma}
\DeclarePairedDelimiter{\norm}{\lVert}{\rVert}
\newtheorem{thm}{Theorem}
\numberwithin{thm}{section}
\newtheorem{conj}[thm]{Conjecture}
\newtheorem{prop}[thm]{Proposition}
\newtheorem{defn}[thm]{Definition}
\newtheorem{lem}[thm]{Lemma}
\newtheorem{cor}[thm]{Corollary}
\theoremstyle{remark}
\newtheorem{rem}[thm]{Remark}
\begin{document}

\title[Restricted projection to lines in $\mathbb{R}^3$]{A Furstenberg-type problem for circles, and a Kaufman-type restricted projection theorem in $\mathbb{R}^3$}
\author{Malabika Pramanik, Tongou Yang and Joshua Zahl}
\address[Malabika Pramanik]{Department of Mathematics, The University of British Columbia, Vancouver, B.C. Canada V6T 1Z2}
\email{malabika@math.ubc.ca}

\address[Tongou Yang]{Department of Mathematics, The University of British Columbia, Vancouver, B.C. Canada V6T 1Z2\\
Current Address:  Department of Mathematics, The University of California, Los Angeles, CA, United States 90095 
}

\email{tongouyang@math.ucla.edu}

\address[Joshua Zahl]{Department of Mathematics, The University of British Columbia, Vancouver, B.C. Canada V6T 1Z2}
\email{jzahl@math.ubc.ca}


\begin{abstract}
    We resolve a conjecture of F\"{a}ssler and Orponen on the dimension of exceptional projections to one-dimensional subspaces indexed by a space curve in $\mathbb R^3$. We do this by obtaining sharp $L^p$ bounds for a variant of the Wolff circular maximal function over fractal sets for a class of $C^2$ curves related to Sogge's cinematic curvature condition. A key new tool is the use of lens cutting techniques from discrete geometry.
\end{abstract}

\maketitle


\section{Introduction}
In 1954, Marstrand \cite{Marstrand} proved that if $Z\subset\RR^2$ is Borel, then $\dim(Z\cdot v)=\min(\dim Z, 1)$ for a.e.~$v\in S^1$. Here and throughout, ``$\dim$'' will mean Hausdorff dimension. This was sharpened by Kaufman \cite{Kaufman} in 1968, who bounded the size of the set of exceptional directions for which the above inequality fails.
\begin{thm}[Kaufman]\label{KaufmanThm}
Let $Z\subset\RR^2$ be Borel, and let $0\leq s < \min\{\dim Z, 1\}$. Then
\begin{equation}\label{KauffmanPlane}
\dim\{ \theta\in S^1\colon \dim(Z\cdot\theta)<s\}\leq s.
\end{equation}
\end{thm}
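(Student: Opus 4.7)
The plan is to adapt the classical potential-theoretic (energy) argument, which reduces the theorem to a single uniform ``transversality'' estimate. Write $\pi_\theta(x):=x\cdot\theta$ for $\theta\in S^1$, let
\begin{equation*}
E := \{\theta\in S^1 \colon \dim(\pi_\theta(Z))<s\},
\end{equation*}
and suppose for contradiction that $\dim E>s$. I fix parameters $s_0,t,s'$ with $s_0<s<t<\dim E$ and $s<s'<\min(\dim Z,1)$, chosen close enough to $s$ so that all constraints hold simultaneously. By Frostman's lemma I select a compactly supported probability measure $\mu$ on a compact subset of $Z$ with $\mu(B(x,r))\le Cr^{s'}$, and a nonzero compactly supported measure $\nu$ on $E$ with $\nu(B(\theta,r))\le Cr^{t}$. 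Since $\mu$ is compactly supported and $s_0<s'$, its $s_0$-energy $I_{s_0}(\mu)=\iint|x-y|^{-s_0}\,d\mu(x)d\mu(y)$ is finite.

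Next I compute the $\nu$-average of the $s_0$-energies of the push-forwards $(\pi_\theta)_*\mu$. An application of Fubini yields
\begin{equation*}
\int I_{s_0}\bigl((\pi_\theta)_*\mu\bigr)\,d\nu(\theta) = \iint |x-y|^{-s_0}\,K_\nu\!\left(\tfrac{x-y}{|x-y|}\right)d\mu(x)d\mu(y),\qquad K_\nu(\omega) := \int|\omega\cdot\theta|^{-s_0}\,d\nu(\theta).
\end{equation*}
The key step, and where I expect the main difficulty to lie, is the uniform bound $K_\nu(\omega)\le C$ for every $\omega\in S^1$. I prove it by a dyadic decomposition: the sublevel set $\{\theta\in S^1:|\omega\cdot\theta|\le\delta\}$ consists of two short arcs of length $O(\delta)$ centered at $\pm\omega^\perp$, so the Frostman condition on $\nu$ gives $\nu(\{|\omega\cdot\theta|\le\delta\})\le C\delta^{t}$. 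Splitting the integral into dyadic annuli $\{|\omega\cdot\theta|\sim 2^{-k}\}$ leads to a geometric series $\sum_{k\ge 0}2^{ks_0}\cdot 2^{-kt}$, which converges because $t>s_0$.

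Combining these two bounds, the double integral is at most $C\cdot I_{s_0}(\mu)<\infty$, so $(\pi_\theta)_*\mu$ has finite $s_0$-energy for $\nu$-almost every $\theta$. By the standard energy/dimension inequality this forces $\dim(\pi_\theta(Z))\ge s_0$ for such $\theta$. Letting $s_0$ run through a countable sequence increasing to $s$ shows $\dim(\pi_\theta(Z))\ge s$ for $\nu$-almost every $\theta$, contradicting the fact that $\nu$ is supported on $E$. The entire geometric content of the argument is concentrated in the transversality bound for $K_\nu$, which relies on the elementary but crucial fact that the bad-direction set $\{\omega^\perp,-\omega^\perp\}$ is zero-dimensional; in richer settings (projections from curved families, or higher codimension) this is exactly the point that requires new geometric ideas, and is the analog of what the present paper develops for space curves in $\RR^3$.
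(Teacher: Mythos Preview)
The paper does not prove Theorem~\ref{KaufmanThm}; it is stated as a classical result of Kaufman \cite{Kaufman} and serves only as motivation for the restricted projection theorems that follow. So there is no ``paper's own proof'' to compare against.

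Your argument is correct and is essentially Kaufman's original potential-theoretic proof. The Fubini computation, the sublevel-set estimate $\nu(\{\theta:|\omega\cdot\theta|\le\delta\})\lesssim\delta^{t}$, and the resulting bound $K_\nu(\omega)\lesssim\sum_{k\ge0}2^{k(s_0-t)}<\infty$ are all standard and correctly executed. One small technical point you glide over: to apply Frostman's lemma to $E$ you need $E$ to carry a nontrivial Frostman measure, which requires $E$ to be analytic (or that you work instead with an arbitrary Borel subset of $S^1$ of dimension $>s$ and show it cannot lie entirely inside $E$). This is routine to arrange and does not affect the substance of your proof.

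Your closing remark is apt: the entire geometric content is the zero-dimensionality of the bad-direction set $\{\pm\omega^\perp\}$, and the paper's Theorem~\ref{thm_KaufmanRestrictedProjThm} is precisely about what replaces this transversality when the family of directions is a nondegenerate curve in $S^2$ rather than all of $S^1$. There the analogous sublevel sets are no longer short arcs, the naive energy argument fails, and one is forced into the incidence-geometric machinery (cinematic curvature, tangency rectangles, lens counting) developed in Sections~\ref{geomOfCinematicSection}--\ref{proofOfMaximalThmSection}.
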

Marstrand and Kaufman's results have been generalized to higher dimensions and other settings, and they are the foundation of projection theory, which is an active area in geometric measure theory. See \cite{Mattila} for an introduction to the topic, or \cite{FalFraJin} for a recent survey of the area.

In this paper, we will be interested in analogues of the above results for Borel sets $Z\subset\RR^3$. As a starting point, one analogue of Marstrand's theorem says that $\dim(Z\cdot v)=\min(\dim Z, 1)$ for a.e.~$v\in S^2$. There are also analogues of Kaufman's theorem; for example, if $0\leq s < \min\{\dim Z, 1\}$ then
\begin{equation}\label{dimZLeq1}
\dim\{ v\in S^2\colon \dim(Z\cdot v)<s\}\leq 1+ s.
\end{equation}
See \cite{Mattila1975} for details. In particular, \eqref{dimZLeq1} says that the set of $v\in S^2$ for which $\dim(Z\cdot v)=0$ has dimension at most 1, and this can be sharp---for example, if $Z$ is a subset of the $z$-axis then $Z\cdot v = \{0\}$ for each $v$ in the great circle $S^2 \cap \{z=0\}$. It is possible that stronger bounds hold if we restrict the set of projection directions to a subset of $S^2$ that does not concentrate near great circles. In this direction, F\"assler and Orponen \cite{FasslerOrponen} conjectured the following restricted Marstrand-type estimate.
\begin{conj}\label{conj_FasslerOpronen}
 Let $I\subset\RR$ be a compact interval and let $\gamma:I\to  S^2$ be a $C^2$ curve that satisfies the ``escaping great circle'' condition
\begin{equation}\label{eqn_torsion}
    \operatorname{span}\big\{\gamma(\theta),\dot\gamma(\theta),\ddot\gamma(\theta)\big\}=\RR^3\quad\textrm{for all}\  \theta\in I.
\end{equation}
Let $Z\subset \RR^3$ be analytic. Then for almost every $\theta\in I$, $\dim(\gamma(\theta)\cdot Z) = \min(\dim Z, 1)$.
\end{conj}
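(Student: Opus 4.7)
I would aim to prove the stronger Kaufman-type exceptional-set estimate: for every $0\le s<\min(\dim Z,1)$,
\[
\dim\{\theta\in I:\dim(\gamma(\theta)\cdot Z)<s\}\le s.
\]
Since the right-hand side is strictly less than $1$, the exceptional set has Lebesgue measure zero, which is exactly Conjecture \ref{conj_FasslerOpronen}. Reducing such a fractal statement to a $\delta$-discretized estimate is standard Kaufman-type potential theory: let $\mu$ be a Frostman measure on $Z$ of exponent slightly above $s$, suppose for contradiction that the exceptional set carries a Frostman measure $\nu$ of exponent $>s$, and attempt to bound
\[
(\mu\times\mu\times\nu)\big\{(z_1,z_2,\theta):|\gamma(\theta)\cdot(z_1-z_2)|\le\delta\big\}
\]
by $\delta^{s+\epsilon}$ for some $\epsilon>0$ uniformly in small $\delta$.

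The heart of the plan is to dualize this estimate to a Wolff-type maximal inequality for a family of $C^2$ curves. For each fixed $z\in\operatorname{supp}\mu$, the map $\theta\mapsto\gamma(\theta)\cdot z$ is a curve in $I\times\RR$; two such curves (for distinct $z$) may be tangent, but the hypothesis \eqref{eqn_torsion}---that $\gamma,\dot\gamma,\ddot\gamma$ span $\RR^3$---precisely forces every tangency to be nondegenerate. This is exactly Sogge's cinematic curvature condition, and the desired discretized bound is equivalent to an $L^p$ inequality for the associated maximal operator tested against $\nu$, where the target exponent $p$ is dictated by the required dimension $s$. Geometrically, the planes $\Pi_{\theta,c}=\{z:\gamma(\theta)\cdot z=c\}$ form a one-parameter family whose envelope curves in $\RR^3$ have non-vanishing curvature under \eqref{eqn_torsion}, which is the spatial counterpart of cinematic curvature.

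The main obstacle is establishing this maximal inequality sharply over fractal measures. Wolff's classical $L^3$ bound for the circular maximal function, based on a three-circle tangency/incidence estimate, does not suffice: one needs the analogue at exponents above $3$ and with sharp dependence on the Frostman dimensions of both $\mu$ and $\nu$. My plan is to import the lens-cutting technique from combinatorial geometry: thickenings of the curves from the family partition the plane into cells and lenses (bigons bounded by pairs of tangent curves), and one bounds high-multiplicity lenses by recursively cutting them into simpler sub-lenses whose combinatorial complexity is controlled by the cinematic curvature hypothesis. Combining such an incidence bound with a two-ends/broad--narrow dichotomy and a multi-scale decomposition of $\mu$ and $\nu$ should yield the needed inequality. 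The most delicate point will be carrying out the cutting procedure without incurring any $\delta^{-\epsilon}$ loss, since even such a loss would push the final exceptional dimension above $s$ and destroy the conclusion.
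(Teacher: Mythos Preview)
Your overall architecture---reduce to a Kaufman-type exceptional set estimate, discretize via Frostman measures, recast as a Wolff-type maximal inequality for the curves $\theta\mapsto\gamma(\theta)\cdot z$, and verify cinematic curvature from the spanning hypothesis \eqref{eqn_torsion}---matches the paper exactly. But two of your stated obstacles are misidentified, and one of them points to a real gap.

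First, your claim that ``even a $\delta^{-\epsilon}$ loss would push the final exceptional dimension above $s$'' is incorrect. The argument is by contradiction: assuming $\dim\Theta=\alpha>s$, one chooses $\epsilon$ small relative to the gap $\alpha-s$ (the paper takes $\epsilon=(\alpha-s)/(2C+11)$), and then $\delta^{-C\epsilon}$ losses are absorbed into that gap. The paper's maximal inequality \eqref{eqn_L32Bd_general} carries a $\delta^{-C\epsilon}$ factor throughout and this causes no trouble.

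Second, and more substantively: you say Wolff's $L^3$ bound does not suffice and one needs ``the analogue at exponents above $3$.'' The exponent is not the issue---$L^{3/2}$ is exactly right. What is needed is to restrict the \emph{domain of integration} to a $(\delta,\alpha)\times(\delta,\alpha)$ quasi-product set that encodes the fractal structure of both $\Theta$ and the small projections $\gamma(\theta)\cdot Z$. Over all of $[0,1]^2$ the $L^{3/2}$ norm can be as large as $\delta\#F$, which suffices for the Marstrand-type Conjecture \ref{conj_FasslerOpronen} but not for the Kaufman bound you are aiming for. Your proposal does not contain this idea.

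Third, your proposed tool---``lens-cutting,'' i.e.\ partitioning the plane into cells and recursively cutting---is the classical cuttings method, and the paper explains why this fails here: cuttings (and polynomial partitioning) require the curves to be algebraic or at least well-approximable by low-degree polynomials, whereas $\gamma$ is only $C^2$. The paper's substitute is the Marcus--Tardos bound on non-overlapping lenses in an arrangement of pseudo-circles (their Lemma~10), which needs only the topological fact that any two curves meet at most twice. One then perturbs the curves by $O(\delta)$ so that tangency rectangles correspond to genuine lenses; see Proposition \ref{thm_number_estimate}. This is the new combinatorial input, and it is not a cuttings argument.
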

In \cite{FasslerOrponen}, F\"assler and Orponen proved Conjecture \ref{conj_FasslerOpronen} in the special case $\dim(Z) \leq 1/2$. In \cite{He}, He considered the related problem of estimating the size of the set where $\dim (\gamma(\theta)\cdot Z)$ is close to $(\dim Z)/3$. In \cite{KOV2021}, K\"aenm\"aki, Orponen, and Venieri used ideas from incidence geometry, and specifically a circle tangency bound due to Wolff \cite{Wolff2000} to prove Conjecture \ref{conj_FasslerOpronen} (for all $Z$) in the special case $\gamma(\theta) = \frac{1}{\sqrt 2}(\cos \theta, \sin \theta, 1)$. Our contribution is the following Kaufman-type sharpening of Conjecture \ref{conj_FasslerOpronen}.

\begin{thm}\label{thm_KaufmanRestrictedProjThm}
Let $I\subset\RR$ be a compact interval and let $\gamma:I\to  S^2$ be a $C^2$ curve that satisfies the ``escaping great circle'' condition \eqref{eqn_torsion}. Let $Z\subset\RR^3$ be analytic and let $0\leq s < \min\{\dim Z, 1\}$. Then
\begin{equation}\label{eqn_boundOnDimTheta}
\dim\{ \theta\in I\colon \dim(Z\cdot\gamma(\theta))<s\}\leq s.
\end{equation}
In particular, Conjecture \ref{conj_FasslerOpronen} is true.
\end{thm}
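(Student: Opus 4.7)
The plan is to carry out the usual Frostman/energy reduction down to a discrete slab problem, use the escaping great circle condition \eqref{eqn_torsion} to recast this as a question about tangencies among a cinematic family of planar curves, and finally establish a sharp fractal $L^p$ bound for the associated Wolff-type maximal function using the lens cutting techniques advertised in the abstract.

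In more detail, suppose for contradiction that the set $E_s := \{\theta\in I : \dim(Z\cdot\gamma(\theta))<s\}$ has Hausdorff dimension strictly greater than $s$. Pick $s'\in (s,\dim E_s)$ and $t\in (s, \min(\dim Z,1))$, and use Frostman's lemma to choose compactly supported probability measures $\mu$ on $Z$ and $\nu$ on $E_s$ satisfying $\mu(B(x,r))\lesssim r^t$ and $\nu(B(\theta,r))\lesssim r^{s'}$. For each $\theta\in\operatorname{supp}\nu$ the definition of $E_s$ yields a cover of $Z\cdot\gamma(\theta)$ by $\lesssim \delta^{-s}$ intervals of length $\delta$; after dyadic pigeonholing in $\delta=2^{-k}$ and in the cover, one extracts a slab $S_\theta := \{x\in\RR^3 : \gamma(\theta)\cdot x \in [a_\theta, a_\theta+\delta]\}$ with $\mu(S_\theta)\gtrsim (\log\tfrac{1}{\delta})^{-C}\delta^s$ on a $\nu$-large subset of $\theta$. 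Passing to this subset, the duality
\[
    (\log\tfrac{1}{\delta})^{-C}\delta^s \lesssim \int \mu(S_\theta)\, d\nu(\theta) = \int \nu\{\theta : x\in S_\theta\}\, d\mu(x)
\]
reduces Theorem \ref{thm_KaufmanRestrictedProjThm} to showing that the right-hand side is $o(\delta^s)$ as $\delta\to 0$.

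To bound the right-hand side, I would reinterpret the slabs as curves in a dual space: for each fixed $x\in\RR^3$ the set $\{\theta : x\in S_\theta\}$ is a $\delta$-neighborhood of the graph of $\theta\mapsto \gamma(\theta)\cdot x$ in the $(\theta,a)$-plane, so varying $x$ gives a two-parameter family of smooth curves. Condition \eqref{eqn_torsion} says that $\{\gamma,\dot\gamma,\ddot\gamma\}$ spans $\RR^3$ at every $\theta$, and this translates into Sogge's cinematic curvature condition for the family: for distinct $x,y$ the curves $\theta\mapsto \gamma(\theta)\cdot x$ and $\theta\mapsto \gamma(\theta)\cdot y$ may be tangent to first order at a common point but never to second order, yielding a Wolff-type $\delta$-tangency bound which asserts that among $N$ $\delta$-separated curves the number of $\delta$-tangent pairs is $O(N^{3/2+\varepsilon}\delta^{-\varepsilon})$.

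The main obstacle, and the technical heart of the paper, is upgrading this geometric tangency bound to a sharp fractal $L^p$ inequality adapted to $\mu$ and $\nu$, with an exponent good enough to beat the $\delta^s$ lower bound from the duality above. I would follow the lens-cutting blueprint from incidence geometry: after dyadically decomposing $\mu$ into $\delta^t$-weighted atoms, view the dual curves as a family of smooth arcs in the $(\theta,a)$-plane and cut each arc into pseudosegments so that any two pseudosegments cross in at most one point, controlling the total number of cuts via the Tamaki--Tokuyama/Aronov--Sharir envelope bound under cinematic curvature; then apply a Sz\'emer\'edi--Trotter-type incidence bound between pseudosegments and Frostman points to obtain an estimate of the schematic form $\|\nu\{\theta: x\in S_\theta\}\|_{L^p(d\mu)} \lesssim \delta^{-\varepsilon}\delta^{\beta(p,s',t)}$. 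Plugging this into the duality inequality, choosing $p$ optimally in terms of $s'$ and $t$, and letting $\delta\to 0$ yields the desired contradiction for any $s'>s$, forcing $\dim E_s\leq s$. The delicate part, as is typical for Wolff-type problems, will be ensuring that the exponent in the lens-cutting incidence count is sharp enough to produce the optimal exponent $s$ (rather than, say, $\tfrac{s+1}{2}$) on the right-hand side of \eqref{eqn_boundOnDimTheta}.
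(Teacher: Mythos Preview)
Your broad strategy---Frostman measures, dualizing to planar curves $\theta\mapsto\gamma(\theta)\cdot x$, recognizing cinematic curvature from \eqref{eqn_torsion}, then appealing to a Wolff-type tangency bound---matches the paper. But the single-slab extraction is a genuine gap. By pigeonholing to one interval $[a_\theta,a_\theta+\delta]$ per direction you retain only that $\mu(S_\theta)\gtrsim\delta^s$, and the resulting incidence problem (curves $\Gamma_x$ against the graph $\{(\theta,a_\theta)\}$) does not carry enough structure to force the sharp exponent. Concretely: your domain of integration in the $(\theta,a)$-plane has measure $\sim\delta^{2-s'}$, and running H\"older against the paper's own $L^{3/2}$ maximal bound (Theorem~\ref{thm_wolffAnalogue_maximal_general}) only yields $3s\geq s'$, not the needed $s\geq s'$. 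The paper's fix---which it flags explicitly as the ``twist'' over Bourgain's argument---is to cover each projection $\rho_\theta(Z)$ not by arbitrary $\delta$-intervals but by a $(\delta,s;\delta^{-\eps})_1$-set $G_\theta$, i.e.\ one obeying the Frostman non-concentration bound \eqref{nonConcentrationKatzTao}. This produces a $(\delta,\alpha)_1\times(\delta,s)_1$ \emph{quasi-product} $E$ of measure $\lesssim\delta^{2-\alpha-s}$; H\"older against the $L^{3/2}$ bound over $E$ then gives exactly $s\geq\alpha$, the desired contradiction.

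Two further points. First, the exponent $p=3/2$ is not a free parameter to be optimized: it is dictated by the Marcus--Tardos bound that $n$ pseudo-circles determine $O(n^{3/2}\log n)$ non-overlapping lenses, which the paper converts (Proposition~\ref{thm_number_estimate}) into the bipartite tangency-rectangle estimate \eqref{boundNumberTangencyRectangles}. Second, the Tamaki--Tokuyama/Aronov--Sharir pseudosegment machinery you invoke relies on algebraic or at least constant-description-complexity curves; the paper stresses that for merely $C^2$ curves these cutting methods fail, and the replacement is precisely the topological Marcus--Tardos lens count, combined with a new bilinear-to-linear argument (Section~\ref{proofOfMaximalThmSection}) that avoids the induction on scales your Schlag-style outline would need.
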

\emph{Added November 14, 2022:} There have been several exiting recent developments related to Theorem \ref{thm_KaufmanRestrictedProjThm}. First, Gan, Guth and Maldague \cite{GGM} have also recently and independently resolved Conjecture \ref{conj_FasslerOpronen}, using a different method related to decoupling. Second, Gan, Guo, Guth, Harris, Maldague and Wang \cite{GGGHMW} solved a closely related conjecture of F\"{a}ssler and Orponen, where projections onto one-dimensional subspaces of $\RR^3$ are replaced by projections onto two-dimensional subspaces. Finally, Harris \cite{Harris2022} proved that if $\dim Z>1$, then $\gamma(\theta)\cdot Z$ has positive Lebesgue measure for almost every $\theta$.


\subsection{A Furstenberg-type problem for circles}
Theorem \ref{thm_KaufmanRestrictedProjThm} follows from an $L^p$ bound for a variant of Wolff's circular maximal function, where circles have been replaced by a class of $C^2$ curves that are the graphs of we call ``cinematic functions,'' and the domain of integration is restricted to a special type of fractal set. To state this result, we will introduce several definitions. First, we recall the definition of a $(\delta,\alpha;C)_1$-set from \cite{KatzTao}.

\begin{defn}\label{defnDeltaAlphaCSet}
Let $0<\alpha\leq 1$, $\delta >0$, $C\geq 1$. A set $E\subset[0,1]$ is called a $(\delta,\alpha;C)_1$-set if
\begin{enumerate}[(a)]
\item $E$ is a union of intervals of length $\delta$.
\item For every interval $I\subset[0,1]$, we have the Frostman-type non-concentration condition
\begin{equation}\label{nonConcentrationKatzTao}
 |E\cap I| \leq C\delta^{1-\alpha}|I|^\alpha.
\end{equation}
\end{enumerate}
\end{defn}
\noindent Since \eqref{nonConcentrationKatzTao} always holds for intervals of length $\leq\delta$, every $(\delta,\alpha;C)_1$-set is also a $(\delta,\beta;C)_1$-set if $\beta\geq \alpha$. We will be concerned with certain subsets of the plane, which are fibered products of $(\delta,\alpha;C)_1$-sets.
\begin{defn}
Let $0<\alpha,\beta\leq 1$, $\delta >0$, and $C\geq 1$. A set $E\subset[0,1]^2$ is called a $(\delta,\alpha;C)_1\times (\delta,\beta;C)_1$ quasi-product if $E$ is Borel and can be expressed in the form
\[
E = \bigcup_{a\in A} \{a\}\times B_a,
\]
where $A$ is a $(\delta,\alpha;C)_1$-set and $B_a$ is a $(\delta,\beta;C)_1$-set for each $a\in A$.
\end{defn}

Our main result is a maximal function estimate, where the domain of integration is a quasi-product. Our next task is to introduce the class of objects that can be analyzed using our maximal function estimate. The following definition was inspired by Sogge's notion of cinematic curvature \cite{Sogge}, which was considered by Kolasa and Wolff \cite{KolasaWolff} in the context of the Wolff circular maximal function.
\begin{defn}\label{cinematicFunctionsDef}
Let $I\subset\RR$ be a compact interval and let $\mathcal{F}\subset C^2(I)$. We say $\mathcal{F}$ is a \emph{family of cinematic functions}, with \emph{cinematic constant} $K$ and \emph{doubling constant} $D$, if the following conditions hold.
\begin{enumerate}
\item $\mathcal{F}$ has diameter at most $K$ (here and in what follows, we use the usual metric on $C^2(I)$).
\item  $\mathcal{F}$ is a doubling metric space, with doubling constant at most $D$.
\item For all $f,g\in \mathcal{F},$ we have
\begin{equation}\label{cinematicFunctionCondition}
\inf_t\big( |f(t)-g(t)| + |f'(t)-g'(t)| + |f''(t)-g''(t)|\big) \geq K^{-1} \Vert f-g\Vert_{C^2(I)}.
\end{equation}
\end{enumerate}
\end{defn}

With these definitions, we can now state the following variant of Wolff's circular maximal function theorem.
\begin{thm}\label{thm_wolffAnalogue_maximal_general}
Let $\eps>0$, $0<\alpha\leq\zeta\leq 1,$ and $D,K\geq 1$. Then the following is true for all $\delta>0$ sufficiently small (depending on $D,K,\eps$ only). Let $\mathcal{F}$ be a family of cinematic functions, with cinematic constant $K$ and doubling constant $D$. Let $E$ be a $(\delta,\alpha;\delta^{-\eps})_1\times(\delta,\alpha;\delta^{-\eps})_1$ quasi-product.

Let $F\subset\mathcal{F}$ be a set of functions that satisfies the Frostman-type non-concentration condition
\begin{equation}\label{frostmanConditionOnF}
\#(F \cap B) \leq  \delta^{-\eps}(r/\delta)^\zeta\quad\textrm{for all balls}\ B\subset C^2(I)\ \textrm{of radius}\ r\geq\delta.
\end{equation}

Then
\begin{equation}\label{eqn_L32Bd_general}
 \int_E\Big( \sum_{f\in F}{\bf 1}_{f^\delta}\Big)^{3/2} \leq \delta^{2-\alpha/2-\zeta/2-C\eps}(\# F),
\end{equation}
where $C=C(D)$ depends only on $D$, and $f^\delta$ is the $\delta$-neighborhood of the graph of $f$.
\end{thm}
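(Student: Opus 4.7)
The plan is to follow the blueprint of Wolff's $L^3$ circular maximal function argument \cite{Wolff2000}, adapted to cinematic curves by Kolasa--Wolff \cite{KolasaWolff}, with two substantive modifications: working with the abstract class $\mathcal F$ of cinematic functions rather than circles, and incorporating the fractal quasi-product structure of $E$ together with the Frostman condition \eqref{frostmanConditionOnF} on $F$. Writing $M(x):=\sum_{f\in F}\mathbf 1_{f^\delta}(x)$ and $E_\mu:=\{x\in E:M(x)\sim\mu\}$ for dyadic $\mu\geq 1$, a logarithmic pigeonholing reduces \eqref{eqn_L32Bd_general} to the superlevel-set estimate $\mu^{3/2}|E_\mu|\lesssim \delta^{2-\alpha/2-\zeta/2-C\eps}(\#F)$ for each dyadic $\mu$, with the $\log(1/\delta)$ loss absorbed into $\delta^{-C\eps}$.

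The second step converts this superlevel-set bound into a tangency/incidence-counting problem. Covering $E_\mu$ by $\delta$-balls, each ball is met by $\gtrsim\mu$ tubes $f^\delta$, which produces $\gtrsim\mu^2$ ordered pairs $(f,g)\in F^2$ whose $\delta$-neighborhoods both pass near that ball. By the cinematic condition \eqref{cinematicFunctionCondition}, a typical pair of distinct functions can contribute to only $O(1)$ such balls, so if $T$ denotes the number of ordered pairs $(f,g)\in F^2$ that are $\delta$-tangent at a point of $E$, then $T\gtrsim \mu^2|E_\mu|\delta^{-2}$ modulo diagonal terms and a $\delta^{-C\eps}$ factor. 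It therefore suffices to establish a tangency bound of the form $T\lesssim \mu^{1/2}\delta^{-\alpha/2-\zeta/2-C\eps}(\#F)$. I would split $T$ dyadically in $\Delta:=\|f-g\|_{C^2(I)}$: the doubling of $\mathcal F$ combined with \eqref{frostmanConditionOnF} controls the number of candidate pairs at each scale $\Delta$, while \eqref{cinematicFunctionCondition} controls the geometry and volume of the intersection lens $f^\delta\cap g^\delta$ (two approximately elliptical regions of dimensions $\delta\times\sqrt{\delta/\Delta}$ in the near-tangent regime, and a smaller transverse intersection otherwise) as well as its intersection with $E$, via the quasi-product Frostman structure.

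The main obstacle, and the point where the lens-cutting technique from discrete geometry enters, is this tangency bound on a fractal support. In the circle case, Wolff obtained $\lesssim N^{3/2+\eps}$ tangent pairs among $N$ well-distributed circles by a three-parameter incidence argument. Here the corresponding bound must simultaneously accommodate (i) cinematic graphs rather than circles, with the three-part cinematic distance \eqref{cinematicFunctionCondition} together with doubling of $\mathcal F$ playing the role of the $3$-parameter circle family; (ii) Frostman families $F$ governed by $\zeta$; and (iii) fractal quasi-product support $E$ governed by $\alpha$. The plan is to decompose each pairwise intersection lens into pieces and apply a lens-cutting argument to bound the total count of lenses whose tangent point lies in $E$, producing the required dependence on $\#F$, $\alpha$, and $\zeta$. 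Combined with the reduction of the previous paragraph and summed dyadically over $\mu$ and $\Delta$, this refined tangency bound yields the superlevel-set estimate, and hence \eqref{eqn_L32Bd_general}.
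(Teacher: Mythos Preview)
Your outline has the right high-level architecture (superlevel-set reduction, tangency counting, lens methods), but there is a genuine gap in the second step and a structural omission in the third.

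First, the claim that ``a typical pair of distinct functions can contribute to only $O(1)$ such balls'' is false. If $f,g\in F$ are nearly tangent, the region $f^\delta\cap g^\delta$ has dimensions roughly $\delta\times \delta/\sqrt{(\Delta(f,g)+\delta)\,\Vert f-g\Vert}$ (Lemma~\ref{prop_two_zeros}), which covers $\sim (\delta(\Delta+\delta)\Vert f-g\Vert)^{-1/2}$ many $\delta$-balls --- possibly $\delta^{-1/2}$ of them. Consequently your inequality $T\gtrsim \mu^2|E_\mu|\delta^{-2}$ is far off, and the target bound $T\lesssim \mu^{1/2}\delta^{-\alpha/2-\zeta/2-C\eps}(\#F)$ cannot be correct as stated, since your $T$ (a raw pair count) does not even depend on $\mu$. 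The $\mu$-dependence in the paper arises only after an $L^2$-type argument inside each coarse rectangle, not from pair counting alone.

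Second, a single dyadic decomposition in $\Vert f-g\Vert_{C^2}$ is not enough. The geometry of $f^\delta\cap g^\delta$ is governed by \emph{two} independent parameters: the $C^2$ distance $t=\Vert f-g\Vert$ and the tangency parameter $\Delta(f,g)=\min_\theta(|f-g|+|f'-g'|)$. The paper handles this by two separate two-ends reductions (Sections~\ref{subsub_two_ends} and the following subsection), localizing to a typical $t$ and a typical $\Delta$. It then introduces \emph{two} families of curvilinear rectangles: fine-scale $(\delta,t\Delta/\delta)$-rectangles covering $E_\mu$, and coarse-scale $(\Delta,t)$-rectangles. An $L^2$ argument bounds the number $M$ of fine rectangles inside each coarse one (this is where $\mu^{-3/2}$ appears), while the Marcus--Tardos lens-counting bound (Proposition~\ref{thm_number_estimate}) controls the number of coarse rectangles. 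The product of these two estimates gives \eqref{eqn_L32Bd_general}. Your sketch collapses this into a single-scale pair count, which loses exactly the mechanism that produces the correct $\mu$ and $\alpha$ exponents.

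Finally, a terminological point: the relevant discrete-geometry input is not classical lens \emph{cutting} (which relies on algebraicity and fails for $C^2$ curves) but the Marcus--Tardos bound on non-overlapping lenses in a pseudo-circle arrangement; one must perturb the cinematic curves to create genuine double intersections (Section~\ref{sub_perturb}) before that theorem applies.
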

\medskip
\noindent \emph{Remarks.}
\begin{enumerate}[{\bf 1.}]
\item Since $[0,1]^2$ is a $(\delta,1;1)_1\times (\delta,1;1)_1$ quasi-product and $I, \mathcal{F}$ are bounded, as a special case of Theorem \ref{thm_wolffAnalogue_maximal_general} we obtain the estimate
\begin{equation}\label{allOfR2}
\Big\Vert \sum_{f\in F}{\bf 1}_{f^\delta}\Big\Vert_{3/2} \leq \delta^{-C\eps}(\delta\# F)^{2/3},
\end{equation}
where $F$ satisfies \eqref{frostmanConditionOnF} for $\zeta=1$, and the order of parameters are the same as in the statement of Theorem \ref{thm_wolffAnalogue_maximal_general}.
\medskip

\noindent
\item \label{Special_case_circle} As a special case of \eqref{allOfR2}, Theorem \ref{thm_wolffAnalogue_maximal_general} generalizes Wolff's $L^{3/2}$ circular maximal function bound from \cite{WolffKakeyacircles}. The (upper) halves of circles with centers in a small neighborhood of the origin and radii between 1 and 2 form a family of cinematic functions $\mathcal{C}$ over the interval $[-1/2, 1/2]$. If we paramaterize a circle by it's center-radius pair $(x,y,r)$, then a set $C\subset\mathcal{C}$ obeys the non-concentration condition \eqref{frostmanConditionOnF} if the corresponding collection of centre-radius pairs $P_C\subset\RR^3$ obeys a corresponding Frostman-type non-concentration condition; namely,
\begin{equation}\label{frostmanForCircles}
\#(P_C \cap B) \leq  \delta^{-\eps}(r/\delta)\quad\textrm{for all balls}\ B\subset \RR^3\ \textrm{of radius}\ r\geq\delta.
\end{equation}
Thus if $C$ is a set of circles with centers in $[0,1]^2$ and radii between 1 and 2, whose center-radius pairs obey \eqref{frostmanForCircles}, then Theorem \ref{thm_wolffAnalogue_maximal_general} says that
\begin{equation}
\Big\Vert \sum_{c\in C}{\bf 1}_{c^\delta}\Big\Vert_{3/2} \leq \delta^{-C\eps}(\delta\# C)^{2/3}.
\end{equation}
In \cite{WolffKakeyacircles}, Wolff proved \eqref{thm_wolffAnalogue_maximal_general} in the special case where the circles in $C$ had $\delta$-separated radii. 
\medskip

\item Theorem \ref{thm_wolffAnalogue_maximal_general} generalizes the third author's variable coefficient Wolff circular maximal function bound \cite{Zahl2012} from curves given by a $C^\infty$ defining function to curves given by a $C^3$ defining function (the latter is the minimal amount of regularity required to state Sogge's cinematic curvature condition for the defining function). A precise statement is slightly technical, and is given in Appendix \ref{cinematicCurvatureAppendix}.

\medskip

\item  Theorem \ref{thm_wolffAnalogue_maximal_general} has proved useful in other contexts beyond the proof of Theorem \ref{thm_KaufmanRestrictedProjThm}. For example, Wang and the third author \cite{WangZahl} used Theorem \ref{thm_wolffAnalogue_maximal_general} to prove a special case of the Kakeya conjecture for sticky Kakeya sets in $\RR^3$, and Katz, Wu, and the third author \cite{KatzWuZahl} used Theorem \ref{thm_wolffAnalogue_maximal_general} to prove a special case of the Kakeya conjecture for $SL_2$ Kakeya sets. Chang, Dosidis, and Kim \cite{CDK} used a corollary of Theorem \ref{thm_wolffAnalogue_maximal_general} to analyze a Nikodym-type maximal function associated to spheres in $\RR^n$. F\"assler and Orponen \cite{FasslerOrponen2} used Theorem \ref{thm_wolffAnalogue_maximal_general} to study vertical projections in the first Heisenberg group. We remark that in \cite{WangZahl}, it is crucial that Theorem \ref{thm_KaufmanRestrictedProjThm} holds for $C^2$ curves; previous maximal function bounds are insufficient.
\end{enumerate}

\noindent Theorem \ref{thm_wolffAnalogue_maximal_general} will be discussed further in Section \ref{thmthm_wolffAnalogue_maximal_generalSketchSection}, and the complete proof will be given in Section \ref{proofOfMaximalThmSection}.


\subsection{From maximal functions to restricted projections}\label{maximalToProjSubSec}
We will briefly sketch the proof of Theorem \ref{thm_KaufmanRestrictedProjThm} and describe the connection between Theorems \ref{thm_KaufmanRestrictedProjThm} and \ref{thm_wolffAnalogue_maximal_general}. For the purpose of this proof sketch, we will ignore many technical details. A complete proof is given in Section \ref{sec_proj_thm}. Let $Z\subset\RR^3$ be Borel with $\dim Z = \zeta\leq 1$, and let $0\leq s < \dim Z$. Let $\Theta = \{ \theta\in I\colon \dim(Z\cdot\gamma(\theta))< s\}$, and suppose for contradiction that $\dim\Theta=\alpha>s$.

In \cite{KOV2021}, K\"aenm\"aki, Orponen, and Venieri observed that this problem could be transformed to a question about curves in the plane. For each $z\in\RR^3$, define the plane curve
\begin{equation}\label{defnGammaZ}
\Gamma_z = \{(\theta, z\cdot \gamma(\theta) : \theta \in I\}.
\end{equation}
Then for each $\theta\in\Theta$, the vertical line $L_\theta = \RR\times\{\theta\}$ satisfies
$
\dim \big(L_\theta \cap \bigcup \Gamma_z \big) <s.
$

Next, we discretize the arrangement at a small scale $\delta>0$. The idea (ignoring some technical details) is that we replace $Z$ by a $\delta$-separated subset $Z_\delta$ of cardinality roughly $\delta^{-\zeta}$ that satisfies a Frostman-type non-concentration condition, and we replace $\Theta$ by a $(\delta, \alpha; \delta^{-\eps})_1$-set, which we denote by $\Theta_\delta$.  For each $z\in Z_\delta$, let $\Gamma^\delta_z$ be the $\delta$-neighborhood of the curve $\Gamma_z$. Then for most $\theta\in\Theta_\delta$, $L_\theta \cap \bigcup_{z\in Z_\delta}\Gamma^\delta_z$ will have one-dimensional Lebesgue measure at most $\delta^{1-\eta-\eps}$, and it will be contained in a $(\delta,s,\delta^{-\eps})_1$-set. Thus by H\"older's inequality we have that
\begin{equation}\label{big32Norm}
\int_{\Theta_\delta\times\RR}\Big(\sum_{z \in Z_\delta}{\bf 1}_{\Gamma^\delta_z}\Big)^{3/2} \geq \delta^{2+\frac{1}{2}s-\frac{3}{2}\zeta-\alpha+\eps}.
\end{equation}
Most of the integrand in \eqref{big32Norm} is supported on a $(\delta,\alpha;\delta^{-\eps})_1\times (\delta,\alpha;\delta^{-\eps})_1$ quasi-product. We can use the ``escaping great circle'' condition \eqref{eqn_torsion} to show that the set of curves $\{\Gamma^\delta_z\colon z\in Z_\delta\}$ are the graphs of functions from a family of cinematic functions, and the Frostman-type non-concentration condition on $Z_\delta$ implies the analogous bound \eqref{frostmanConditionOnF}. Thus we can apply Theorem \ref{thm_wolffAnalogue_maximal_general} to upper-bound \eqref{big32Norm}. Comparing \eqref{eqn_L32Bd_general} and \eqref{big32Norm} (and recalling that $Z_\delta$ has cardinality roughly $\delta^{-\zeta}$), we conclude that $\alpha\leq s$. This gives us our desired contradiction and completes the proof.

\medskip

\noindent\emph{Remarks}.
\begin{enumerate}[\bf 1.]
\item The main challenge when executing the proof strategy described above is that we must discretize the set $Z$, the set $\Theta$, and the sets $L_\theta \cap \bigcup \Gamma_z$ at a common scale $\delta$. We refer the reader to Section \ref{sec_proj_thm} for details.

\medskip
\item Our bound \eqref{eqn_L32Bd_general} in Theorem \ref{thm_wolffAnalogue_maximal_general} becomes stronger as the dimension $\alpha$ of the quasi-product $E$ decreases. With this sensitivity to $E$, Theorem \ref{thm_wolffAnalogue_maximal_general} is related to the Furstenberg set problem for curves; see \cite{Liu} for a related result on this topic. If we replace the domain of integration in \eqref{eqn_L32Bd_general} by $\RR^2$ or $[0,1]^2$, then it is possible for the LHS of \eqref{eqn_L32Bd_general} to be as large as $\delta \#F$. On the other hand, it is easy to verify that if the RHS of \eqref{eqn_L32Bd_general} were replaced by the weaker estimate $\delta \#F$, then this would not be sufficient to prove the Kaufman-type estimate from Theorem \ref{thm_KaufmanRestrictedProjThm} (though it would still be strong enough to prove Conjecture \ref{conj_FasslerOpronen}).
\end{enumerate}


\subsection{Proof sketch of Theorem \ref{thm_wolffAnalogue_maximal_general}}\label{thmthm_wolffAnalogue_maximal_generalSketchSection}
We conclude the introduction with a discussion of the main ideas in Theorem \ref{thm_wolffAnalogue_maximal_general} and a brief sketch the proof. In \cite{WolffKakeya}, Wolff proved that for each $\eps>0$ and all $\delta>0$ sufficiently small, if $C$ is a set of circles with $\delta$-separated radii $1\leq r\leq 2$, then
\begin{equation}\label{wolffBd}
\Big\Vert \sum_{c\in C}{\bf 1}_{c^\delta}\Big\Vert_{p}\leq \delta^{-\eps}(\delta\# C)^{1/p},\quad p = 3/2,
\end{equation}
where $c^\delta$ denotes the $\delta$-neighborhood of the circle $c$. Wolff's proof was quite technical, but it has been simplified in several follow-up works \cite{Wolff2000, Schlag}. The main ingredient in these later proofs was a bound on the number of ``tangency rectangles'' determined by the arrangement $C$. Informally, a tangency rectangle is a curvilinear rectangle $R$ of dimensions $\delta^{1/2}\times\delta$ (other dimensions are possible, but we will ignore this for now) that is contained in at least two annuli $c_1^\delta, c_2^\delta$, with  $c_1,c_2\in C$. Heuristically, we may suppose that every pair of circles $c_1,c_2\in C$ either intersect transversely, in which case $|c_1^\delta\cap c_2^\delta|\sim \delta^2$, or they intersect tangentially, in which case $|c_1^\delta\cap c_2^\delta|\sim \delta^{3/2}$ and the intersection is localized to a tangency rectangle.

The contribution to the LHS of \eqref{wolffBd} from the first type of intersection is negligible. Indeed, if every pair of circles intersected transversely, then \eqref{wolffBd} would hold for $p=2$, which is a stronger estimate. Thus the goal is to control the number of tangential intersections, or more precisely, the number of tangency rectangles where these intersections occur. Let $\mathcal{R}$ be a set of tangency rectangles that cover all the tangential intersections determined by $C$. For each $R\in\mathcal{R}$, let $m(R)$ be the number circles $c\in C$ with $R\subset c^\delta$. If the rectangles in $\mathcal{R}$ were disjoint and if the LHS of \eqref{wolffBd} was concentrated on these rectangles, then we would have a $\ell^{3/2}(\mathcal{R})\to L^{3/2}(\RR^2)$ bound of the form
\begin{equation}\label{heuristicRectangleBound}
\Big\Vert \sum_{c\in C}{\bf 1}_{c^\delta}\Big\Vert_{3/2} \leq \delta \Big(\sum_{\mathcal{R}} m(R)^{3/2}\Big)^{2/3}.
\end{equation}
We caution that reader that in general, \eqref{heuristicRectangleBound} is not true as stated. However, the heuristic \eqref{heuristicRectangleBound} was made precise by Schlag \cite{Schlag}, who used induction on scales and a bilinear $\implies$ linear argument to prove that \eqref{wolffBd} would follow from a bilinear version of the weak $\ell^{3/2}$ bound
\begin{equation}\label{boundNumberTangencyRectangles}
\#\{R \in \mathcal{R}\colon m(R)\geq k\} \leq (\# C / k)^{3/2}.
\end{equation}
Previously in \cite{Wolff2000}, Wolff had proved a suitable bilinear version of \eqref{boundNumberTangencyRectangles} using the ``cuttings'' method developed by researchers in computational geometry in the late 1980s (see \cite{CEGSW, SharirAgarwal}), and these two results combined to give a new proof of \eqref{wolffBd}.

The cuttings method makes crucial use of the property that circles are algebraic curves. In \cite{Zahl2012b}, the third author extended Wolff's arguments and the cuttings method to obtain an analogue of \eqref{wolffBd} where circles are replaced by algebraic curves of bounded degree that satisfy Sogge's cinematic curvature condition (see also \cite{KolasaWolff} for earlier work in this direction). In \cite{Zahl2012}, the third author used the newly developed polynomial partitioning technique of Guth and Katz \cite{GuthKatz} to extend \eqref{wolffBd} to smooth curves with cinematic curvature. All of these techniques, however, require that the curves are algebraic (or in the case of smooth curves, that they can be accurately approximated by moderately low degree algebraic curves). These methods do not work, and new ideas are needed, for curves that are merely $C^k$.

The main difficulty in proving Theorem \ref{thm_wolffAnalogue_maximal_general} is establishing an analogue of \eqref{boundNumberTangencyRectangles} for $C^2$ curves. We do this by adapting a result due to Marcus and Tardos \cite{MarcusTardos} from topological graph theory, which concerns objects called lenses. In brief, if $\mathcal{C}$ is a set of Jordan curves in the plane, then a \emph{lens} is a pair of curve-segments $\lambda_1 \subset\gamma_1;$ $\lambda_2\subset\gamma_2$; $\gamma_1,\gamma_2\in\mathcal{C},$ so that $\lambda_1$ and $\lambda_2$ intersect at their endpoints, and $\lambda_1\cup\lambda_2$ is homeomorphic to $S^1$. For example, if two circles $c_1,c_2$ with well-separated centers share a common tangency rectangle, then after a small perturbation we expect to find arcs of $c_1$ and $c_2$ of length roughly $\delta^{1/2}$ that create a lens.

Lemma 10 from \cite{MarcusTardos} says that an arrangement of $n$ closed Jordan curves that satisfy a certain ``pseudo-circle'' property determines $O(n^{3/2}\log n)$ pairwise non-overlapping lenses. We use this result to obtain an analogue of \eqref{boundNumberTangencyRectangles} by carefully perturbing our arrangement of curves, so that the number of tangency rectangles in $\mathcal{R}$ is comparable to the number of lenses. See Proposition \ref{thm_number_estimate} for a precise statement, and Section \ref{lensCountingBoundSection} for the details of this argument.

With Proposition \ref{thm_number_estimate} in hand, it remains to prove Theorem \ref{thm_wolffAnalogue_maximal_general}. We encounter several difficulties. First, our bilinear analogue of \eqref{boundNumberTangencyRectangles} is more general than the corresponding estimates from \cite{Wolff2000, Zahl2012b, Zahl2012} because it applies to $C^2$ curves rather than algebraic (or smooth) curves. However, our bilinear analogue of \eqref{boundNumberTangencyRectangles} requires slightly stronger assumptions, which do not work well with induction on scales. Thus we need a new bilinear $\implies$ linear argument that does not use induction on scales. Closely related to this difficulty is the annoyance that intersections between curves need not be completely tangential or transverse---there are many intermediate amounts of tangency or transversality. We handle these difficulties by combining a $L^2$ argument at fine scales with a suitable bilinear version of \eqref{boundNumberTangencyRectangles} at coarse scales. In brief, we define a parameter $\Delta$ that measures the typical amount of tangency between curves; if $\Delta = \delta$ then the main contribution to the LHS of \eqref{wolffBd} comes from pairs of curves that intersect completely tangentially, while if $\Delta = 1$ then the main contribution comes from pairs of curves that intersect completely transversely. We then divide $\RR^2$ into rectangles of dimensions $\Delta\times\Delta^{1/2}$. If two curves intersect, then typically this intersection is localized to one such rectangle, and the curves are tangent at scale $\Delta$. Thus we can use a bilinear analogue of \eqref{boundNumberTangencyRectangles} to control the number of tangency rectangles at scale $\Delta$. Within each rectangle, most pairs of curves intersect with a precisely prescribed amount of transversality. Thus we can use a $L^2$ argument to control the contribution from each rectangle. This is the most technical part of the paper, and the details are in Section \ref{proofOfMaximalThmSection}.

\subsection{Notation}\label{notationSection}
To reduce clutter, we will write $A\lesssim B$ or $A = O(B)$ if there is a constant $C>0$ so that $A\leq CB$. The constant $C$ will be allowed to depend on several quantities, which are context-dependent. Based on context, $C$ may depend on any of the quantities $|I|$, $\gamma$, and $\dim Z$ from the statement of Theorem \ref{thm_KaufmanRestrictedProjThm}, and also the quantites $K$, $D$, and $\eps$ from the statement of Theorem \ref{thm_wolffAnalogue_maximal_general}.

In the arguments that follow, we will frequently introduce a small parameter $\delta>0$, and we will employ dyadic pigeonholing arguments that replace a set $X$ with a subset of size roughly $|\log\delta|^{-1}(\#X)$. We will write $A\lessapprox B$ or $B\gtrapprox A$ if $A\lesssim|\log\delta|^{C_0}B$ for some absolute constant $C_0$ (in practice, we will always have $C_0\leq 100$). When we write $A\lessapprox B$, the relevant quantity $\delta$ will always be apparent from context.

\subsection{Thanks}
The authors would like to thank Orit Raz for many helpful comments and suggestions throughout the preparation of this manuscript. We would also like to thank Alan Chang, Jongchon Kim, and the anonymous referee for comments, suggestions, and corrections to an earlier version of this manuscript. All authors were supported by NSERC Discovery grants.


\section{From Theorem \ref{thm_wolffAnalogue_maximal_general} to Theorem \ref{thm_KaufmanRestrictedProjThm}}\label{sec_proj_thm}
In this section, we will expand upon the sketch from Section \ref{maximalToProjSubSec}. The following version of Theorem \ref{thm_wolffAnalogue_maximal_general} is a main ingredient of the proof.
\begin{prop}\label{thm_wolff_forGamma}
Let us fix $\eps>0$ and dimensional parameters $0<\alpha\leq\zeta\leq 1$. Let $I$ be a compact interval and let $\gamma\colon I\to S^2$ be a curve satisfying \eqref{eqn_torsion}. Then there exists a positive constant $\delta_0$ depending only on these quantities, such that the following is true for all $0 < \delta \leq \delta_0$.

Let $E$ be a $(\delta,\alpha;\delta^{-\eps})_1\times(\delta,\alpha;\delta^{-\eps})_1$ quasi-product. Let $Z_\delta\subset B(0,1)\subset\RR^3$ be a $\delta$-separated set that satisfies the Frostman-type non-concentration condition
\begin{equation}\label{eqn_frostmanNonConcentrationRequirementInProp}
\#(Z_\delta \cap B) \leq  \delta^{-\eps}(r/\delta)^\zeta\quad\textrm{for all balls}\ B\subset \RR^3\ \textrm{of radius}\ r\geq\delta.
\end{equation}
Then
\begin{equation}\label{eqn_L32BdGamma}
\int_E \Big(\sum_{z\in Z_\delta}{\bf 1}_{\Gamma^\delta_z} \Big)^{3/2}\leq \delta^{2-\alpha/2-\zeta/2-C\eps}(\# Z_\delta),
\end{equation}
where $C>0$ is an absolute constant, and $\Gamma^\delta_z$ is the $\delta$-neighborhood of the graph $\Gamma_z$ defined in \eqref{defnGammaZ}.
\end{prop}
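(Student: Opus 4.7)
The plan is to deduce Proposition \ref{thm_wolff_forGamma} directly from Theorem \ref{thm_wolffAnalogue_maximal_general} by encoding the curves $\Gamma_z$ as graphs of functions in an appropriate family of cinematic functions indexed by $z$. Specifically, for each $z\in\RR^3$ I define $f_z\in C^2(I)$ by $f_z(\theta)=z\cdot\gamma(\theta)$, so that the planar curve $\Gamma_z$ from \eqref{defnGammaZ} is literally the graph of $f_z$ and $\Gamma_z^\delta=f_z^\delta$. I then set $F=\{f_z:z\in Z_\delta\}$ and let $\mathcal{F}$ be the image of a slightly enlarged ball $B(0,2)\subset\RR^3$ under the linear map $z\mapsto f_z$. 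The task reduces to verifying (i) that $\mathcal{F}$ is a family of cinematic functions with constants depending only on $\gamma$ and $|I|$; (ii) that the Frostman condition \eqref{eqn_frostmanNonConcentrationRequirementInProp} on $Z_\delta$ implies \eqref{frostmanConditionOnF} for $F$; and (iii) that $\#F=\#Z_\delta$.

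The core step is showing that $z\mapsto f_z$ is bi-Lipschitz between $(\RR^3,|\cdot|)$ and its image in $C^2(I)$, with constants depending only on $\|\gamma\|_{C^2(I)}$ and the ``escaping great circle'' condition \eqref{eqn_torsion}. The upper Lipschitz bound $\|f_z-f_{z'}\|_{C^2}\leq\|\gamma\|_{C^2}|z-z'|$ is immediate from the linearity $f_z-f_{z'}=f_{z-z'}$. The lower bound follows from \eqref{eqn_torsion}: the matrix with rows $\gamma(\theta),\dot\gamma(\theta),\ddot\gamma(\theta)$ is invertible at every $\theta\in I$, so by continuity of its minimum singular value and compactness of $I$ there exists $c>0$ with
\[
|w\cdot\gamma(\theta)|+|w\cdot\dot\gamma(\theta)|+|w\cdot\ddot\gamma(\theta)|\geq c
\]
for every unit $w\in\RR^3$ and every $\theta\in I$. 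Applied with $w=(z-z')/|z-z'|$, this yields the stronger pointwise estimate
\[
\inf_\theta\big(|f_z(\theta)-f_{z'}(\theta)|+|f_z'(\theta)-f_{z'}'(\theta)|+|f_z''(\theta)-f_{z'}''(\theta)|\big)\geq c|z-z'|.
\]
Combined with the upper Lipschitz bound, this delivers both the bi-Lipschitz property and condition (3) of Definition \ref{cinematicFunctionsDef} with $K=\|\gamma\|_{C^2}/c$. The diameter bound (1) and the doubling constant (2) for $\mathcal{F}$ are inherited from $B(0,2)\subset\RR^3$ through this bi-Lipschitz equivalence.

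With the cinematic family in place, the remaining transfers are routine. A $C^2$-ball of radius $r$ in $\mathcal{F}$ pulls back under $z\mapsto f_z$ to a set of Euclidean diameter at most $c^{-1}r$, so \eqref{eqn_frostmanNonConcentrationRequirementInProp} gives $\#(F\cap B)\leq c^{-\zeta}\delta^{-\eps}(r/\delta)^\zeta$; the constant $c^{-\zeta}$ depends only on $\gamma$ and $|I|$ and is absorbed by choosing $\delta_0$ sufficiently small (or by replacing $\eps$ by a constant multiple). Similarly, $\delta$-separation of $Z_\delta$ in $\RR^3$ transfers to $\gtrsim\delta$-separation of $F$ in $C^2(I)$, giving $\#F=\#Z_\delta$. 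Applying Theorem \ref{thm_wolffAnalogue_maximal_general} with parameters $(\alpha,\zeta,\mathcal{F},F,E)$ and using $\Gamma_z^\delta=f_z^\delta$ yields \eqref{eqn_L32BdGamma}.

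The main obstacle here is bookkeeping rather than conceptual: the conclusion depends on verifying that every constant generated along the way (the bi-Lipschitz constant $c$, the cinematic constant $K$, the doubling constant of $\mathcal{F}$, and the multiplicative loss in the Frostman transfer) can be made to depend only on $\gamma$ and $|I|$, so that it can be hidden inside $\delta_0$ or absorbed into the $\delta^{-C\eps}$ factor without contaminating the final exponent in \eqref{eqn_L32BdGamma}. Once this accounting is done carefully, the proposition is a clean specialization of Theorem \ref{thm_wolffAnalogue_maximal_general}.
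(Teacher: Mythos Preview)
Your proposal is correct and follows essentially the same approach as the paper: define $f_z(\theta)=z\cdot\gamma(\theta)$, use the torsion condition \eqref{eqn_torsion} together with compactness of $I$ to establish that $z\mapsto f_z$ is bi-Lipschitz from $\RR^3$ to $C^2(I)$, and then verify the hypotheses of Theorem \ref{thm_wolffAnalogue_maximal_general} by transferring the doubling and Frostman conditions through this bi-Lipschitz map. The only cosmetic difference is that you take $\mathcal{F}$ to be the image of $B(0,2)$ rather than $B(0,1)$, and you are slightly more explicit about absorbing the bi-Lipschitz constant into $\delta_0$ and about why $\#F=\#Z_\delta$.
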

\begin{proof}
Proposition \ref{thm_wolff_forGamma} follows from Theorem \ref{thm_wolffAnalogue_maximal_general}, applying the latter to the family of functions
\[ 
F = \bigl\{f_z: z \in Z_{\delta} \bigr\} \subset \mathcal F = \bigl\{f_z: z \in B(0;1) \bigr\}, 
\] 
where $f_z(\theta) = \gamma(\theta)\cdot z$ and $\gamma\colon I\to S^2$ is a curve satisfying \eqref{eqn_torsion}. In order for Theorem \ref{thm_wolffAnalogue_maximal_general} to be applicable, we need to verify that $\mathcal F$ is cinematic (with cinematic constant that depends only on $\gamma$) and that $F$ satisfies a Frostman-type non-concentration bound \eqref{frostmanConditionOnF}. The verification proceeds as follows. First, we have
\[ 
\sup_{f\in \mathcal F}\norm{f}_{C^2(I)} =  \sup_{z\in B(0;1)}\norm{f_z}_{C^2(I)}  \leq \sup_{\theta \in I} \bigl[ |\gamma(\theta)| + |\gamma'(\theta)| + |\gamma''(\theta)| \bigr] |z| \leq ||\gamma||_{C^2(I)} < \infty,
\]
and depends only on $\gamma$. Next, if $z_1,z_2\in B(0,1)$ then $\Vert f_{z_1} - f_{z_2}\Vert_{C^2(I)}\leq ||\gamma||_{C^2(I)} |z_1-z_2|$. Furthermore, the non-degeneracy condition \eqref{eqn_torsion} implies that
\[
\inf_{\theta\in I}\Big[|f_{z_1}(\theta)-f_{z_2}(\theta)| + |f'_{z_1}(\theta)-f'_{z_2}(\theta)| + |f''_{z_1}(\theta)-f''_{z_2}(\theta)|\Big]\gtrsim  \Vert f_{z_1} - f_{z_2}\Vert_{C^2(I)},
\]
where the implicit constant depends only on $\gamma$. As a consequence,
\begin{equation} \label{C^2-Euclidean}
\Vert f_{z_1}-f_{z_2}\Vert_{C^2(I)}\sim |z_1-z_2| \text{ for } z_1, z_2 \in B(0;1).
\end{equation}
 Thus $\mathcal{F}$ is a doubling metric space (with the same doubling constant as $\R^3$). Finally, identifying $F$ with $Z_{\delta}$, we observe from \eqref{C^2-Euclidean} that the Frostman-type non-concentration condition \eqref{frostmanConditionOnF} for $F$ follows from the analogous condition \eqref{eqn_frostmanNonConcentrationRequirementInProp} for the Euclidean set $Z_{\delta}$.
\end{proof}


We are now ready to prove Theorem \ref{thm_KaufmanRestrictedProjThm}. Let $\gamma\colon I\to S^2$ satisfy \eqref{eqn_torsion} and let $Z\subset\RR^3$ be Borel. After a harmless re-scaling, we can suppose $Z\subset B(0,1)$ and after possibly replacing $Z$ by a subset, we may suppose that $\dim Z\leq 1$. Let $0\leq s<\dim Z$. Aiming for a contradiction, suppose if possible that \eqref{eqn_boundOnDimTheta} fails, i.e.~$\dim\Theta_I > s$, where \[ \Theta_I = \bigl\{\theta \in I: \dim \bigl(Z \cdot \gamma(\theta) \bigr) < s \bigr\}. \]  Then $\Theta_I$ admits a subset $\Theta$ with the propery that $s < \dim\Theta \le \dim Z$. Set $\alpha = \dim \Theta$. We will show that the existence of such a set $\Theta \subset \Theta_I$ contradicts the validity of Proposition \ref{thm_wolff_forGamma}.

Define
\begin{equation}\label{defnEps}
\eps = \frac{\alpha-s}{2C+11} > 0,
\end{equation}
where $C$ is the constant from \eqref{eqn_L32BdGamma}.


\subsection{Initial discretization}
Bourgain \cite{Bourgain_sum_product} proved a discretized projection theorem, which bounded the number of exceptional directions where the projection of a $\delta$-discretized set can be small. Bourgain then showed how this discretized theorem implies an analogous Hausdorff dimension bound. The next set of arguments are similar to the reduction from Theorem 4 to Theorem 3 in \cite{Bourgain_sum_product}, but with a twist. The location of the twist will be noted below.

Let $\zeta =\dim Z$ and let $\mathbb{P}$ be a probability measure supported on $Z$ that satisfies the Frostman condition
\begin{equation}\label{FrostmanOnP}
\mathbb{P}(B)\lesssim r^{\zeta-\eps}\qquad\textrm{for all balls}\ B\subset\RR^3\ \textrm{of radius}\ r>0.
\end{equation}

Let $\nu$ be a probability measure supported on $\Theta$ with
\begin{equation}\label{FrostmanOnNu}
\nu(J)\lesssim |J|^{\alpha-\eps}\ \quad\textrm{for all intervals}\ J\subset\RR.
\end{equation}

Let $k_0\in\ZZ$ be chosen sufficiently large, depending on $\gamma$, $\zeta$, $s$ and $\alpha$. The choice of $k_0$ will be determined as follows: our arguments below will select a number $0<\delta\leq 2^{-\lfloor(1+\eps)^{k_0}\rfloor}$ (as in \cite[Section 7]{KatzTao}; we do not get to choose $\delta$, but our choice of $k_0$ gives an upper bound for $\delta$). Eventually, we will arrive at the conclusion that a certain inequality involving $\delta$ is impossible, provided $\delta>0$ is sufficiently small (depending on $\gamma$, $\zeta$, $s$ and $\alpha$). In order to arrive at a contradiction, we will select $k_0\in\ZZ$ sufficiently large so that any positive $\delta \leq 2^{-\lfloor(1+\eps)^{k_0}\rfloor}$ obeys Proposition \ref{thm_wolff_forGamma}.

For each $\theta\in I$, define $\rho_\theta\colon \RR^3\to\RR$ by $\rho_\theta(z)=z \cdot \gamma(\theta)$. For each $\theta\in\operatorname{supp}(\nu)\sub \Theta$, cover
$\rho_\theta(Z)$ by a union of sets $\{ G_{\theta,k} \}_{k=k_0}^\infty$, where  $G_{\theta,k}$ is a $(2^{-\lfloor(1+\eps)^{k}\rfloor},s; 2^{\eps\lfloor(1+\eps)^{k}\rfloor})_1$-set (in Definition \ref{defnDeltaAlphaCSet} a $(\delta,\alpha;C)_1$-set must be contained in $[0,1]$, but it is harmless to apply a translation); see i.e.~\cite[Lemma 7.5]{KatzTao} for details on how to do this. This is the ``twist'' alluded to above; in \cite{Bourgain_sum_product}, Bourgain simply covers $\rho_\theta(A)$ by unions of $2^{-k}$ intervals; he does not ask that these unions satisfy the non-concentration condition \eqref{nonConcentrationKatzTao}.

For each $\theta\in\operatorname{supp}(\nu)$ we have
\[
1=\mathbb{P}(Z)=\mathbb{P}\big(Z \cap \rho_\theta^{-1}(\rho_\theta(Z) \big) = \mathbb{P}\Big(Z \cap \bigcup_{k\geq k_0}\rho_\theta^{-1}(G_{\theta,k}) \Big).
\]
Thus by integrating over $I$ with respect to $\nu$  we conclude that
\[
\int_I \sum_{k\geq k_0} \mathbb{P}\big(Z \cap \rho_\theta^{-1}(G_{\theta,k}\big)\big)d\nu(\theta) \geq \nu(\Theta)= 1.
\]
In particular, there exists $k\geq k_0$ so that
\begin{equation}\label{selectionOfK}
\int_I \mathbb{P}\big(Z \cap \rho_\theta^{-1}(G_{\theta,k})\big)d\nu(\theta) \gtrsim \frac{1}{k^2}.
\end{equation}
We will fix this choice of $k$, and for the remiander of this proof, define $\delta = 2^{-\lfloor(1+\eps)^{k}\rfloor}$ and $G_{\theta}=G_{\theta,k}$; thus the right hand side of \eqref{selectionOfK} becomes essentially $\left(\frac{\log (1 + \eps)}{\log \log (1/\delta)}\right)^{2}$, which is $\geq |\log\delta|^{-2}$ if $\delta$ is sufficiently small relative to $\eps$.

Recalling the notation introduced in Section \ref{notationSection}, we will write $A\gtrapprox B$ to mean $A\gtrsim |\log\delta|^{-C}B$, where $C$ is an absolute constant (in practice, we will always have $C\leq 100$).


\subsection{Replacing $\mathbb{P}$ by a sum over curves}\label{replacePBySumOverCurvesSec}
Given the choice of $\delta$ above, let us consider a covering of $Z$ using cubes of sidelength $\delta/2$. After dyadic pigeonholing, we can find a set $\mathcal{Q}$ of such cubes and a ``weight'' $w$, so that $w \leq\mathbb{P}(Q)<2w$ for each $Q\in\mathcal{Q}$, and
\begin{equation}\label{integralUnionOfQ}
\int_I \mathbb{P}\Big(\bigcup_{Q\in\mathcal{Q}} Q \cap \rho_\theta^{-1}(G_{\theta})\Big)d\nu(\theta) \gtrapprox 1.
\end{equation}
Let $ G_{\theta}' =N_{\delta}(G_{\theta})$; here and henceforth, $N_\delta(X)$ denotes the $\delta$-neighbourhood of the set $X$. Then $G_{\theta}'$ is a $(3\delta,s; 3\delta^{-\eps})_1$-set, and if $Q\in\mathcal{Q}$ is a cube that intersects $\rho_\theta^{-1}(G_{\theta})$, then $Q\subset \rho_\theta^{-1}(G_{\theta}')$. Thus \eqref{integralUnionOfQ} implies that
\[
\int_I w\#\big\{Q \in \mathcal{Q}\colon Q\subset \rho_\theta^{-1}(G_{\theta}')\big\} d\nu(\theta)\gtrapprox 1,
\]
or
\begin{equation}\label{lowerBdOnCubesInGThetaPrime}
\int_I \#\big\{Q \in \mathcal{Q}\colon \rho_\theta(Q) \subset G_{\theta}'\big\} d\nu(\theta)\gtrapprox w^{-1}.
\end{equation}
Since $Q$ is a cube of side-length $\delta/2$ and $\gamma(\theta)\in S^2$, we have that $\rho_\theta(Q)$ is an interval in $\RR$ of length $\sim\delta$. Thus $\int_{G_{\theta}'}{\bf 1}_{\rho_\theta(Q)}(y)dy\sim\delta$ for each $Q\in\mathcal{Q}$ with $\rho_\theta(Q) \subset G_{\theta}'$. Hence \eqref{lowerBdOnCubesInGThetaPrime} implies that
\begin{equation}\label{sumOverQ}
\int_I \int_{G_{\theta}'} \sum_{Q\in\mathcal{Q}} {\bf 1}_{\rho_\theta(Q)}(y)dy d\nu(\theta)\gtrapprox \delta w^{-1}.
\end{equation}

Note that $\delta^3\lesssim w\lesssim \delta^{\zeta-\eps}$, and $1\lessapprox w\#\mathcal{Q}\leq 1$. Let $A=\delta^{\zeta-\eps}w^{-1}\gtrsim 1$, so that $\#\mathcal{Q}\gtrapprox A\delta^{-\zeta+\eps}$ and $w = \delta^{\zeta - \eps} A^{-1}$. Thus the right hand side of \eqref{sumOverQ} becomes $A\delta^{1-\zeta+\eps}$.

Let $Z_\delta'$ denotes the set of centers of the cubes in $\mathcal{Q}$; then the Frostman condition \eqref{FrostmanOnP} implies that
\begin{equation}\label{nonConcentrationOfZ0}
\#(Z_\delta' \cap B)\lessapprox w^{-1} \mathbb P(B) \lesssim w^{-1} r^{\zeta - \eps} =  A (r/\delta)^{\zeta-\eps} \qquad\textrm{for all balls}\ B\ \textrm{of radius}\ r\geq\delta.
\end{equation}
Note that if $Q\in\mathcal{Q}$ has center $z\in Z_\delta'$, then $Q\subset N_{\delta}(z)$ (this is why we required that the cubes in $\mathcal{Q}$ have side-length $\delta/2$, rather than $\delta$). Thus for each $\theta\in I$, we have the implication
\[ y\in \rho_{\theta}(Q) \implies |\gamma(\theta) \cdot z - y| \leq \delta. \]
This leads to the inequality
\begin{equation}\label{cubesDominatedByCurves}
\sum_{Q\in\mathcal{Q}} {\bf 1}_{\rho_\theta(Q)}(y)\leq \sum_{z\in Z_\delta'} {\bf 1}_{\Gamma^{\delta}_z}(\theta,y).
\end{equation}
Combining \eqref{sumOverQ} and \eqref{cubesDominatedByCurves}, we obtain
\begin{equation}\label{sumOfCurvesZ0}
\int_I \int_{G_{\theta}'} \sum_{z\in Z_\delta'} {\bf 1}_{\Gamma^{\delta}_z}(\theta,y) dy d\nu(\theta)\gtrapprox A\delta^{1-\zeta + \eps}.
\end{equation}

Let $Z_\delta$ be a subset of $Z'_{\delta}$ obtained by randomly and independently selecting each point of $Z'_{\delta}$ with probability $p = A^{-1}$; we will select the set $Z_\delta$ so that the following properties hold.

\begin{equation}\label{nonConcentrationOfBallsInZ}
\#(Z_\delta \cap B)\leq |\log \delta|A^{-1} \#(Z_\delta' \cap B) \le \delta^{-\eps} (r/\delta)^{\zeta} \qquad\textrm{for all balls}\ B\ \textrm{of radius}\ r\geq\delta,
\end{equation}
and
\begin{equation}\label{sumOverCalZ}
\int_I \int_{G_{\theta}'} \sum_{z\in Z_\delta} {\bf 1}_{\Gamma^{\delta}_z}(\theta,y) dy d\nu(\theta)\gtrapprox \delta^{1-\zeta+\eps}.
\end{equation}
Let us briefly discuss the selection of $Z_\delta$. Apriori, \eqref{nonConcentrationOfBallsInZ} appears to be an assertion about an uncountable set of balls $\{B\subset\RR^3\colon \operatorname{radius}(B)\geq \delta\}$. However, it suffices to establish \eqref{nonConcentrationOfBallsInZ} (with the RHS replaced by $\frac{1}{2}\delta^{-\eps} (r/\delta)^{\zeta}$) for a set of roughly $\delta^{-3}$ balls: For each $j=0,\dots,|\log\delta|$, let $\mathcal{B}_j$ be a set of about $(2^j\delta)^{-3}$ balls of radius $2^{j+1}\delta$, so that every ball of radius $2^j\delta$ is contained in at most $O(1)$ balls from $\mathcal{B}_j$. Then it suffices to prove \eqref{nonConcentrationOfBallsInZ} for the balls in $\mathcal{B} = \bigcup_{j=0}^{|\log\delta|}\mathcal{B}_{j}$; this set has cardinality $\lesssim\delta^{-3}$. If $\delta$ is small enough,
then using Chernoff's inequality and \eqref{nonConcentrationOfZ0} we can ensure that the probability that \eqref{nonConcentrationOfBallsInZ} fails for any particular ball in $\mathcal{B}$ is smaller than $\delta^4$. Applying the union bound over $B \in \mathcal B$, we deduce that with probability at least $1 - O(\delta)$, the estimate \eqref{nonConcentrationOfBallsInZ} holds for all balls $B \in \mathcal B$ and hence in general for all balls of radius $r \geq \delta$. Choosing such an outcome with the additional requirement (also derivable from Chernoff's inequality) that
\[ \# Z_{\delta} > \frac{1}{100A} \# Z_{\delta}', \] the property \eqref{sumOverCalZ} follows from \eqref{sumOfCurvesZ0} and an expected value calculation (expectation is linear) and Markov's inequality.


\subsection{Replacing $\nu$ by an integral over a quasi-product}
Let $\mathcal{J}_0$ be the set of intervals in $I$ (recall $I$ is the domain of $\gamma(\theta)$) of the form $[n\delta, (n+1)\delta)$, for $n\in\mathbb{Z}$. For each $J\in\mathcal{J}_0$, let $\theta(J)\in J$ be a value of $\theta$ that gets within a factor of 2 of maximizing the quantity
\[
\int_{G_{\theta}'} \sum_{z\in Z_\delta} {\bf 1}_{\Gamma^{\delta}_z}(\theta,y) dy.
\]
For each $J\in\mathcal{J}_0,$ we have
\begin{equation}\label{isolatingThetaJ}
\begin{split}
\nu(J)\int_{G_{\theta(J)}'} \sum_{z\in Z_\delta} {\bf 1}_{\Gamma^{\delta}_z}(\theta(J),y) dy & =  \int_J \int_{G_{\theta(J)}'} \sum_{z\in Z_\delta} {\bf 1}_{\Gamma^{\delta}_z}(\theta(J),y) dy d\nu(\theta)\\
&\geq \frac{1}{2} \int_J \int_{G_{\theta}'} \sum_{z\in Z_\delta} {\bf 1}_{\Gamma^{\delta}_z}(\theta,y) dy d\nu(\theta).
\end{split}
\end{equation}
The first equality is just the statement that $\nu(J)=\int_J d\nu(\theta)$, while the second inequality follows from the definition of $\Theta(J)$.
To simplify notion, define
\[
K(J)=\int_{G_{\theta(J)}'} \sum_{z\in Z_\delta} {\bf 1}_{\Gamma^{\delta}_z}(\theta(J),y) dy,
\]
so the left hand side of \eqref{isolatingThetaJ} is $\nu(J)K(J)$.
Summing \eqref{isolatingThetaJ} over $J\in\mathcal{J}_0$, we obtain
\begin{equation*}
\begin{split}
\sum_{J\in\mathcal{J}_0}\nu(J) K(J) & \geq \frac{1}{2} \sum_{J\in\mathcal{J}_0}\int_J \int_{G_{\theta}'} \sum_{z\in Z_\delta} {\bf 1}_{\Gamma^{\delta}_z}(\theta,y) dy d\nu(\theta)\\
& = \frac{1}{2}\int_I \int_{G_{\theta}'} \sum_{z\in Z_\delta} {\bf 1}_{\Gamma^{\delta}_z}(\theta,y) dy d\nu(\theta)\\
&\gtrapprox \delta^{1-\zeta+\eps}.
\end{split}
\end{equation*}
Observe that $K(J)\lesssim \delta\# Z_\delta \lessapprox \delta^{1-\zeta+\eps}$ for each $J\in\mathcal{J}_0$, and $\sum_{J\in\mathcal{J}_0}\nu(J)=1$. Thus by dyadic pigeonholing, there is a set $\mathcal{J}_1\subset\mathcal{J}_0$ and a weight $\delta\lesssim w'\leq \delta^{\alpha - \eps}$ so that $w'\leq \nu(J)<2w'$ for each $J\in\mathcal{J}_1$, and $1\lessapprox w'\#\mathcal{J}_1\leq 1$. We can write $A'=\delta^{\alpha - \eps} w'^{-1}\gtrsim 1$, and thus $\#\mathcal{J}_1\approx A'\delta^{-\alpha + \eps}$, and we have
\[
\sum_{J\in\mathcal{J}_1} \delta^{\alpha - \eps}(A')^{-1} K(J)\gtrapprox \delta^{1-\zeta+\eps},
\]
i.e.
\[
\sum_{J\in\mathcal{J}_1} K(J)\gtrapprox A'\delta^{1-\zeta-\alpha+2\eps}.
\]

We will now perform a random sampling argument analogous to how we selected $ Z_\delta\subset Z_\delta'$ at the end of Section \ref{replacePBySumOverCurvesSec}. Let $p = (A')^{-1}$, and select $\mathcal{J}\subset\mathcal{J}_1$ by randomly and independently choosing each $J\in\mathcal{J}_1$ with probability $p$. By \eqref{FrostmanOnNu}, with positive probability, we may choose $\mathcal{J}$ so that the following two properties hold.

\begin{equation}\label{nonConcentrationOfintervalsInZ}
| \Lambda \cap \bigcup_{J\in\mathcal{J}}J| \leq \delta|\log \delta|   (|\Lambda|/\delta)^{\alpha-\eps}\leq \delta^{1-\eps}(|\Lambda|/\delta)^{\alpha-\eps}\qquad\textrm{for all intervals}\ \Lambda,
\end{equation}
(i.e.~$\bigcup_{J\in\mathcal{J}''}J$ is a $(\delta,\alpha-\eps;\delta^{-\eps})_1$-set), and 
\begin{equation}\label{stillLargeMass}
\sum_{J\in\mathcal{J}} K(J)\gtrapprox \delta^{1-\zeta-\alpha+2\eps}.
\end{equation}
\eqref{nonConcentrationOfintervalsInZ} and \eqref{stillLargeMass} are the analogues of \eqref{nonConcentrationOfBallsInZ} and \eqref{sumOverCalZ}, respectively, and the argument is the same.


Next, we claim that if $(\theta,y)\in \Gamma^{\delta}_z$, and if $|\theta-\theta'|\leq \delta$, then $\theta'\in \Gamma^{L\delta}_z$, where
\[
L = \sup_{z \in Z_\delta}\sup_{\theta\in I}\left| \frac{d}{d\theta} \rho_\theta(z)\right| \lesssim 1.
\]

Thus for each $z\in Z_\delta$, each $y\in\RR$, and each $J\in\mathcal{J}$, we have
\begin{equation}\label{thickDom}
\int_J  {\bf 1}_{\Gamma^{\delta}_z}(\theta(J),y)d\theta \leq \int_J  {\bf 1}_{\Gamma^{L\delta}_z}(\theta,y)d\theta.
\end{equation}

Integrating \eqref{thickDom} over $G_{\theta(J)}'$ and summing over $z\in Z_\delta$ (and interchanging the order of integration and summation), we obtain
\begin{equation*}
\begin{split}
\delta K(J) = & \int_J \int_{G_{\theta(J)}'} \sum_{z\in Z_\delta} {\bf 1}_{\Gamma^{\delta}_z}(\theta(J),y) dyd\theta\\
& \leq \int_J \int_{G_{\theta(J)}'} \sum_{z\in Z_\delta}{\bf 1}_{\Gamma^{L\delta}_z}(\theta,y)dyd\theta.
\end{split}
\end{equation*}
Combining this with \eqref{stillLargeMass}, we conclude that
\begin{equation}\label{lowerBdSumOverJ}
\begin{split}
\delta^{2-\zeta-\alpha+2 \eps} & \lessapprox \sum_{J\in\mathcal{J}} \delta K(J)\\
& \leq \sum_{J\in\mathcal{J}''}\int_J \int_{G_{\theta(J)}'} \sum_{z\in Z_\delta}{\bf 1}_{\Gamma^{L\delta}_z}(\theta,y)dyd\theta.
\end{split}
\end{equation}
Finally, define
\[
E = \bigcup_{J\in\mathcal{J}}J\times G_{\theta(J)}'.
\]
Then $E$ is a $(\delta,\alpha-\eps;\delta^{-\eps})_1\times(3 \delta,s;3 \delta^{-\eps})_1$ quasi-product, and \eqref{lowerBdSumOverJ} can be re-written as
\begin{equation}\label{massOverF}
\int_E \sum_{z\in Z_\delta}{\bf 1}_{\Gamma^{L\delta}_z} \gtrapprox \delta^{2-\zeta-\alpha+2\eps}.
\end{equation}
Since
\[ |E|\lesssim (\delta^{1 - \alpha + \eps} \delta^{-\eps}) \times  \bigl((3 \delta)^{1 - s} 3 \delta^{-\eps}\bigr) \sim \delta^{2-\alpha-s-2\eps}, \]
the relation \eqref{massOverF} together with H\"older's inequality implies that
\begin{equation}\label{lowerBoundL32NormOverF}
\int_E \Big(\sum_{z\in Z_\delta}{\bf 1}_{\Gamma^{L\delta}_z} \Big)^{3/2} \gtrapprox \delta^{2+\frac{1}{2}s - \alpha - \frac{3}{2}\zeta+4\eps}.
\end{equation}


\subsection{Applying Proposition \ref{thm_wolff_forGamma}}
Since $E$ is a $(\delta,\alpha-\eps;\delta^{-\eps})_1\times(3\delta,s;3\delta^{-\eps})_1$ quasi-product and $s<\alpha$, $E$ can be expressed as a union of $O(1)$ many $(\delta, \alpha, \delta^{-\eps})_1 \times (\delta, \alpha, \delta^{-\eps})_1$ quasi-products.
Thus we can apply Proposition \ref{thm_wolff_forGamma} with $\# Z_\delta\lessapprox \delta^{-\zeta-\eps}$ and use the triangle inequality to conclude that
\begin{equation}\label{upperBoundL32NormOverF}
\int_E \Big(\sum_{z\in Z_\delta}{\bf 1}_{\Gamma^{L\delta}_z} \Big)^{3/2}\lessapprox \delta^{2-\frac{1}{2}\alpha-\frac{3}{2}\zeta-C\eps-\eps}.
\end{equation}
Comparing \eqref{lowerBoundL32NormOverF} and \eqref{upperBoundL32NormOverF}, we conclude that $\delta^{s-\alpha}\lessapprox \delta^{-(2C+10)\eps}$. By selecting $\delta>0$ sufficiently small (depending on $\eps$), we conclude that $\alpha-s\leq (2C+10)\eps$. But comparing with \eqref{defnEps}, we obtain a contradiction. This finishes the proof of Theorem \ref{thm_KaufmanRestrictedProjThm}.

\endproof


\section{The geometry of cinematic functions}\label{geomOfCinematicSection}
In this section, we will explore some geometric properties of families of cinematic functions. Many of the results in this section are inspired by analogous results in \cite{KOV2021, KolasaWolff, Wolff2000}. For the definitions that follow, we will fix a family of cinematic functions $\mathcal{F}$ over a compact interval $J$, with cinematic constant $K$ and doubling constant $D$.

For $f\in\mathcal{F}$, we write $f^\lambda$ to denote the closed $\lambda$-vertical neighborhood of the graph of $f$, i.e. the set
$
\{(\theta,y)\in J\times\RR\colon |f(\theta)-y|\leq\lambda\}.
$
This definition differs slightly from the definition of $f^\delta$ given in the statement of Theorem \ref{thm_wolffAnalogue_maximal_general}. This distinction does not matter, however, since $f^\lambda$ is comparable to the $\lambda$-neighborhood of the graph of $f$.

\begin{defn}[Curvilinear rectangles]\label{defn_delta_t_rectangle}
Let $0<\delta\leq 1,$ $\delta\leq t\leq 1$, and $f\in\mathcal{F}$. A $(\delta,t)$-rectangle is defined to be a set of the form
$$
f^\delta(I)=\{(\theta,y)\in I\times \RR\colon |y-f(\theta)|\le \delta\},
$$
where $I\subset J$ is an interval of length $\sqrt{\delta/t}$.
\end{defn}

\begin{defn}[Comparability]
We say two $(\delta,t)$-rectangles $R,R'$ are $\Gl$-comparable, if there is another $(\Gl\delta,t)$-rectangle $R''$ that contains $R\cup R'$. Otherwise we say $R,R'$ are $\Gl$-incomparable.
\end{defn}

\begin{defn}[Tangency]\label{defn_tangency}
We say a $(\delta,t)$-rectangle $R$ is $\Gl$-tangent to the function $f\in\mathcal{F}$ (or $f$ is $\lambda$-tangent to $R$) if $R\sub f^{\Gl\delta}$. In most cases we take $\lambda=5$, and we will simply refer to this as $R$ is tangent to $f$.
\end{defn}

\begin{defn}[Separation]\label{defn_bipartite}
Let $\mathcal W,\mathcal B\subset\mathcal{F}$. For $t>0$, we say $(\mathcal W,\mathcal B)$ is $t$-separated if $\Vert w-b\Vert \geq t$ for all $w\in \mathcal W$, $b\in \mathcal B$.
\end{defn}


\subsection{Preliminary reductions}
We start with the following lemma, which is similar to \cite[Lemma 3.1]{KOV2021}.

\begin{lem}\label{lem_KOV_3.1}
If $I\sub J$ is an interval of length $|I|\le (6K)^{-1}$, $f,g\in \mathcal F$, and $k$ is either of the functions $f-g$ or $f'-g'$, then either of the following alternatives hold (depending on the choice of $I$ and $k$):
\begin{enumerate}
    \item [(S)] $|k(\theta)|<(3K)^{-1}\Vert f-g\Vert$ for all $\theta\in I$.
    \item [(L)] $|k(\theta)|\ge (6K)^{-1} \Vert f-g\Vert$ for all $\theta\in I$.
\end{enumerate}
Moreover, if for $I$ the alternative (S) holds for both $k=f-g$ and $k=f'-g'$, then the alternative (L) must hold for $k=f''-g''$. 
\end{lem}
\begin{proof}
Let $\overline B$ be the closed unit ball of $C^2(J)$. Then if $|\theta-\theta'|<(6K)^{-1}$ and $h\in \overline B$, we have $|h(\theta)-h(\theta')|<(6K)^{-1}$ and $|h'(\theta)-h'(\theta')|<(6K)^{-1}$.

Next, fix $I\sub J$ with $|I|<(6K)^{-1}$, and suppose that the alternative (L) fails for $k=f-g$. So there is $\theta_0\in I$ such that $|k(\theta_0)|< (6K)^{-1}\norm{f-g}$. Then, if $\theta\in I$ is arbitrary, we have $|\theta-\theta_0|\le (6K)^{-1}$, and so if we define $l=k/\norm{f-g}\in \overline B(0,1)$, then
\begin{align*}
    \frac{|k(\theta)|}{\norm{f-g}}\le \frac{|k(\theta_0)|}{\norm{f-g}}+\left|\frac{k(\theta)}{\norm{f-g}}-\frac{k(\theta_0)}{\norm{f-g}}\right|<\frac 1{6K}+|l(\theta)-l(\theta_0)|<\frac 1{3K}.
\end{align*}
Thus the alternative (S) holds for $k$. The proof of the case $k=f'-g'$ is the same. The last assertion follows from the cinematic curvature assumption \eqref{cinematicFunctionCondition}.
\end{proof}
To prove Theorem \ref{thm_wolffAnalogue_maximal_general}, it suffices to prove \eqref{eqn_L32Bd_general} for each individual subinterval $I$ of length $\le (6K)^{-1}$, since the restriction of the functions in $\mathcal{F}$ to each such sub-interval $I$ is still family of cinematic functions with cinematic constant $K$ and doubling constant $D$. By an abuse of notation, we continue to write $I=J$, and assume without loss of generality that $|J|\le (6K)^{-1}$.


The following lemma is an analogue of \cite[Lemma 3.2]{KOV2021}.
\begin{lem}\label{lem_KOV_3.2}
For distinct $f,g\in \mathcal F$, the map $\theta\mapsto f(\theta)-g(\theta)$ has at most two zeros in $J$. Moreover, if $\theta\mapsto f'(\theta)-g'(\theta)$ has two zeros in $J$, then the alternative (L) holds for $\theta\mapsto f(\theta)-g(\theta)$.
\end{lem}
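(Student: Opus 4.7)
The plan is to combine the dichotomy from Lemma \ref{lem_KOV_3.1} with Rolle's theorem. Recall the reduction immediately preceding Lemma \ref{lem_KOV_3.2}: after shrinking $J$, each of the three differences $f-g$, $f'-g'$, $f''-g''$ satisfies globally on $J$ either alternative (S) or alternative (L), and since $\mathcal{F}$ is a family of cinematic functions with constant $K$ (and we applied Lemma \ref{lem_KOV_3.1} with $\kappa = K/3$), the cinematic condition \eqref{cinematicFunctionCondition} forces at least one of the three differences to satisfy (L). The key observation is that if (L) holds for a given difference $k$, then $|k(\theta)| \geq (\kappa/2)\lVert f-g\rVert > 0$ throughout $J$, so $k$ has no zeros in $J$.

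For the first part of the lemma, I will do a case analysis on which of $f-g$, $f'-g'$, $f''-g''$ satisfies (L). If (L) holds for $f-g$, then $f-g$ has no zeros in $J$. If instead (L) holds for $f'-g'$, then $f'-g'$ has no zeros in $J$, so by Rolle's theorem $f-g$ has at most one zero. Finally, if (L) holds only for $f''-g''$, then $f''-g''$ has no zeros in $J$, so by two applications of Rolle's theorem $f-g$ has at most two zeros. This gives the first conclusion.

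For the second part, suppose $\theta \mapsto f'(\theta) - g'(\theta)$ has two zeros in $J$. Then Rolle's theorem applied to $f'-g'$ produces a zero of $f''-g''$ in $J$, so alternative (L) cannot hold for $f''-g''$; by the dichotomy, (S) must hold for $f''-g''$. Also, (L) cannot hold for $f'-g'$, because that would prohibit $f'-g'$ from vanishing. Since at least one of the three differences must satisfy (L), it must be $f-g$, which is precisely the desired conclusion.

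The only potential subtlety is making sure the reduction preceding the lemma has really been absorbed, i.e.\ that we are working on a sub-interval on which the dichotomy holds globally for each of the three differences simultaneously; this has already been arranged by passing to a partition of $J$ into sub-intervals of length at most $\lambda(K/3, J)$, so no further work is needed. The proof is then just a short case analysis plus Rolle's theorem, and I do not anticipate a significant obstacle.
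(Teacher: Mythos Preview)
Your proof is correct and takes essentially the same approach as the paper: both arguments combine Rolle's theorem with the (S)/(L) dichotomy from Lemma~\ref{lem_KOV_3.1} and the fact that at least one of the three differences must satisfy (L). The only organizational difference is that the paper proves the second claim first and then deduces the first by contradiction (three zeros of $f-g$ would force two zeros of $f'-g'$, hence (L) for $f-g$, which is absurd), whereas you handle the first claim directly by case analysis on which difference satisfies (L); this is a cosmetic distinction.
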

\begin{proof}
We start with the second claim. Assume $\theta\mapsto f'(\theta)-g'(\theta)$ has two zeros in $J$. This implies, by Rolle's theorem, that $\theta\mapsto f''(\theta)-g''(\theta)$ has a zero in $J$, so in particular the alternative (L) cannot hold for $\theta\mapsto f''(\theta)-g''(\theta)$. Now Lemma \ref{lem_KOV_3.1} implies that the alternative (S) holds for $J$ and $\theta\mapsto f'(\theta)-g'(\theta)$. Thus by the last sentence of Lemma \ref{lem_KOV_3.1}, we must have that (L) holds for $\theta\mapsto f(\theta)-g(\theta)$.

The first claim follows from the second one: if $\theta\mapsto f(\theta)-g(\theta)$ had three zeros in $J$, then $\theta\mapsto f'(\theta)-g'(\theta)$ would have two zeros in $J$ again by the Rolle's theorem. But then, by the second claim, $\theta\mapsto f(\theta)-g(\theta)$ satisfies the alternative (L) in $J$, and hence cannot have zeros in $J$.
\end{proof}


\subsection{The tangency parameter $\Delta$}
Next, we shall define a quantity that measures how close two functions $f,g\in\mathcal{F}$ are to being tangent. Our definition is similar to the quantity (3.1) from \cite{KOV2021}, which in turn was inspired by \cite{KolasaWolff, Wolff2000}.
\begin{defn}
For $f,g\in \mathcal{F}$, we define
\begin{equation}
    \Delta(f,g)=\min_{\theta\in J/2}|f(\theta)-g(\theta)|+|f'(\theta)-g'(\theta)|,
\end{equation}
where $J/2$ is the interval of length $|J|/2$ with the same midpoint as $J$.
\end{defn}
In particular, we have
\begin{equation}\label{eqn_delta_bounded_by_d}
    \Delta(f,g)\leq \Vert f-g\Vert.
\end{equation}
$\Delta(f,g)$ measures the tangency between the graphs of functions $f,g$ over the interval $J/2$. If $\Delta(f,g)=0$, then $f$ and $g$ are tangent at some point $\theta\in J/2$. Note that $\Delta(f,g)$ is a pseudo-metric, since it is symmetric and satisfies the triangle inequality. The restriction to $J/2$ but not $J$ is needed for technical reasons, as is also the case of \cite{KolasaWolff} and \cite{KOV2021}.

The next proposition is a close cousin of \cite[Lemma 3.3]{KolasaWolff}. It describes the key geometric information about cinematic functions.

\begin{lem}\label{prop_two_zeros}
Let $f,g\in \mathcal F$, and define $h=f-g$, $t= \Vert f-g\Vert$, and $\Delta=\Delta(f,g)$. 
\begin{enumerate}
    \item\label{item_1} There exists $c_1=c_1(K)>0$ so that if $\Delta\le c_1t$, then the following are true.
         \begin{enumerate}
         \item\label{item_1a} There is a unique $\theta_0\in \frac {3} 5 J$ such that $h'(\theta_0)=0$. We also have $|h(\theta_0)|\lesssim \Delta$.

         \item \label{item_1b}We have $|h(\theta)|\gtrsim t>0$ for every $\theta\in J\bs (\frac {4} 5 J)$. If in addition $h$ changes sign in $J$, then $h$ has exactly 2 zeros $\theta_1<\theta_2$, both within $\frac {4} 5 J$, such that $|\theta_i-\theta_0|\lesssim \sqrt{\Delta/t}$, $i=1,2$.
\end{enumerate}

    \item \label{item_2}For each $0<\delta\le t/(6K)$, the set
    \begin{equation}\label{eqn_defn_E_delta}
        E_\delta=\{\theta\in J/4:|h(\theta)|\le\delta\}
    \end{equation}
    satisfies the following:
    \begin{enumerate}
       \item \label{item_2a} $E_\delta$ is contained in an interval of length $O(\sqrt{(\Delta+\delta)/t})$.
       \item \label{item_2b} $E_\delta$ is either a closed interval of length $O(\delta/\sqrt{(\Delta+\delta)t})$ or the union of two closed intervals of length $O(\delta/\sqrt{(\Delta+\delta)t})$.

        \item \label{item_2c}If in addition there is some $\tilde\theta\in J/4$ such that $|h(\tilde\theta)|\le \delta/2$, and $I$ is the interval constituting $E_\delta$ that contains $\tilde \theta$, then $|I|\gtrsim \delta/\sqrt{(\Delta+\delta)t}$.
    \end{enumerate}

\end{enumerate}
\end{lem}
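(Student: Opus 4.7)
The plan is to exploit the dichotomy from Lemmas \ref{lem_KOV_3.1}--\ref{lem_KOV_3.2} together with the preliminary reductions: each of $h=f-g$, $h'$, $h''$ satisfies on $J$ exactly one of alternatives (S) (pointwise $<\kappa t$) or (L) (pointwise $\geq \kappa t/2$), with $\kappa=K/3$, and at least one satisfies (L). The argument then splits into three cases. If $h$ satisfies (L), then $|h|\geq \kappa t/2>\delta$ (using $\delta\leq t/(6K)$), $E_\delta$ is empty, and every claim is vacuous. If only $h'$ satisfies (L), then $h$ is strictly monotonic with $|h'|\geq \kappa t/2$ on $J$, so $E_\delta$ is a single interval of length at most $4\delta/(\kappa t)$, which matches $\delta/\sqrt{(\Delta+\delta)t}$ because $\Delta\geq \kappa t/2$ in this case. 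The substantive case, the focus of what follows, is when only $h''$ satisfies (L): $|h''|\in[\kappa t/2,t]$ on $J$, and after possibly replacing $h$ by $-h$, $h$ is strictly convex with $h'$ strictly increasing.

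For Part (1), the hypothesis $\Delta\leq c_1 t$ with $c_1<\kappa/2$ automatically rules out the other two subcases. Picking $\theta^*\in J/2$ attaining the minimum in the definition of $\Delta$, we have $|h'(\theta^*)|\leq \Delta\leq c_1 t$, so monotonicity of $h'$ together with $|h''|\geq \kappa t/2$ produces a unique zero $\theta_0$ of $h'$ with $|\theta_0-\theta^*|\leq 2|h'(\theta^*)|/(\kappa t)\leq 2c_1/\kappa$, which lies in $(3/5)J$ once $c_1$ is small in terms of $\kappa$ and $|J|$. Integrating $h'$ between $\theta^*$ and $\theta_0$ (with $|h'|\leq\Delta$ on that subinterval) gives $|h(\theta_0)-h(\theta^*)|\leq 2\Delta^2/(\kappa t)$, hence $|h(\theta_0)|\lesssim \Delta$. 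For (1b), the Taylor expansion $h(\theta)=h(\theta_0)+\tfrac12 h''(\xi)(\theta-\theta_0)^2$ with $|h''|\geq \kappa t/2$ yields $|h(\theta)|\geq (\kappa t/4)|\theta-\theta_0|^2-|h(\theta_0)|\gtrsim t$ whenever $|\theta-\theta_0|\geq |J|/10$, i.e., on $J\setminus(4/5)J$. If $h$ changes sign then $h(\theta_0)<0$, and the two zeros solve $(\theta_i-\theta_0)^2=-2h(\theta_0)/h''(\xi_i)$, giving $|\theta_i-\theta_0|\asymp\sqrt{|h(\theta_0)|/t}\lesssim \sqrt{\Delta/t}$, which stays inside $(4/5)J$ for $c_1$ small enough.

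For Part (2) in the main case, the claim that $E_\delta$ has at most two components follows from convexity, since $\{h\leq \delta\}\cap J$ is a single interval and $\{h\geq -\delta\}\cap J$ is the complement of an interval. Solving $h(\theta_0)+\tfrac12 h''(\xi)(\theta-\theta_0)^2=\pm\delta$ for the endpoints of each component, and using the identity $\sqrt{a+\delta}-\sqrt{a-\delta}=2\delta/(\sqrt{a+\delta}+\sqrt{a-\delta})\leq 2\delta/\sqrt{a+\delta}$ together with $|h''|\sim t$, shows that each component has length $\lesssim \delta/\sqrt{(a+\delta)t}$ (the case $a>\delta$, two components) or $\lesssim \sqrt{(a+\delta)/t}\leq 2\delta/\sqrt{(a+\delta)t}$ (the case $a\leq \delta$, one component), where $a:=|h(\theta_0)|$; the same Taylor computation yields the containing-interval length $\lesssim \sqrt{(a+\delta)/t}$ for (2a). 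Combined with the identification $a\asymp \Delta$ (obtained by strengthening part (1) to $\theta_0\in J/2$ via a more restrictive choice of $c_1$, so that $\Delta\leq |h(\theta_0)|+|h'(\theta_0)|=a$), this yields the bounds of (2a) and (2b). For (2c), write $h(\tilde\theta+r)=h(\tilde\theta)+h'(\tilde\theta)r+O(tr^2)$ and choose $r\asymp \delta/\sqrt{(\Delta+\delta)t}$; one verifies $|h'(\tilde\theta)|^2\lesssim (\Delta+\delta)t$ from the quadratic estimate and $|h(\tilde\theta)|\leq \delta/2$, so $|h'(\tilde\theta)|\cdot r\lesssim \delta$ and $tr^2\lesssim \delta$, placing $\tilde\theta\pm r$ in the component of $E_\delta$ containing $\tilde\theta$.

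The principal technical obstacle is verifying the \emph{lower} bound $a\gtrsim \Delta$; the upper bound $a\lesssim \Delta$ is the content of (1a). This requires placing $\theta_0$ in $J/2$ rather than only in $(3/5)J$, which can be arranged either by choosing $c_1$ small in terms of $|J|$ so that the shift $|\theta_0-\theta^*|\leq 2c_1/\kappa$ stays well within $J/2$, or by handling the boundary regime $\theta_0\in(3/5)J\setminus J/2$ separately (there every $\theta\in E_\delta\cap J/4$ lies at distance at least $|J|/8$ from $\theta_0$, so a direct MVT estimate $|I|\leq 2\delta/\min_I|h'|$ with $\min_I|h'|\gtrsim t|J|$ already dominates the claimed bound). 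Managing constants across the three (L)-cases, and within the main case across the subregimes $a\lessgtr \delta$, is the remaining source of bookkeeping complexity.
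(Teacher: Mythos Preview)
Your overall strategy matches the paper's: use the (S)/(L) dichotomy to reduce to the convex case $|h''|\sim t$ and then a Taylor analysis around the critical point $\theta_0$. Part (1) is handled correctly and essentially as in the paper.

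For Part (2), your organization by (L)-cases differs slightly from the paper's, which instead splits directly on $\Delta\lessgtr c_1 t$. This creates a gap. In your main case (iii) you invoke $\theta_0$ and the identification $a=|h(\theta_0)|\asymp\Delta$, but both come from Part (1a), which carries the hypothesis $\Delta\le c_1 t$ that Part (2) does \emph{not} assume. Case (iii) alone gives only $|h'|<\kappa t$ on $J$; this forces neither $\Delta\le c_1 t$ nor $\theta_0\in(3/5)J$ (nor even $\theta_0\in J$). Your boundary-regime patch covers $\theta_0\in(3/5)J\setminus J/2$ but not $\theta_0\notin(3/5)J$. The paper handles the regime $\Delta>c_1 t$ cleanly by observing that $E_\delta\subset J/4\subset J/2$ together with the definition of $\Delta$ gives $|h'(\theta)|\ge\Delta-|h(\theta)|\ge\Delta-\delta\gtrsim t$ on $E_\delta$, after which a direct mean-value estimate finishes (2b) and (2c).

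There is also a technical flaw in your ``solve and subtract'' length bound for (2b) in the two-component case. Writing $(\theta-\theta_0)^2=2(a\pm\delta)/h''(\xi)$ for the endpoints introduces \emph{different} mean-value points $\xi_1,\xi_2$, so $\beta_1-\beta_2$ is not controlled by $\sqrt{a+\delta}-\sqrt{a-\delta}$ alone; when $a\gg\delta$ the discrepancy between $h''(\xi_1)$ and $h''(\xi_2)$ can make the difference as large as $\sqrt{a/t}$. The paper avoids this by a further split: when $\delta\gtrsim\Delta$ it simply invokes (2a), while when $\delta<c_1\Delta$ it first shows $|\theta-\theta_0|\gtrsim\sqrt{\Delta/t}$ on $E_\delta$ (from the lower bound $|h(\theta_0)|\ge\Delta$, which requires $\theta_0\in J/2$), hence $|h'|\gtrsim\sqrt{t\Delta}$ there, and then applies the mean-value theorem to get each component of length $\lesssim\delta/\sqrt{t\Delta}$.
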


\begin{proof}
Let $\theta_\Delta\in J/2$ be such that
\begin{equation*}
    \Delta=|h(\theta_\Delta)|+|h'(\theta_\Delta)|.
\end{equation*}
Thus $|h(\theta_\Delta)|\le \Delta$ and $|h'(\theta_\Delta)|\le \Delta$.

We first prove Part \eqref{item_1}. If we choose $c_1<(3K)^{-1}$, then using the definition of $\Delta(f,g)$, we have the alternative (S) holds for both $h,h'$. Thus the alternative (L) must hold for $h''$. Without loss of generality we may assume $h''(\theta)\gtrsim t$.

For Part \eqref{item_1a}, using $h''(\theta)\gtrsim t$, if $c_1$ is chosen to be small enough, then there is a unique $\theta_0\in \big(\frac 1 2+O(\Delta/t)\big)J\sub \frac{3} 5 J$ with $h'(\theta_0)=0$. Also, $|\theta_0-\theta_\Delta|\lesssim \Delta/t$, and thus
\begin{align*}
    |h(\theta_0)-h(\theta_\Delta)|\lesssim t|\theta_0-\theta_\Delta|\lesssim \Delta.
\end{align*}
Since $|h(\theta_\Delta)|\leq \Delta$, we also have $|h(\theta_0)|\lesssim \Delta$.

For Part \eqref{item_1b}, write $J=[a,b]$. Using $|h(\theta_0)|\lesssim \Delta$, for any $\theta\in [\theta_0,b]$ we have
\begin{align}
    |h(\theta)|
    &\ge \left|\int_{\theta_0}^{\theta} h'(s)ds\right|-C\Delta=\int_{\theta_0}^{\theta} \left|h'(s)\right|ds-C\Delta\nonumber\\
    &=\int_{\theta_0}^\theta \int_{\theta_0}^s h''(u)duds-C\Delta\geq ct(\theta-\theta_0)^2-C\Delta.\label{eqn_f_theta}
\end{align}
Here $C,c$ are absolute constants. The same is true for $\theta\in [a,\theta_0]$. If $c_1$ is small enough, we then have $|h(\theta)|\gtrsim t$ for every $\theta\in J\bs (\frac 4 5 J)$, since $\theta_0\in \frac 3 5 J$ and $|J|\sim 1$. The statements about the zeros follow from the intermediate value theorem and a similar application of \eqref{eqn_f_theta}.

Now we come to Part \eqref{item_2}. First, for Part \eqref{item_2a}, if $\Delta>c_1t $ then the result is trivial. If $\Delta\le c_1t$, then the argument of Part \eqref{item_1} applies. Let $\alpha=\inf E_\delta$ and $\beta=\sup E_\delta$, and hence $E_\delta\sub [\Ga,\beta]$. Using \eqref{eqn_f_theta} we have $\beta-\theta_0\le C\sqrt{(\Delta+\delta)/t}$ and $\theta_0-\Ga\le C\sqrt{(\Delta+\delta)/t}$. Thus the result follows.

For Part \eqref{item_2b}, we first note that the alternative (L) cannot hold for $h$, otherwise it will contradict the assumption $\delta<t/(6K)$. Thus (L) holds for $h'$ or $h''$. In either case, we see $E_\delta$ is a closed interval or the union of two closed intervals.

For the length estimate, we first consider the easier case $\Delta>c_1t$. In this case, if $\delta>c_1t/2$, then the result is trivial since then $\delta\sim \Delta\sim t$. If $\delta\le c_1t/2$, then for $\theta\in E_\delta$ we have
\begin{equation*}
    c_1t<\Delta\le \min_{\theta\in E_\delta}\big(|h(\theta)|+|h'(\theta)|\big)\le \delta+\min_{\theta\in E_\delta}|h'(\theta)|,
\end{equation*}
and thus $|h'(\theta)|\gtrsim t$ on $E_\delta$, and thus each interval constituting $E_\delta$ has length $\lesssim\delta/t=\delta/\sqrt{(\Delta+\delta)t}$.

For the harder case $\Delta\le c_1t$, the argument of Part \eqref{item_1} is applicable. We also have two subcases. First, if $\delta\ge c_1\Delta$, then the result follows from Part \eqref{item_2a}, since then $\sqrt{(\Delta+\delta)/t}\sim \delta/\sqrt{(\Delta+\delta)t}$.

If $\delta<c_1\Delta$, then we may further assume $\theta_0\in J/2$. Otherwise, using \eqref{eqn_f_theta} we have $h(\theta)\gtrsim t$ for every $\theta\in J/4$ if $c_1$ is small enough. In particular, $E_\delta=\varnothing$ and we have nothing to prove. Thus we have $\theta_0\in J/2$.

But by the definition of $\Delta$ and $\theta_0$, we have $|h(\theta_0)|=|h(\theta_0)|+|h'(\theta_0)|\ge \Delta$. Thus for every $\theta\in E_\delta\cap [\theta_0,b]$, we have
\begin{align*}
    \delta
    &\ge |h(\theta)|\ge \Delta-\int_{\theta_0}^\theta |h'(s)|ds\\
    &\ge \Delta-\int_{\theta_0}^\theta\int_{\theta_0}^s |h''(u)|duds\ge \Delta-C't (\theta-\theta_0)^2.
\end{align*}
Here $C'$ is an absolute constant. For $\theta\in E_\delta\cap [a,\theta_0]$ the argument is the same. This implies that
\begin{equation}
    |\theta-\theta_0|\gtrsim \sqrt {\Delta/t},\quad \forall \theta\in E_\delta.
\end{equation}
Since $h''(\theta)\gtrsim t$ in $J$, this further implies that for $\theta\in E_\delta$
\begin{equation}
    |h'(\theta)|=|h'(\theta)-h'(\theta_0)|\gtrsim t|\theta-\theta_0|\gtrsim \sqrt{t\Delta}.
\end{equation}
Thus each interval constituting $E_\delta$ has length $O(\delta/\sqrt{t\Delta})=O(\delta/\sqrt{(\Delta+\delta)t})$.

Now we come to Part \eqref{item_2c}. Let $I$ be the interval constituting $E_\delta$ that contains $\tilde\theta$. If $I=J/4$ then the result is trivial, so we assume $I\ne J/4$. In this case, there exists an endpoint $\theta_I$ of $I$ so that $|h(\theta_I)|=\delta$.

If $\Delta>c_1 t$, then we have
\begin{equation*}
    \delta/2\le |h(\theta_I)|-|h(\tilde \theta)|\lesssim |\theta_I-\tilde \theta|t,
\end{equation*}
and so $|\theta_I-\tilde \theta|\gtrsim \delta/t\sim \delta/\sqrt{(\Delta+\delta)t}$.

If $\Delta\le c_1 t$, then the argument of Part \eqref{item_1} is applicable. For every $\theta\in E_\delta$, the argument of Part \eqref{item_2a} implies that $|\theta-\theta_0|\lesssim \sqrt{(\Delta+\delta)/t}$. Thus
$$
|h'(\theta)|=|h'(\theta)-h'(\theta_0)|\lesssim t|\theta-\theta_0|\lesssim \sqrt{(\Delta+\delta)t},\quad \forall \theta\in E_\delta.
$$
Hence
\begin{equation*}
    \delta/2\le |h(\theta_I)|-|h(\tilde \theta)|\lesssim |\theta_I-\tilde \theta|\sqrt{(\Delta+\delta)t},
\end{equation*}
which implies that $|\theta_I-\tilde \theta|\gtrsim \delta/\sqrt{(\Delta+\delta)t}$. In particular, $|I|\gtrsim \delta/\sqrt{(\Delta+\delta)t}$.
\end{proof}


\subsection{Further geometric estimates}
We conclude this section with some simple arguments that relate the quantities $\Vert f-g\Vert$, $\Delta(f,g)$, the volume of $f^\delta\cap g^\delta$, and the number of tangency rectangles associated to the intersection. 

\begin{lem}\label{prop_tangent1}
Let $f,g\in \mathcal F$ and let $t>0$. If there is a $(\delta,t)$-rectangle $R$ over the interval $J/4$ (i.e. the interval $I$ from Definition \ref{defn_delta_t_rectangle} is contained in $J/4$) that is tangent to both $f$ and $g$, then $(\Delta(f,g)+\delta)\Vert f-g\Vert\lesssim\delta t$.
\end{lem}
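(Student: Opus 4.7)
The plan is to reduce the statement to a direct application of Lemma \ref{prop_two_zeros}, part (2b). The starting observation is that tangency is very restrictive on the base interval $I$ of the rectangle. Indeed, if $R = f_R^{\delta}(I)$ for some $f_R \in \mathcal{F}$ and $R \subset f^{5\delta}\cap g^{5\delta}$, then testing the inclusion on the center curve $(\theta, f_R(\theta))$, $\theta\in I$, gives $|f(\theta)-f_R(\theta)|\le 5\delta$ and $|g(\theta)-f_R(\theta)|\le 5\delta$, so by the triangle inequality the function $h=f-g$ satisfies $|h(\theta)|\le 10\delta$ throughout $I$. Since by hypothesis $I\subset J/4$, this means $I\subset E_{10\delta}$, where $E_{10\delta}$ is the sublevel set defined in \eqref{eqn_defn_E_delta} (with $\delta$ there equal to $10\delta$).

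With this in hand, I would split into two cases based on how $\|f-g\|$ compares to $\delta$. Write $\tau = \|f-g\|$ and $\Delta = \Delta(f,g)$. If $\tau \le 60K\delta$, then $\Delta\le \tau\lesssim \delta$ and so $(\Delta+10\delta)\tau \lesssim \delta^2 \le \delta t$, using the convention $\delta\le t$ from Definition \ref{defn_delta_t_rectangle}. This case is trivial.

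In the complementary case $\tau > 60K\delta$, one has $10\delta < \tau/(6K)$, so the hypothesis of Lemma \ref{prop_two_zeros}(2) is satisfied with the parameter $\delta$ of that lemma replaced by $10\delta$. Part (2b) then tells us that $E_{10\delta}$ is either a single interval or a disjoint union of two intervals, each of length $O\!\left(\delta/\sqrt{(\Delta+10\delta)\,\tau}\right)$. Since $I$ is a connected subset of $E_{10\delta}$, it must lie entirely inside one of the constituent intervals. Comparing lengths,
\[
\sqrt{\delta/t}\;=\;|I|\;\lesssim\;\frac{\delta}{\sqrt{(\Delta+10\delta)\,\tau}}.
\]
Squaring and rearranging yields $(\Delta+10\delta)\,\tau \lesssim \delta t$, which is the desired bound.

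The whole argument is essentially bookkeeping; the only substantive point is recognizing that the rectangle's tangency translates to a containment $I\subset E_{10\delta}$ and that Lemma \ref{prop_two_zeros}(2b), rather than (2a), is the right tool---(2a) bounds the convex hull of $E_{10\delta}$ and would give the inequality with the wrong direction, whereas (2b) constrains each connected component, which is what is needed when the rectangle's base is a \emph{single} interval. No step looks like a real obstacle; the main care is ensuring that the hypothesis $\delta\le \tau/(6K)$ in Lemma \ref{prop_two_zeros}(2) is handled cleanly by separating off the degenerate regime $\tau\lesssim \delta$.
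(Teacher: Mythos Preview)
Your proof is correct and follows essentially the same approach as the paper: deduce $|f-g|\le 10\delta$ on $I$ from the tangency condition, then apply Lemma~\ref{prop_two_zeros}\ref{item_2b} to bound $|I|$ and rearrange. You are in fact slightly more careful than the paper, which omits the explicit case split needed to verify the hypothesis $10\delta \le \Vert f-g\Vert/(6K)$ of Lemma~\ref{prop_two_zeros}(2).
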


\begin{proof}
Let $R$ be a $(\delta,t)$-rectangle over an interval $I\sub J/4$. By Definition \ref{defn_tangency} and the triangle inequality, for each $\theta\in I$ we have $|f(\theta)-g(\theta)|\leq 10\lambda \delta$. Since $I\sub J/4$, by Part \eqref{item_2b} of Lemma \ref{prop_two_zeros}, we have
\[
\sqrt{\delta/t}=|I|\lesssim\frac{\delta}{\sqrt{(\Delta(f,g)+\delta)\Vert f-g\Vert}}.\qedhere
\]
\end{proof}

\begin{lem}\label{prop_roughly_const}
Let $I\sub J/4$ be an interval, let $\lambda\geq 1$, let $f,g\in \mathcal F,$ and let $0<\delta\le \Vert f-g\Vert/(6K)$. If $|f(\theta)-g(\theta)|\leq\delta$ on $I$, then $|f(\theta)-g(\theta)|\lesssim \lambda^2\delta$ on $\lambda I\cap J$.
\end{lem}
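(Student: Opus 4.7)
The plan is to set $h = f-g$, $t = \Vert f-g\Vert$, and $\Delta = \Delta(f,g)$, and to combine the geometric information about the sublevel set $E_\delta$ from Lemma \ref{prop_two_zeros} with a short Taylor-expansion argument. The key observation is that since $|h(\theta)|\le\delta$ on $I\subseteq J/4$, the entire interval $I$ lies inside the set $E_\delta$ of \eqref{eqn_defn_E_delta}. Since $\delta\le t/(6K)$, Part \ref{item_2b} of Lemma \ref{prop_two_zeros} applies: $E_\delta$ is a union of at most two closed intervals, each of length $O\bigl(\delta/\sqrt{(\Delta+\delta)t}\bigr)$. As $I$ is connected, it must be contained in one of these components, so
\[
|I|\;\lesssim\;\frac{\delta}{\sqrt{(\Delta+\delta)t}},\qquad\text{hence}\qquad t|I|^2\;\lesssim\;\frac{\delta^2}{\Delta+\delta}\;\le\;\delta.
\]
This bound $t|I|^2\lesssim\delta$ is the main engine of the proof.

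Next, since $\Vert h\Vert_{C^2(J)}=t$, we have $|h''|\le t$ on $J$. I would fix some $\theta^*\in I$ and use the mean value theorem on $h$ restricted to $I$: since $|h(\xi_1)-h(\xi_2)|\le 2\delta$ for any $\xi_1,\xi_2\in I$, there is $\theta^{**}\in I$ with $|h'(\theta^{**})|\le 2\delta/|I|$. Then for our fixed $\theta^*$,
\[
|h'(\theta^*)|\;\le\;|h'(\theta^{**})|+\sup_J|h''|\cdot|\theta^*-\theta^{**}|\;\le\;\frac{2\delta}{|I|}+t|I|.
\]

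Finally, for any $\theta\in\lambda I\cap J$ we have $|\theta-\theta^*|\le\lambda|I|$, so Taylor expansion about $\theta^*$ yields
\[
|h(\theta)|\;\le\;|h(\theta^*)|+|h'(\theta^*)|\cdot|\theta-\theta^*|+\tfrac{1}{2}\sup_J|h''|\cdot|\theta-\theta^*|^2\;\le\;\delta+\Bigl(\frac{2\delta}{|I|}+t|I|\Bigr)\lambda|I|+\tfrac{1}{2}t\lambda^2|I|^2.
\]
The right hand side simplifies to $\lesssim\lambda\delta+\lambda^2 t|I|^2$, which by the first step is $\lesssim\lambda\delta+\lambda^2\delta\lesssim\lambda^2\delta$, as desired.

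The only nontrivial step is the first one, where the containment $I\subseteq E_\delta$ together with the two-interval structure from Lemma \ref{prop_two_zeros} forces the sharp upper bound on $|I|$; everything after that is a routine Taylor estimate using the uniform $C^2$ bound $|h''|\le t$.
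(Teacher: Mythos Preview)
Your proof is correct and follows essentially the same approach as the paper's: both arguments extract the crucial length bound $|I|\lesssim\delta/\sqrt{(\Delta+\delta)t}$ (equivalently $t|I|^2\lesssim\delta$) from Part~\ref{item_2b} of Lemma~\ref{prop_two_zeros}, then control $|h'|$ near $I$ and integrate. The only cosmetic difference is that the paper bounds $|h'|$ on all of $\lambda I\cap J$ via a short contradiction argument, whereas you bound $|h'(\theta^*)|$ at one point by the mean value theorem and then carry the second-order term explicitly in the Taylor expansion; the two routes are interchangeable.
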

\begin{proof}
Let $k = f-g$, let $t=\Vert f-g\Vert$, and let $\Delta=\Delta(f,g)$. Since $I\sub J/4$, by Part \eqref{item_2b} of Lemma \ref{prop_two_zeros} we have $|I|\lesssim \delta /\sqrt{(\Delta+\delta)t}\le \sqrt{\delta/t}$.

We will first show that $|k'(\theta)|\le C\lambda \delta|I|^{-1}$ on $\lambda I\cap J$, for some $C\lesssim 1$. Suppose not. Then there is some $\theta_1\in \lambda I\cap J$ with $|k'(\theta_1)|>C\lambda \delta|I|^{-1}$. We have $|k''(\theta)|\leq t$ in $J$. Since $|I|\lesssim \sqrt{\delta/t}$, for any $\theta\in \lambda I\cap J$ we have
$$
|k'(\theta)|\geq C\lambda \delta|I|^{-1}-\lambda t|I|\ge C\lambda \delta|I|^{-1}/2,
$$
provided $C\sim 1$ is chosen appropriately. But since $k\in C^2(I)$ and $\lambda>1$, if $C>4$ then it is impossible for $|k'(\theta)|\ge C\lambda \delta|I|^{-1}/2$ and $|k(\theta)|\leq\delta$ to simultaneously hold for all $\theta\in I$. Hence $|k'(\theta)|\lesssim \lambda \delta|I|^{-1}$ on $\lambda I\cap J$. By the triangle inequality, we thus have $|k(\theta)|\lesssim \lambda^2\delta$ on $\lambda I\cap J$.
\end{proof}

\begin{defn}
For $R=f^\delta(I)$ where $I\sub J$, we define the $\lambda$-dilation of $R$ to be
\begin{equation}\label{eqn_dilation_rectangle}
    \lambda R=f^{\lambda\delta}(\lambda I\cap J).
\end{equation}
\end{defn}

We will record several technical lemmas related to comparability.
\begin{lem}\label{prop_comparable_rectangle}
Let $f,g\in \mathcal F$ and $\lambda\ge 1$. Let $R=f^\delta(I)$ and $R'=g^\delta(I')$ be two $\lambda$-comparable $(\delta,t)$-rectangles over $J/4$. Let $I''$ be the convex hull of $I\cup I'$. Then $|I''|\leq  \sqrt{\lambda\delta/t}$, and
\begin{equation}\label{fVsg}
|f(\theta)-g(\theta)|\lesssim \lambda^3\delta \quad \textrm{for all}\ \theta\in I''.
\end{equation}
In particular, $R'\sub C\lambda R$ for some $C\sim 1$.
\end{lem}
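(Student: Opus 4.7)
My approach is to unpack the $\lambda$-comparability hypothesis by producing a witness function $h\in\mathcal F$ and host interval $I_0''\subseteq J$ of length $\sqrt{\lambda\delta/t}$ for which the $(\lambda\delta,t)$-rectangle $R''=h^{\lambda\delta}(I_0'')$ contains both $R$ and $R'$. The containments $R,R'\subseteq R''$ will directly yield pointwise estimates $|f-h|\leq\lambda\delta$ on $I$ and $|g-h|\leq\lambda\delta$ on $I'$; I then plan to propagate these estimates to all of $I''$ using Lemma \ref{prop_roughly_const}, after which \eqref{fVsg} will follow by the triangle inequality.

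Concretely, once the witness rectangle $R''$ is fixed, the projection of $R=f^\delta(I)\subseteq R''$ onto the $\theta$-axis will give $I\subseteq I_0''$, and evaluating the inclusion on the graph of $f$ will give $|f(\theta)-h(\theta)|\leq\lambda\delta$ for $\theta\in I$; the symmetric statements will hold for $g$ on $I'$. The first conclusion $|I''|\leq\sqrt{\lambda\delta/t}$ will then be immediate from $I''\subseteq I_0''$. Since $|I_0''|=\sqrt\lambda\,|I|$ and $I\subseteq I_0''$, the interval $I_0''$ is contained in the dilate of $I$ by a factor $\mu:=2\sqrt\lambda$ about its midpoint, and I would apply Lemma \ref{prop_roughly_const} to the pair $(f,h)$ at scale $\lambda\delta$ with this dilation $\mu$: in the main regime where the quantitative hypothesis $\lambda\delta\leq\|f-h\|/(6K)$ holds, the lemma gives $|f-h|\lesssim\mu^2\lambda\delta\lesssim\lambda^2\delta$ on $I_0''\cap J$, and in the degenerate regime $\|f-h\|<6K\lambda\delta$ the trivial bound $|f-h|\leq\|f-h\|\lesssim\lambda\delta$ suffices on all of $J$. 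The same argument would yield $|g-h|\lesssim\lambda^2\delta$ on $I''$, and the triangle inequality would then give $|f-g|\lesssim\lambda^2\delta\leq\lambda^3\delta$ on $I''$, establishing \eqref{fVsg}.

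For the ``in particular'' clause, any $(\theta,y)\in R'$ will satisfy $\theta\in I'\subseteq I''\subseteq C\lambda I\cap J$ (using $\mu=2\sqrt\lambda\leq C\lambda$ for $\lambda\geq 1$) and
\[
|y-f(\theta)|\leq|y-g(\theta)|+|f(\theta)-g(\theta)|\leq\delta+O(\lambda^3\delta)\leq C\lambda\delta,
\]
placing $(\theta,y)$ inside $f^{C\lambda\delta}(C\lambda I\cap J)=C\lambda R$, with the constant $C$ absorbing the polynomial loss between $\lambda$ and $\lambda^3$. The main obstacle will be the case split in the Lemma \ref{prop_roughly_const} step, where one has to handle the regime in which the lemma's quantitative hypothesis fails separately; the remainder of the argument is bookkeeping from the definitions of comparability and dilation.
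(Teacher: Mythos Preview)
Your proposal is correct and follows the same overall strategy as the paper: extract the witness $(\lambda\delta,t)$-rectangle $R''=h^{\lambda\delta}(I_0)$ from the comparability hypothesis, read off $I''\subseteq I_0$, and then use Lemma~\ref{prop_roughly_const} to propagate the pointwise bound to all of $I''$. The one genuine difference is that you route the argument through the witness function $h$, applying Lemma~\ref{prop_roughly_const} separately to the pairs $(f,h)$ and $(g,h)$, whereas the paper applies the lemma once directly to $(f,g)$ after asserting $|f(\theta)-g(\theta)|\leq 2\lambda\delta$ for all $\theta\in I$. Your route is actually cleaner: the paper's intermediate claim is not entirely obvious as stated, since the containment $R'\subseteq R''$ only directly yields $|g-h|\leq\lambda\delta$ on $I'$, not on $I$. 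By working with $(f,h)$ on $I$ and $(g,h)$ on $I'$ you sidestep this, and as a bonus you obtain the slightly sharper bound $|f-g|\lesssim\lambda^2\delta$ on $I''$. Your observation that the ``in particular'' clause requires absorbing a factor of $\lambda^2$ into $C$ is correct; the paper's phrasing ``$C\sim 1$'' is imprecise for the same reason, but this is harmless since in every application of the lemma $\lambda$ is a fixed constant depending only on $K$.
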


\begin{proof}
By definition of comparability, there is some $(\lambda\delta,t)$-rectangle $R''=h^{\lambda\delta}(I_0)$ such that $R\cup R'\sub R''$ and $|I_0|= \sqrt{\lambda\delta/t}$. Since $I''\subset I_0$, we have $|I''|\leq \sqrt{\lambda\delta/t}$.

It remains to prove \eqref{fVsg}. If $\Vert f-g\Vert<12K\lambda \delta$ then \eqref{fVsg} is immediate and we are done.

Next, suppose $\Vert f-g\Vert\geq 12K\lambda\delta$. We have
$$
|f(\theta)-h(\theta)|\leq \lambda\delta \quad \textrm{for all}\ \theta\in I.
$$
Using Lemma \ref{prop_roughly_const} (with $2\lambda\delta$ in place of $\delta$; note that  $2\lambda\delta\leq \Vert f-g\Vert/(6K)$), we have
\[
|f(\theta)-h(\theta)|\lesssim \lambda^3\delta \quad \textrm{for all}\ \theta\in I''.
\]
The same is true with $f$ replaced by $g$. Then the result follows from the triangle inequality.
\end{proof}

\begin{cor}\label{cor_comparable_rectangle}
    Let $R_1,R_2,R_3$ be $(\delta,t)$ rectangles over $J/4$. Suppose that $R_1$ is $\lambda_1$-comparable to $R_2$ and $R_2$ is $\lambda_2$-comparable to $R_3$. Then $R_1$ is $\lambda_3$-comparable to $R_3$, for some $\lambda_3$ depending on $K$, $\lambda_1,$ and $\lambda_2$.

\end{cor}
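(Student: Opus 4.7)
Write $R_i = f_i^\delta(I_i)$ for $i=1,2,3$, with each $I_i\subset J/4$. The plan is to exhibit a $(\lambda_3\delta,t)$-rectangle of the form $R''=f_1^{\lambda_3\delta}(I^*)$ containing both $R_1$ and $R_3$, where $I^*$ is an interval of length exactly $\sqrt{\lambda_3\delta/t}$ containing the convex hull $I_{13}$ of $I_1\cup I_3$. To set this up, I need two ingredients: a length bound on $I_{13}$ and a pointwise bound on $|f_1-f_3|$ over $I_{13}$.

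For the length bound, applying Lemma \ref{prop_comparable_rectangle} to the pairs $(R_1,R_2)$ and $(R_2,R_3)$ yields intervals $I_{12}=\operatorname{conv}(I_1\cup I_2)$ and $I_{23}=\operatorname{conv}(I_2\cup I_3)$ of lengths at most $\sqrt{\lambda_1\delta/t}$ and $\sqrt{\lambda_2\delta/t}$ respectively. Since $I_{12}$ and $I_{23}$ both contain the nonempty interval $I_2$, their union is itself a single interval containing $I_1\cup I_3$, whence
\[
|I_{13}|\leq |I_{12}|+|I_{23}|\leq (\sqrt{\lambda_1}+\sqrt{\lambda_2})\sqrt{\delta/t}.
\]

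For the vertical comparison, the same application of Lemma \ref{prop_comparable_rectangle} gives $|f_1-f_2|\lesssim \lambda_1^3\delta$ on $I_{12}$ and $|f_2-f_3|\lesssim \lambda_2^3\delta$ on $I_{23}$. Restricting both inequalities to $I_2$ and applying the triangle inequality yields $|f_1-f_3|\leq \delta'$ on $I_2$, where $\delta'=C(\lambda_1^3+\lambda_2^3)\delta$ and $C$ is an absolute constant. To promote this estimate from $I_2$ to $I_{13}$, I invoke Lemma \ref{prop_roughly_const} with $f=f_1$, $g=f_3$, $I=I_2\subset J/4$, and a dilation factor $\mu=O(\sqrt{\lambda_1}+\sqrt{\lambda_2})$ chosen so that $\mu I_2\supseteq I_{13}$; this is possible because $I_{13}$ lies within distance $\sqrt{\lambda_1\delta/t}+\sqrt{\lambda_2\delta/t}$ of $I_2$, while $|I_2|=\sqrt{\delta/t}$. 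Provided $\delta'\leq \Vert f_1-f_3\Vert/(6K)$, this yields $|f_1-f_3|\lesssim \mu^2\delta'$ on $I_{13}$; in the opposite regime $\Vert f_1-f_3\Vert < 6K\delta'$ the conclusion $|f_1-f_3|\lesssim_K \delta'$ is immediate on all of $J$. Either way, $|f_1(\theta)-f_3(\theta)|\leq C'\delta$ on $I_{13}$ with $C'=C'(K,\lambda_1,\lambda_2)$.

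It remains to choose $\lambda_3$ large enough (depending only on $K,\lambda_1,\lambda_2$) that $\sqrt{\lambda_3\delta/t}\geq |I_{13}|$ and $\lambda_3\geq C'+1$, pick $I^*\supseteq I_{13}$ of length exactly $\sqrt{\lambda_3\delta/t}$ inside $J$, and set $R''=f_1^{\lambda_3\delta}(I^*)$. Then $R_1\subseteq R''$ trivially, and for any $(\theta,y)\in R_3$ we have $\theta\in I_3\subseteq I^*$ and
\[
|y-f_1(\theta)|\leq |y-f_3(\theta)|+|f_3(\theta)-f_1(\theta)|\leq \delta+C'\delta\leq \lambda_3\delta,
\]
so $R_3\subseteq R''$ as well, proving $\lambda_3$-comparability. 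The only technical nuisance is verifying the hypothesis of Lemma \ref{prop_roughly_const} in the two regimes for $\Vert f_1-f_3\Vert$, and tracking constants through the chain; no new geometric input is needed.
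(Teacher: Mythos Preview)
Your argument is correct and follows the route the paper intends: the corollary is stated without proof precisely because it is meant to be an immediate consequence of Lemma~\ref{prop_comparable_rectangle}, and your proof unpacks exactly that lemma (together with Lemma~\ref{prop_roughly_const} to propagate the bound from $I_2$ to $I_{13}$). One small simplification you could make: the ``in particular'' clause of Lemma~\ref{prop_comparable_rectangle} already gives $R_1\subset C\lambda_1 R_2$ and $R_3\subset C\lambda_2 R_2$, so both $R_1$ and $R_3$ sit inside $C\max(\lambda_1,\lambda_2)R_2$; enlarging this to a $(\lambda_3\delta,t)$-rectangle with $\lambda_3\sim\max(\lambda_1,\lambda_2)^2$ finishes the proof without invoking Lemma~\ref{prop_roughly_const} separately.
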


\begin{lem}\label{prop_incomparable}
Let $0<c<1$. If $C$ is large enough depending on $c$ and $K$, then the following holds. Let $f,g\in \mathcal F$ and $R=f^\delta(I)$ and $R'=g^\delta(I')$ be two $C$-incomparable $(\delta,t)$-rectangles over subintervals of $J/4$. For $0<c<1$, let $\tilde R=f^\delta(\tilde I)$ and $\tilde R'=g^\delta(\tilde I')$ where $\tilde I=cI$, $\tilde I'=cI'$. Then $\tilde R$ and $\tilde R'$ are $100$-incomparable.
\end{lem}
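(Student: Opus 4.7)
The plan is to argue by contrapositive: assume $\tilde R$ and $\tilde R'$ are $100$-comparable, and deduce that $R$ and $R'$ must then be $C^{\ast}$-comparable for some $C^{\ast} = C^{\ast}(c, K)$; choosing $C > C^{\ast}$ in the hypothesis will yield the desired contradiction. By definition of $100$-comparability, there is a $(100\delta, t)$-rectangle $S = h^{100\delta}(L)$, with $|L| = 10\sqrt{\delta/t}$, containing $\tilde R \cup \tilde R'$. Applying Lemma \ref{prop_comparable_rectangle} directly to the $100$-comparable pair $\tilde R, \tilde R'$ (themselves $(\delta, t)$-rectangles over subintervals of $J/4$), I obtain the pointwise estimate $|f(\theta) - g(\theta)| \lesssim 10^{6}\delta$ on the convex hull $J_0$ of $\tilde I \cup \tilde I'$.

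The next step is to extend this estimate from $J_0$ to $I''$, the convex hull of $I \cup I'$. The key geometric input is that $\tilde I, \tilde I'$ share the centers of $I, I'$ respectively and both lie in $L$, so the centers of $I$ and $I'$ are separated by at most $10\sqrt{\delta/t}$; hence $|I''| \leq 11\sqrt{\delta/t}$, while $J_0 \supseteq \tilde I$ gives $|J_0| \geq c \sqrt{\delta/t}$. Thus $\mu := |I''|/|J_0| \leq 11/c$ and a short computation shows $I''$ is contained in the $2\mu$-dilation of $J_0$. I then split on the size of $\|f - g\|$: if $\|f - g\| \leq 6K \cdot 10^{6} \delta$, then $|f - g| \leq 6K\cdot 10^{6}\delta$ holds trivially on all of $J$; otherwise the hypothesis of Lemma \ref{prop_roughly_const} is satisfied at the enlarged scale $10^{6}\delta$, and that lemma extends the $J_0$-estimate to $I''$ at the cost of a factor $\lesssim (1/c)^{2}$. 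Either way, I obtain $|f(\theta) - g(\theta)| \leq C_3(c, K)\, \delta$ for every $\theta \in I''$.

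To finish, I would set $C^{\ast} := \max(C_3 + 1,\, 121)$ and let $I^{\ast}$ be any interval of length $\sqrt{C^{\ast}\delta/t}$ containing $I''$, which is possible since $|I''| \leq 11\sqrt{\delta/t} \leq \sqrt{C^{\ast}\delta/t}$. Then $R^{\ast} := f^{C^{\ast}\delta}(I^{\ast})$ is a $(C^{\ast}\delta, t)$-rectangle; it contains $R$ because $I \subseteq I^{\ast}$ and $\delta \leq C^{\ast}\delta$, and it contains $R'$ by the triangle inequality $|y - f(\theta)| \leq |y - g(\theta)| + |f(\theta) - g(\theta)| \leq \delta + C_3\delta \leq C^{\ast}\delta$ for $(\theta,y) \in R'$, using $I' \subseteq I''$ to apply the pointwise bound. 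This witnesses the $C^{\ast}$-comparability of $R$ and $R'$, completing the contradiction.

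The main obstacle I anticipate is the bookkeeping of constants, together with the case split on $\|f - g\|$ required to invoke Lemma \ref{prop_roughly_const} at the enlarged scale $10^{6}\delta$: that lemma's hypothesis $\delta \leq \|f - g\|/(6K)$ must be verified at this new scale, and one must check that the resulting constant $C^{\ast}$ depends only on $c$ and $K$, not on $t$ or $\delta$.
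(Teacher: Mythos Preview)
Your proof is correct and follows essentially the same contrapositive strategy as the paper: assume $\tilde R,\tilde R'$ are $100$-comparable, deduce $\diam(I\cup I')\lesssim\sqrt{\delta/t}$ and $|f-g|\lesssim\delta$ on the convex hull of $I\cup I'$, and conclude $R,R'$ are $C$-comparable. Your version is in fact more careful than the paper's three-line sketch: you explicitly separate the application of Lemma~\ref{prop_comparable_rectangle} (to get the bound on the convex hull of $\tilde I\cup\tilde I'$) from the extension step via Lemma~\ref{prop_roughly_const}, and you handle the case split on $\Vert f-g\Vert$ needed for the latter. One minor remark: $\tilde R$ and $\tilde R'$ are strictly speaking $(\delta,t/c^2)$-rectangles rather than $(\delta,t)$-rectangles (since $|\tilde I|=c\sqrt{\delta/t}$), so the witness interval $L$ has length $10c\sqrt{\delta/t}$ rather than $10\sqrt{\delta/t}$; but this only improves your bound on the separation of centers and does not affect the argument.
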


\begin{proof}
Suppose towards contradiction that $\tilde R$ and $\tilde R'$ are $100$-comparable. Then
$$
\dist(c_I,c_{I'})\leq  \sqrt{100c\delta/t},
$$
and thus $\diam (I\cup I')\lesssim \sqrt{\delta/t}$. Also, we have $|f(\theta)-g(\theta)|\le 100\delta$ for $\theta\in \tilde I$, and thus by Lemma \ref{prop_comparable_rectangle}, we have $|f(\theta)-g(\theta)|\lesssim \delta$ for $\theta\in I\cup I'$. Thus $R,R'$ are $C$-comparable for a sufficiently large $C$, from which a contradiction arises.
\end{proof}

\begin{lem}\label{prop_number_R=O(1)}
Let $\mathcal R$ be a family of pairwise $100$-incomparable $(\delta,t)$-rectangles $R=f^\delta(I)$ such that all $f\in\mathcal{F}$ are contained in a ball $B\subset C^2(I)$ of radius $3t$. Suppose on the other hand that all $R\in \mathcal R$ are contained in a $(\lambda\delta,t)$-rectangle $\tilde R$ where $\lambda\ge 100$. Then $\#\mathcal R\lesssim \lambda^{5/2}$.
\end{lem}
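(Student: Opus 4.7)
The plan is to parametrize each rectangle $R=f^\delta(I)\in\mathcal R$ by a triple of real numbers lying in an explicit box, and then show that two rectangles whose parameters fall into a common small cell must be $100$-comparable. Write $\tilde R=h^{\lambda\delta}(\tilde I)$ with $h\in\mathcal F$, and set $p:=f-h$; the ball hypothesis gives $\Vert p\Vert_{C^2(J)}\leq 6t$. For each $R$ put $\theta_0:=m(I)$, $u:=p(\theta_0)$, and $v:=p'(\theta_0)$. The containment $R\subset\tilde R$ forces $\theta_0\in\tilde I$ and $|p(\theta)|\leq\lambda\delta$ on $I$; expanding $p$ at the two endpoints of $I$ and invoking $|p''|\leq 6t$ yields $|u|\lesssim\lambda\delta$ and $|v|\lesssim\lambda\sqrt{\delta t}$. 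Thus the triples $(\theta_0,u,v)$ range over a box of dimensions $\sqrt{\lambda\delta/t}\times\lambda\delta\times\lambda\sqrt{\delta t}$.

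Next, I would localize and discretize. Partition $\tilde I$ into $O(\sqrt\lambda)$ subintervals $C_\Theta$ of length $\sqrt{\delta/t}$; group the rectangles in $\mathcal R$ according to which $C_\Theta$ contains $\theta_0$. For a fixed $C_\Theta$ with midpoint $\theta^{**}$, replace each $R$'s parameters by $U:=p(\theta^{**})$ and $V:=p'(\theta^{**})$. A Taylor shift (using $|\theta_0-\theta^{**}|\leq\sqrt{\delta/t}$) preserves $|U|\lesssim\lambda\delta$ and $|V|\lesssim\lambda\sqrt{\delta t}$. Now subdivide the $(U,V)$-region into cells of dimensions $c\delta\times c\sqrt{\delta t}$ for a small absolute constant $c$, producing $O(\lambda^2)$ sub-cells per $C_\Theta$. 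I claim any two rectangles $R_1,R_2\in\mathcal R$ whose triples land in a common sub-cell are $100$-comparable: with $I_h$ the convex hull of $I_1\cup I_2$ (of length at most $2\sqrt{\delta/t}$), Taylor expansion around $\theta^{**}$ gives
\begin{equation*}
|f_1(\theta)-f_2(\theta)|=|p_1(\theta)-p_2(\theta)|\leq |U_1-U_2|+|V_1-V_2|\,|\theta-\theta^{**}|+6t|\theta-\theta^{**}|^2\lesssim\delta
\end{equation*}
for $\theta\in I_h$, since $|\theta-\theta^{**}|\leq\sqrt{\delta/t}$. Extending $I_h$ to any interval $I_h'\subset J$ of length $\sqrt{100\delta/t}$ shows that both $R_1$ and $R_2$ sit inside $f_1^{100\delta}(I_h')$, a legitimate $(100\delta,t)$-rectangle. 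Since $\mathcal R$ is pairwise $100$-incomparable, each sub-cell contains at most one element of $\mathcal R$, and therefore $\#\mathcal R\lesssim\sqrt\lambda\cdot\lambda^2=\lambda^{5/2}$.

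The main obstacle is the choice of reference point $\theta^{**}$. A global parametrization by $(m(I),f(m(I)),f'(m(I)))$ is tempting, but the change-of-coordinates under a shift of $\theta_0$ is an affine shear whose quadratic Taylor remainder has size $t(\Delta\theta_0)^2$; across the full $\theta$-range $\sqrt{\lambda\delta/t}$ this remainder reaches $\lambda\delta$, destroying the separation in $u$ between nominally well-separated rectangles. Re-centring inside each $C_\Theta$-cell (whose width $\sqrt{\delta/t}$ is the natural horizontal scale of a $(\delta,t)$-rectangle) caps the remainder at $O(\delta)$, exactly the tolerance built into $100$-comparability. This compatibility is what forces the three distinct scales $\sqrt{\delta/t}$, $\delta$, $\sqrt{\delta t}$ in the discretization and produces the final exponent $1/2+1+1=5/2$.
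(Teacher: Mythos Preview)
Your argument is correct and reaches the same bound $\lambda^{5/2}$, but via a genuinely different route from the paper. The paper first proves a \emph{pointwise multiplicity bound}: any point of $\RR^2$ lies in at most $O(\lambda)$ rectangles from $\mathcal R$. This is shown by observing that if $(\theta_0,y_0)\in R_i\cap R_j$ then the slopes $f_i'(\theta_0)$ are pairwise $\gtrsim\sqrt{\delta t}$-separated (from $100$-incomparability) yet confined to an interval of length $\lesssim\lambda\sqrt{\delta t}$ (from containment in $\tilde R$). Integrating the resulting inequality $\sum_R {\bf 1}_R\lesssim\lambda$ over $\tilde R$, whose area is $\sim\lambda^{3/2}\delta^{3/2}t^{-1/2}$, and comparing with $\sum_R|R|\sim(\#\mathcal R)\delta^{3/2}t^{-1/2}$ gives the bound. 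Your approach instead parametrizes each rectangle by a triple $(\theta_0,U,V)$ in a three-dimensional phase space and shows directly, by a volume/packing count, that incomparable rectangles occupy disjoint cells. Both arguments hinge on the same Taylor estimate relating $|f_1-f_2|$, $|f_1'-f_2'|$, and the second-derivative bound $6t$; the paper packages this as ``multiplicity $\lesssim\lambda$ plus area'', while you package it as ``$\sqrt\lambda\cdot\lambda\cdot\lambda$ cells''. The paper's version yields the intermediate multiplicity statement as a by-product, which is occasionally useful on its own; your version is more elementary in that it avoids the integration step entirely.

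One small caution: you invoke $\|p\|_{C^2}=\|f-h\|_{C^2}\leq 6t$ to bound $|v|=|p'(\theta_0)|$. Strictly speaking, the hypothesis places the functions $f$ coming from the rectangles of $\mathcal R$ in a ball of radius $3t$, but says nothing about the function $h$ defining $\tilde R$. The paper's proof only ever uses $\|f_i-f_j\|\leq 6t$ for pairs from $\mathcal R$. Your discretization in $V$ really only needs the \emph{diameter} of the set $\{f_i'(\theta^{**})\}$ to be $O(\lambda\sqrt{\delta t})$, and this follows from $|(f_i-f_j)''|\leq 6t$ together with $|(f_i-f_j)(\theta^{**})|\leq 2\lambda\delta$ via the same contradiction argument the paper uses for its upper bound. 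So the gap is cosmetic, but it would be cleaner to phrase the $V$-range as a bound on pairwise differences $V_i-V_j$ rather than on $|V_i|$ itself.
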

\begin{proof}
We will show that every point in $\R^2$ can be contained in $\lesssim \lambda$ many rectangles $R/2$ where $R\in \mathcal R$. (Here $R/2:=f^\delta(I/2)$.) Then the lemma follows from the simple inequality
$$
(\#\mathcal R)\delta^{3/2}t^{-1/2}\lesssim \int_{\tilde R}\sum_{R\in \mathcal R}{\bf 1}_{R/2}\lesssim \lambda|\tilde R|\lesssim \lambda^{5/2} \delta^{3/2}t^{-1/2}.
$$
Suppose there is some $(\theta_0,y_0)\in \R^2$ that is contained in $N$ pairwise $100$-incomparable $(\delta,t)$-rectangles of the form $R_i/2$, $i=1,\ldots,N$. We want to show $N\lesssim \lambda$.

It suffices to show that for every pair $1\le i\ne j\le N$,
\begin{align}
    &|f'_i(\theta_0)-f'_j(\theta_0)|\le 10\lambda \sqrt{\delta t},\label{eqn_upper_bound_Wolff_Lem1.2}\\
    &|f'_i(\theta_0)-f'_j(\theta_0)|>  \sqrt{\delta t}.\label{eqn_lower_bound_Wolff_Lem1.2}
\end{align}
Suppose $(\theta_0,y_0)\in (R_i/2)\cap (R_j/2)$. In particular, $\theta_0\in (I_i/2)\cap (I_j/2)$.

Denote $h=h_{i,j}=f_i-f_j$ and $I=I_i\cap I_j$. Then $\theta_0\in I$, $\sqrt{\delta/t}\le |I|\le 2\sqrt{\delta/t}$ and $|h(\theta_0)|\le \delta$.

For \eqref{eqn_upper_bound_Wolff_Lem1.2}, suppose $|h'(\theta_0)|>10\lambda  \sqrt{\delta t}$. Then since $\Vert f_i-f_j\Vert\le 6t$, we have in particular $\norm{h''}_\infty\leq 6t$, and thus for every $\theta\in I$, we have
$$
|h'(\theta)|>10\lambda \sqrt{\delta t}-12t\sqrt{\delta/t}\geq 5\lambda \sqrt{\delta t},
$$
since $\lambda \ge 100$. Since $|h(\theta_0)|\le \delta$, this implies that for some $\theta\in I$ we have
$$
|h(\theta)|\geq 5\lambda \sqrt{\delta t}\sqrt{\delta/ t}/2-\delta>\lambda \delta,
$$
contradicting the assumption that all rectangles $R_i\sub \tilde R$. This establishes \eqref{eqn_upper_bound_Wolff_Lem1.2}.

For \eqref{eqn_lower_bound_Wolff_Lem1.2}, we argue similarly. Suppose that $|h'(\theta_0)|\le \sqrt{\delta t}$. Then using $\norm{h''}_\infty\leq 6t$, we have for every $\theta'\in I$
$$
|h'(\theta)|\le \sqrt{\delta t}+12t\sqrt{\delta/t}\leq 13\sqrt{\delta t}.
$$
Thus for every $\theta\in I$ we have
$$
|h(\theta)|\le |h(\theta_0)|+26\sqrt{\delta t}\sqrt{\delta/t}\leq 100\delta,
$$
contradicting the assumption that $R_i,R_j$ are $100$-incomparable. This establishes \eqref{eqn_lower_bound_Wolff_Lem1.2}.
\end{proof}

\begin{lem}\label{prop_C_s_comparable}
Let $\mathcal R$ be a finite family of pairwise $100$-incomparable $(\delta,t)$-rectangles $R=f^\delta(I)$ such that all $f\in \mathcal{F}$ are contained in a ball of radius $3t$. If $C\ge 100$, then there is a $C$-incomparable subcollection $\mathcal R'\sub \mathcal R$ such that $\#\mathcal R'\sim \#\mathcal R$.
\end{lem}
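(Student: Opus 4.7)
The plan is to construct a ``comparability graph'' on $\mathcal{R}$ and then extract a large independent set. Define a graph $G$ with vertex set $\mathcal{R}$ in which two rectangles $R$ and $R'$ are joined by an edge precisely when they are $C$-comparable. Once we show that $G$ has maximum degree bounded by some $M = M(K, C)$, the standard greedy argument---repeatedly select a vertex, place it in $\mathcal{R}'$, and delete it together with its (at most $M$) neighbors---produces a $C$-incomparable subfamily $\mathcal{R}' \subset \mathcal{R}$ with $\#\mathcal{R}' \geq \#\mathcal{R}/(M+1) \sim \#\mathcal{R}$, which is the desired conclusion.

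The main step is therefore the degree bound. Fix any $R_0 \in \mathcal{R}$ and let $R_1, \ldots, R_N \in \mathcal{R}$ be the rectangles that are $C$-comparable to $R_0$. By Lemma \ref{prop_comparable_rectangle} applied with $\lambda = C$, each $R_i$ satisfies $R_i \subset C_0 C \cdot R_0$ for some absolute constant $C_0 \sim 1$. Consequently, the entire subfamily $\{R_0, R_1, \ldots, R_N\}$ fits inside the single $(C_0 C \delta, t)$-rectangle given by the $C_0 C$-dilation of $R_0$. The subfamily inherits the pairwise $100$-incomparability from $\mathcal{R}$, and the associated functions $f_0, f_1, \ldots, f_N$ still lie in a ball of radius $3t$ in $C^2(J)$, so Lemma \ref{prop_number_R=O(1)} applies with dilation parameter $\lambda = C_0 C$, which satisfies $\lambda \ge 100$ since $C \ge 100$. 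This gives $N + 1 \lesssim (C_0 C)^{5/2}$, and hence the degree of $R_0$ in $G$ is bounded by $M \lesssim C^{5/2}$, where the implicit constant depends only on $K$.

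I expect the argument to be essentially routine once these observations are assembled; the main points to verify are that Lemma \ref{prop_comparable_rectangle} is being applied in a setting consistent with its ``over $J/4$'' hypothesis and that the dilation constant there is indeed absolute, and that the hypotheses of Lemma \ref{prop_number_R=O(1)} are correctly inherited by the subfamily $\{R_0, R_1, \ldots, R_N\}$. Neither appears to be a serious obstacle, and the constants produced by the argument can be absorbed into the $\sim$ in the conclusion, which is permitted to depend on $K$ and $C$.
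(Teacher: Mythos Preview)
Your approach is correct and arguably cleaner than the paper's. Both arguments ultimately feed into Lemma \ref{prop_number_R=O(1)}, but the packaging differs: the paper iteratively extracts maximal $C$-incomparable subcollections $\mathcal{R}_1,\mathcal{R}_2,\ldots,\mathcal{R}_N$ (each from the remainder of $\mathcal{R}$), then argues that the ``starting'' rectangles $R_1,\ldots,R_N$ are mutually $C$-comparable and hence $N=O(1)$ by Lemma \ref{prop_number_R=O(1)}, so one of the $\mathcal{R}_i$ has size $\sim\#\mathcal{R}$. Your version instead bounds the maximum degree of the $C$-comparability graph directly and extracts an independent set greedily. Your route is more transparent and avoids the somewhat delicate bookkeeping of the iterative construction; the paper's route has the minor advantage that it produces a \emph{maximal} $C$-incomparable family, though that extra property is not used downstream.

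One small technical slip: the $C_0C$-dilation $C_0C\cdot R_0$ of a $(\delta,t)$-rectangle has base interval of length $C_0C\sqrt{\delta/t}$, which matches the base length of a $((C_0C)^2\delta,\,t)$-rectangle, not a $(C_0C\delta,\,t)$-rectangle. So Lemma \ref{prop_number_R=O(1)} should be applied with $\lambda\sim (C_0C)^2$, yielding $N+1\lesssim C^5$ rather than $C^{5/2}$. This is harmless for the conclusion since the bound is still $O_K(1)$ once $C$ is fixed. Your caveats about the ``over $J/4$'' hypothesis in Lemma \ref{prop_comparable_rectangle} are well-placed; in the paper's applications the rectangles always live over $J/4$, so this is not an obstruction.
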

\begin{proof}
Start with $R_1\in \mathcal R$. Choose a maximally $C$-incomparable subcollection $\mathcal R'$ containing $R_1$. For each $R\in \mathcal R'$, let $S(R)$ be the set of rectangles in $\mathcal R$ which are $C$-comparable to $R$. Then $\mathcal R= \cup_{R\in \mathcal R'}S(R)$ by maximality. To prove that $\# \mathcal R\sim \# \mathcal R'$, it suffices to show that $\#S(R)\lesssim 1$ for every $R\in \mathcal R'$. This follows from Lemma \ref{prop_comparable_rectangle} with $\lambda=C$ and Lemma \ref{prop_number_R=O(1)} with $\lambda=C^3$.



\end{proof}

We end this section with a coarse bound that will be used in some dyadic pigeonholing argument later.
\begin{lem}\label{prop_coarse_bound_R}
Let $\lambda\ge 100$. If $\mathcal R$ is a collection of pairwise $\lambda$-incomparable $(\delta,t)$-rectangles of the form $R=f^\delta(I)$ where $f\in \mathcal F$, then we have a coarse bound $\#\mathcal R\leq \delta^{-C}$ for some $C=C(D)$ for $\delta$ sufficiently small (depending on $K$).
\end{lem}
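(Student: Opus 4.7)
The plan is to pigeonhole simultaneously on the function $f \in \mathcal{F}$ and on the interval $I \subset J$ for each rectangle $R = f^\delta(I) \in \mathcal{R}$, and then show that each pigeonhole class contributes at most one rectangle to a $\lambda$-incomparable collection. Since $\mathcal{F}$ is a doubling metric space with doubling constant $D$ and diameter at most $K$, iterating the doubling condition produces a cover of $\mathcal{F}$ by at most $D^{\lceil \log_2(K/\delta) \rceil} \leq \delta^{-C_1(D)}$ balls of radius $\delta$ in $C^2(J)$, provided $\delta$ is sufficiently small depending on $K$. Separately, I cover $J$ by a family of $O(|J|\sqrt{t/\delta}) \lesssim \delta^{-1/2}$ intervals of length $\sqrt{\delta/t}$, spaced by $\tfrac{1}{2}\sqrt{\delta/t}$, so that every subinterval of $J$ of length $\sqrt{\delta/t}$ is contained in one of them.

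The key step is to show that two rectangles $R_i = f_i^\delta(I_i)$, $i=1,2$, whose functions lie in a common $\delta$-ball of the $C^2$-cover and whose intervals $I_i$ both lie in a common covering interval $I''$, are automatically $100$-comparable. Indeed, $\|f_1 - f_2\|_{C^2(J)} \leq 2\delta$ forces $|f_1(\theta) - f_2(\theta)| \leq 2\delta$ pointwise on $J$, so $R_1 \cup R_2 \subset f_1^{3\delta}(I'')$. After enlarging $I''$ to an interval $\tilde I''$ of length $\sqrt{100\delta/t}$ (shifting slightly, with trivial boundary adjustments by enlarging $J$ at the start), the set $f_1^{100\delta}(\tilde I'')$ is a genuine $(100\delta, t)$-rectangle containing $R_1 \cup R_2$. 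Since $\lambda \geq 100$, any such $(100\delta, t)$-rectangle is contained in some $(\lambda\delta, t)$-rectangle with the same central function, so $R_1$ and $R_2$ are $\lambda$-comparable.

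Consequently, the pairwise $\lambda$-incomparable collection $\mathcal{R}$ contains at most one element per pigeonhole class, yielding
\[
\#\mathcal{R} \leq \delta^{-C_1(D)} \cdot \delta^{-1/2} \leq \delta^{-C(D)}
\]
for a suitable constant $C = C(D)$. This is a pure parameter-counting pigeonhole argument, and I do not anticipate any substantive obstacle. The only items to monitor are that the factor $K^{\log_2 D}$ coming from the doubling iteration can be absorbed into $\delta^{-C(D)}$ for $\delta$ small enough depending on $K$, and that the boundary issue when enlarging $I''$ to $\tilde I''$ is harmless (e.g.\ by working in a slightly enlarged interval containing $J$, to which the cinematic estimates still apply up to constants).
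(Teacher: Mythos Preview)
Your argument is correct and follows essentially the same route as the paper: pigeonhole on the function $f$ via the doubling property of $\mathcal{F}$ and on the position of the interval $I$, then observe that two rectangles in the same class are $\lambda$-comparable. One trivial parameter fix: intervals of length $\sqrt{\delta/t}$ spaced by $\tfrac{1}{2}\sqrt{\delta/t}$ do not quite contain every length-$\sqrt{\delta/t}$ subinterval of $J$; use covering intervals of length $2\sqrt{\delta/t}$ (or simply discretize the centers of the $I_i$) and the rest goes through unchanged.
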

\begin{proof}
Let $R=f^\delta(I)$ and $R'=g^\delta(I')$ where $f,g\in \mathcal F$. It is easy to see that if the centres of $I,I'$ are no more than $\sqrt{\delta/t}$-separated and $\Vert f-g\Vert \le \delta$, then $R,R'$ must be $\lambda$-comparable. The result now follows from the fact that $\mathcal{F}$ has diameter $K$ and doubling constant $D$, and choosing $\delta\le K^{-1}$ sufficiently small.
\end{proof}


\section{Lens counting and a bipartite curve-rectangle tangency bound}\label{lensCountingBoundSection}
In this section, we will use Marcus and Tardos' lens-counting theorem from \cite{MarcusTardos} to prove a bipartite version of the weak $\ell^{3/2}$ bound \eqref{boundNumberTangencyRectangles} from Section \ref{thmthm_wolffAnalogue_maximal_generalSketchSection}. The precise statement is as follows. We remark that all propositions in this section make no assumption on the doubling property in the definition of cinematic functions.

\begin{prop}\label{thm_number_estimate}
Let $\mathcal{F}$ be a family of cinematic functions with cinematic constant $K$. Then there is a constant $C=C(K)>0$ so that the following holds.

Let $A\geq 1$. Let $\delta,t>0$ with $0<\delta\leq At$. Let $\mathcal W,\mathcal B\subset \mathcal F$ be finite sets. Suppose $\mathcal{W}\cup\mathcal{B}$ has diameter at most $6t,$ and the pair $(\mathcal{W},\mathcal{B})$ is $t/A$-separated.

Let $\mathcal R$ be a collection of pairwise $100$-incomparable $(\delta,t)$-rectangles of the form $f^\delta(I)$ where $f\in \mathcal F$ and $I\sub J/4$. Suppose each rectangle in $\mathcal{R}$ is tangent to at least $\mu$ elements of $\mathcal W$ and at least $\nu$ elements of $\mathcal B$. Then
\begin{equation}\label{boundOnMuNuR}
\#\mathcal R\leq C A^C \left(\frac{\#\mathcal W}{\mu}+\frac{\#\mathcal B}{\nu}\right)^{3/2}\log \left(\frac{\#\mathcal W}{\mu}+\frac{\#\mathcal B}{\nu}\right).
\end{equation}
\end{prop}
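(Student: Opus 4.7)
The plan is to reduce the count of tangency rectangles to a count of lenses in an arrangement of pseudo-circles, then invoke Marcus and Tardos's lens-counting theorem \cite{MarcusTardos}.

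\emph{Reduction to $\mu=\nu=1$.} First I would perform a standard random sampling to reduce to the case $\mu=\nu=1$. Select $\mathcal{W}_0\subseteq\mathcal{W}$ and $\mathcal{B}_0\subseteq\mathcal{B}$ by including each element independently with probabilities $2/\mu$ and $2/\nu$, respectively. A Chernoff-type calculation shows that with positive probability one has $\#\mathcal{W}_0\lesssim\#\mathcal{W}/\mu$, $\#\mathcal{B}_0\lesssim\#\mathcal{B}/\nu$, and a positive fraction of rectangles in $\mathcal{R}$ remain tangent to at least one curve from each of $\mathcal{W}_0,\mathcal{B}_0$; it therefore suffices to prove
\[
\#\mathcal{R}\lesssim A^{C}(\#\mathcal{W}+\#\mathcal{B})^{3/2}\log(\#\mathcal{W}+\#\mathcal{B})
\]
under the assumption that every rectangle has at least one white and one black tangent curve.

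\emph{Lens construction.} For each $R\in\mathcal{R}$ pick a tangent pair $(w_R,b_R)\in\mathcal{W}\times\mathcal{B}$; since $(\mathcal{W},\mathcal{B})$ is $t/A$-separated and $\mathcal{W}\cup\mathcal{B}$ has diameter $\le 6t$, we have $\|w_R-b_R\|\in[t/A,6t]$. Dyadically pigeonholing over this $C^2$-distance scale costs a $\log A$ factor, after which all $\|w_R-b_R\|$ may be taken comparable to a common value $s$; Lemma \ref{prop_tangent1} then controls $\Delta(w_R,b_R)\lesssim \delta t/s$. The key structural input, via Lemma \ref{lem_KOV_3.2}, is that any two graphs of functions in $\mathcal{F}$ meet in at most two points, so the graphs form a pseudo-circle arrangement. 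To promote near-tangencies into genuine double crossings, I would replace each $f\in\mathcal{W}$ by $f+c\delta$ and each $f\in\mathcal{B}$ by $f-c\delta$ for a small $c=c(K)$; with an appropriate sign choice, the perturbed pair $(\tilde w_R,\tilde b_R)$ crosses transversely in exactly two points within an $O(\sqrt{\delta/t})$-neighbourhood of the base interval of $R$, bounding a lens $L_R$ of ``width'' comparable to $\delta$.

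\emph{Applying Marcus--Tardos.} Closing the perturbed graphs into closed Jordan curves far above and below the horizontal strip of interest preserves the at-most-two-intersection property, so \cite[Lemma 10]{MarcusTardos} applies to the resulting arrangement of $n=\#\mathcal{W}+\#\mathcal{B}$ pseudo-circles. The $100$-incomparability of rectangles in $\mathcal{R}$, combined with Lemmas \ref{prop_comparable_rectangle} and \ref{prop_incomparable}, ensures that the family $\{L_R\}_{R\in\mathcal{R}}$ is pairwise non-overlapping in the combinatorial sense required by Marcus--Tardos: distinct lenses are confined to essentially disjoint base intervals, and no lens contains an endpoint of another in its interior. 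Their theorem then gives $\#\mathcal{R}\lesssim n^{3/2}\log n$, and combining with the pigeonhole and random-sampling losses yields \eqref{boundOnMuNuR} with the polynomial $A^{C}$ absorbing all remaining factors.

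\emph{Main obstacle.} The hard part will be the lens construction and verification of non-overlap. Since many rectangles may share the same tangent pair of curves, the pseudo-circle property (each pair of curves determines at most one lens after perturbation) is crucial for preventing overcounting, and the rectangle incomparability must be translated into a combinatorial statement about lens endpoints. Carefully arranging the sign choices in the perturbation, dealing with the degenerate regime where $\Delta(w_R,b_R)$ is comparable to $\delta$ (so no genuine tangency exists), and tracking the $A^{C}$ dependence across dyadic scales requires the full geometric toolkit developed in Section \ref{geomOfCinematicSection}.
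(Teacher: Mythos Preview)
Your proposal is essentially the paper's approach: random sampling to $\mu=\nu=1$, perturbation to force genuine double crossings, extension of the graphs to closed pseudo-circles, and Marcus--Tardos. Two details as sketched would not work and are handled differently in the paper. First, a fixed-sign perturbation fails: if $w_R-b_R\geq 0$ throughout $J$, shifting $w_R$ up and $b_R$ down only separates the graphs further and produces no crossings. The paper (Lemma~\ref{lem_random_1}, Corollary~\ref{choiceOfShift}) instead chooses $\eta\in\{-1,0,1\}$ per rectangle and then pigeonholes to a single global $\eta$ that works for at least a third of $\mathcal R$. Second, your non-overlap criterion (``essentially disjoint base intervals'') is not what Marcus--Tardos requires; two lenses overlap precisely when they share a positive-length arc of a \emph{common} curve $w$. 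The correct mechanism (Lemma~\ref{prop_comparable}) is that if this happens, the zero-pairs of $w-b_1$ and $w-b_2$ are both confined to an interval of length $O(\sqrt{\delta/t})$, forcing the two rectangles to be $C_c$-comparable; one therefore first passes (via Lemma~\ref{prop_C_s_comparable}) from $100$-incomparable to $2C_c$-incomparable, not the other way around.

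Finally, your dyadic pigeonholing over $s=\|w_R-b_R\|$ does not substitute for the paper's preliminary reductions (Lemmas~\ref{lem_c_2} and~\ref{lem_r=t}). After your pigeonholing you may have $s\sim t/A$ while only $\delta\leq At$, so $\delta/s$ can be as large as $A^2$; but every geometric input (Lemma~\ref{prop_two_zeros}, Lemma~\ref{lem_random_1}, Lemma~\ref{prop_comparable}) needs $\delta$ small relative to the separation scale. The paper avoids this by first shrinking the rectangles to reach $\delta\leq c_2 t$ and then reducing to the case where $(\mathcal W,\mathcal B)$ is genuinely $2t$-separated, absorbing all losses into $A^C$.
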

%
%
\subsection{Preliminary reductions}
\begin{lem}\label{lem_c_2}
Let $c>0$. Suppose there exists a constant $C_1 = C_1(K)>0$ so that Proposition \ref{thm_number_estimate} holds whenever $\delta\leq ct$. Then there is a constant $C = C(C_1, K, c)$ so that  Proposition \ref{thm_number_estimate} holds for all $\delta\leq At$.
\end{lem}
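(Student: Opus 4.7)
The strategy is to rescale the tangency parameter from $t$ to $\tilde t := \delta/c$, so that the hypothesized regime $\delta \leq ct$ becomes applicable. If $\delta \leq ct$, the conclusion follows directly from the hypothesis with constant $C_1$. Assume henceforth that $ct < \delta \leq At$, and set $\tilde t = \delta/c$ and $\tilde A = A\tilde t/t = A\delta/(ct) \leq A^2/c$. Then $\delta = c\tilde t$, and one verifies the prerequisites at the new scale: $\mathcal{W}\cup\mathcal{B}$ has diameter $\leq 6t \leq 6\tilde t$, and the pair $(\mathcal{W},\mathcal{B})$ is $t/A = \tilde t/\tilde A$-separated.

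For each $R = f^\delta(I) \in \mathcal{R}$, the interval $I$ has length $\sqrt{\delta/t} > \sqrt{c}$, so we select a central sub-interval $\tilde I \subset I$ of length $\sqrt{c} = \sqrt{\delta/\tilde t}$ and form $\tilde R = f^\delta(\tilde I)$, which is a $(\delta, \tilde t)$-rectangle contained in $R$. Since $\tilde R \subset R$, each $\tilde R$ inherits tangency to at least $\mu$ elements of $\mathcal{W}$ and $\nu$ elements of $\mathcal{B}$. Provided a positive fraction of $\{\tilde R : R \in \mathcal{R}\}$ is pairwise $100$-incomparable as $(\delta, \tilde t)$-rectangles, the hypothesis applied at the new scale will give
\[
\#\mathcal{R} \lesssim C_1 \tilde A^{C_1} \left(\frac{\#\mathcal{W}}{\mu} + \frac{\#\mathcal{B}}{\nu}\right)^{3/2} \log \left(\frac{\#\mathcal{W}}{\mu} + \frac{\#\mathcal{B}}{\nu}\right) \leq C_1 \left(\frac{A^2}{c}\right)^{C_1}(\cdots),
\]
which yields the lemma with a constant $C = C(C_1, K, c)$ on the order of $2C_1$.

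The remaining and most delicate task is to guarantee pairwise $100$-incomparability at the new scale. The shrinking ratio $c' := |\tilde I|/|I| = \sqrt{ct/\delta}$ lies in $[\sqrt{c/A},1)$, so by Lemma \ref{prop_incomparable}, if $R, R'$ are $C_0$-incomparable as $(\delta, t)$-rectangles for a sufficiently large $C_0 = C_0(c', K)$, then $\tilde R, \tilde R'$ are automatically $100$-incomparable as $(\delta, \tilde t)$-rectangles. To upgrade our initial $100$-incomparability of $\mathcal R$ to $C_0$-incomparability, we appeal to Lemma \ref{prop_C_s_comparable}, which extracts a $C_0$-incomparable subcollection of size comparable to $\#\mathcal{R}$, provided the base functions lie in a $C^2$-ball of radius $3t$. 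To satisfy this latter condition, we first replace each $R = f^\delta(I)$ by $R'' := w_R^\delta(I)$, where $w_R \in \mathcal{W}$ is any function to which $R$ is tangent; the inequality $\sup_{\theta\in I}|f(\theta) - w_R(\theta)| \leq 5\delta$ implies that the tangency and incomparability constants shift only by bounded additive amounts, which can be absorbed into the problem's constants. The base functions of $\{R''\}$ now lie in $\mathcal{W}$, whose $C^2$-diameter is $\leq 6t$, so Lemmas \ref{prop_C_s_comparable} and \ref{prop_incomparable} apply in sequence to produce the desired $100$-incomparable subcollection of $(\delta, \tilde t)$-rectangles, completing the proof. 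The principal obstacle is coordinating the various incomparability constants through this replacement and shrinking procedure, but each loss is polynomial in $A$ and $1/c$ and is absorbed by the $A^C$ factor in the final bound.
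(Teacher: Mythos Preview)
Your argument is correct and follows essentially the same route as the paper: rescale to $\tilde t=\delta/c$, pass to a $C_0$-incomparable subcollection via Lemma~\ref{prop_C_s_comparable}, shrink the base intervals and invoke Lemma~\ref{prop_incomparable} to recover $100$-incomparability at the new scale, then apply the hypothesized case. You are in fact more careful than the paper on two points: you explicitly track the new separation parameter $\tilde A\le A^2/c$, and you address the ``base functions in a $3t$-ball'' hypothesis of Lemma~\ref{prop_C_s_comparable} by swapping each defining function $f$ for a tangent $w_R\in\mathcal W$; the paper simply applies that lemma without comment.
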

\begin{proof}
If $\delta\leq ct$ then we are done. Suppose instead that $\delta>ct$, and let $\mathcal{W}$, $\mathcal{B}$, $\mathcal{R}$, $\mu$, and $\nu$ be as in the statement of the Proposition. Let $C_1\sim 1$ be a constant to be specified below. Apply Lemma \ref{prop_C_s_comparable} to find a subcollection $\mathcal R'\sub \mathcal R$ that is pairwise $C_1$-incomparable, with $\#\mathcal R'\sim \#\mathcal R$. (By Lemma \ref{prop_coarse_bound_R} we may suppose that $\mathcal R$ is finite.) Next, shrink the length of each $R\in \mathcal R'$ to be $\sqrt{\delta/t'}$ where $t'=c^{-1}\delta>t$, and so $\delta=ct'$. Denote by $\mathcal R''$ the resulting collection of $(\delta,t')$-rectangles. If $C_1$ is chosen large enough, then using Lemma \ref{prop_incomparable}, we have that $\mathcal R''$ is $100$-incomparable. Applying Proposition \ref{thm_number_estimate} with $t'$ in place of $t$, we are done.
\end{proof}

Similar arguments yield the following.
\begin{lem}\label{lem_r=t}
Suppose there exists a constant $C_2 = C_2(K)>0$ so that Proposition \ref{thm_number_estimate} holds whenever $(\mathcal W,\mathcal B)$ is $2t$-separated. Then there is a constant $C = C(C_2, K)$ so that  Proposition \ref{thm_number_estimate} holds when $(\mathcal W,\mathcal B)$ is $t/A$-separated.
\end{lem}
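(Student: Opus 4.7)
The plan is to adapt the rescaling idea used in the proof of Lemma \ref{lem_c_2}, but combined with a $C^2$-decomposition of $\mathcal W$ and $\mathcal B$. A pure rescaling $t\to t'$ cannot simultaneously turn $t/A$-separation into $2t'$-separation and preserve the diameter hypothesis: the former demands $t'\le t/(2A)$ while the latter demands $t'\ge t$. Thus one must first partition $\mathcal W$ and $\mathcal B$ into sub-collections of smaller $C^2$-diameter, at an intermediate scale $\tau\in[t/A,\,Ct]$, and then invoke the $2t$-separated hypothesis on each sub-pair.

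First I would select $\tau^*\in[t/A,\,Ct]$ by dyadic pigeonholing. For each rectangle $R\in\mathcal R$ and each tangent pair $(w,b)\in \mathcal W_R\times\mathcal B_R$, Lemma \ref{prop_tangent1} forces $\Vert w-b\Vert\lesssim t$, while the hypothesis forces $\Vert w-b\Vert\ge t/A$. Pigeonholing over the $O(\log A)$ dyadic scales in $[t/A,\,Ct]$ and then over $\mathcal R$ yields $\tau^*$ and a subcollection $\mathcal R'\subset\mathcal R$ with $\#\mathcal R'\gtrsim \#\mathcal R/\log A$ such that each $R\in\mathcal R'$ admits $\gtrsim \mu\nu/\log A$ tangent pairs with $\Vert w-b\Vert\sim\tau^*$.

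Next I would cover $\mathcal W$ and $\mathcal B$ by $C^2$-balls of radius $\tau^*/100$, inducing disjoint partitions $\mathcal W=\sqcup_i\mathcal W_i$, $\mathcal B=\sqcup_j\mathcal B_j$. A further pigeonhole over the $\tau^*$-scale tangent pairs of each $R\in\mathcal R'$ selects an index pair $(i(R),j(R))$ such that the centres of the containing balls are at $C^2$-distance $\sim\tau^*$ and the restricted tangent counts remain $\gtrsim \mu/(\log A)^{O(1)}$ and $\gtrsim \nu/(\log A)^{O(1)}$. For each active pair $(i,j)$, the sub-pair $(\mathcal W_i,\mathcal B_j)$ has $C^2$-diameter $\le 3\tau^*$ and is $\tau^*/2$-separated, so the assumed $2T$-separated bound applies with $T=\tau^*/4$; the ratio $\delta/T$ grows from $A$ to $A'=O(A^2)$, contributing an $A^{O(C_2)}$ factor. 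Summing over active $(i,j)$ via the disjointness of $\mathcal W_i$ and $\mathcal B_j$ and the super-additivity $\sum_i x_i^{3/2}\le(\sum_i x_i)^{3/2}$ then recovers the bound with an $A^{O(1)}$ loss.

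The hard part will be the combinatorics of the $(i,j)$-assignment feeding into the final summation: the number of active $(i,j)$-pairs each rectangle contributes to must be controlled so as not to blow up the $A^C$ factor in the conclusion. Lemma \ref{prop_tangent1} confines $\mathcal W_R\cup\mathcal B_R$ to a $C^2$-ball of radius $O(t)$, restricting the active $(i,j)$-pairs for any given $R$ to those whose centres both lie in that $O(t)$-ball at mutual distance $\sim\tau^*$. Careful bookkeeping of the pigeonhole losses at each stage should keep the amplification polynomial in $A$, without any appeal to doubling, consistent with the standing convention of this section.
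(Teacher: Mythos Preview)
Your diagnosis is correct: pure rescaling cannot work (the separation forces $t'\le t/(2A)$ while the diameter forces $t'\ge t$), so a $C^2$-decomposition into sub-pairs is indeed required. The overall architecture you propose is the right one, and with care it can be completed. But two of your quantitative claims do not hold as stated.

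\medskip
\textbf{The $(\log A)^{O(1)}$ loss is not achievable.} In your step 3 you assert that pigeonholing each $R$ to a single cell $(i(R),j(R))$ keeps the restricted tangent counts at $\gtrsim\mu/(\log A)^{O(1)}$ and $\gtrsim\nu/(\log A)^{O(1)}$. But the loss in this pigeonhole is governed by the number of $\tau^*/100$-balls into which the tangent curves $\mathcal W_R$ (resp.\ $\mathcal B_R$) fall, and nothing you have said bounds that number logarithmically. What saves you is that the cinematic condition \eqref{cinematicFunctionCondition} itself forces $\mathcal F$ to be bi-Lipschitz to a subset of $\RR^3$ via $f\mapsto(f(\theta_0),f'(\theta_0),f''(\theta_0))$, so $\mathcal F$ is doubling with constant $O_K(1)$. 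This gives at most $A^{O_K(1)}$ balls, hence an $A^{O_K(1)}$ loss in the tangent counts --- still acceptable for the conclusion, but your closing remark ``without any appeal to doubling'' is not accurate: the argument uses doubling derived from $K$ (which is why the paper's remark says only that the \emph{separate} doubling constant $D$ is not needed). The same $A^{O_K(1)}$ bound is what controls, in your final summation, how many $j$'s can pair with a fixed $i$; super-additivity of $x\mapsto x^{3/2}$ alone does not handle the double sum.

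\medskip
\textbf{The rectangle conversion is missing.} You apply the $2T$-separated hypothesis with $T=\tau^*/4\le t$, which requires $(\delta,T)$-rectangles. These are \emph{longer} than the given $(\delta,t)$-rectangles. Lengthening does not preserve $5$-tangency (the curve may exit $f^{5\delta}$ on the extension) and may merge previously incomparable rectangles. You need an explicit step here: use Lemma~\ref{prop_roughly_const} to show the extended rectangle is $O(t/T)$-tangent, thicken $\delta$ accordingly (costing another $A^{O(1)}$), and then control how many of the original rectangles collapse to a single new one (Lemma~\ref{prop_number_R=O(1)} gives an $A^{O(1)}$ bound). This is the genuine analogue of the refine-then-rescale manoeuvre in Lemma~\ref{lem_c_2}, but run in the opposite direction; it is not free and you have not supplied it.

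\medskip
In short: the decomposition strategy is right, but the losses are $A^{O_K(1)}$ rather than $(\log A)^{O(1)}$, the implicit doubling (from $K$) must be invoked, and the lengthening of rectangles requires its own argument. The paper's terse ``similar arguments'' refers to this circle of ideas; the $\tau^*$-pigeonhole you introduce is in fact unnecessary once one partitions directly at scale $\sim t/A$, since then every sub-pair automatically has diameter and separation of comparable size.
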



\begin{lem}
Suppose there exists a constant $C_3=C_3(K)$ so that Proposition \ref{thm_number_estimate} holds whenever $\mathcal R$ is $2C_c$-incomparable, where $C_c$ is as in Lemma \ref{prop_comparable} below. Then there is a constant $C=C(C_3,K)$ so that Proposition \ref{thm_number_estimate} holds when $\mathcal R$ is $100$-incomparable.
\end{lem}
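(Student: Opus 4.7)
The plan is to mirror the reduction strategy used in Lemma \ref{lem_c_2} and Lemma \ref{lem_r=t}, treating this as another cosmetic strengthening of the incomparability parameter that costs only a constant factor. The key engine is Lemma \ref{prop_C_s_comparable}, which says that any finite $100$-incomparable collection of $(\delta,t)$-rectangles contains a $C$-incomparable subcollection of comparable cardinality, for any $C \geq 100$.

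First, I would invoke Lemma \ref{prop_coarse_bound_R} to assume without loss of generality that $\mathcal R$ is finite (since any bound on finite subcollections passes to the whole collection by monotonicity in $\#\mathcal R$). Next, I would set $C^* = \max(2C_c, 100)$ and apply Lemma \ref{prop_C_s_comparable} to obtain a subcollection $\mathcal R' \subseteq \mathcal R$ that is pairwise $C^*$-incomparable, hence in particular pairwise $2C_c$-incomparable, and satisfies $\#\mathcal R' \gtrsim \#\mathcal R$ with an implicit constant depending only on $K$. The hypotheses of Lemma \ref{prop_C_s_comparable} are verified: the functions $f$ defining rectangles in $\mathcal R$ all lie in $\mathcal W\cup\mathcal B$, which has diameter at most $6t$, so they all sit in a ball of radius $3t$ in $C^2(I)$.

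Next I would observe that the tangency hypotheses are inherited by $\mathcal R'$: every rectangle in $\mathcal R' \subseteq \mathcal R$ is still tangent to at least $\mu$ functions from $\mathcal W$ and at least $\nu$ functions from $\mathcal B$. The sets $\mathcal W, \mathcal B$ and the separation and diameter assumptions are unchanged. Therefore the assumed version of Proposition \ref{thm_number_estimate} (with incomparability constant $2C_c$) applies to $\mathcal R'$, yielding
\[
\#\mathcal R' \leq C_3 A^{C_3}\left(\frac{\#\mathcal W}{\mu}+\frac{\#\mathcal B}{\nu}\right)^{3/2}\log\left(\frac{\#\mathcal W}{\mu}+\frac{\#\mathcal B}{\nu}\right).
\]
Combining with $\#\mathcal R \lesssim \#\mathcal R'$ gives the desired bound for $\mathcal R$ with a constant $C = C(C_3, K)$ that absorbs the loss from Lemma \ref{prop_C_s_comparable}. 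There is essentially no obstacle here; the only thing to be careful about is the case distinction $2C_c < 100$ versus $2C_c \geq 100$, but the former is trivial (every $100$-incomparable family is automatically $2C_c$-incomparable), so the reduction is immediate.
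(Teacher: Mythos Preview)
Your approach is correct and is precisely the ``similar argument'' the paper has in mind: invoke Lemma \ref{prop_C_s_comparable} to extract a $2C_c$-incomparable subcollection of comparable size, then apply the assumed case. One small slip: Proposition \ref{thm_number_estimate} only assumes the defining functions $f$ lie in $\mathcal F$, not in $\mathcal W\cup\mathcal B$, so your verification of the ``ball of radius $3t$'' hypothesis of Lemma \ref{prop_C_s_comparable} is not literally correct as stated; however, since each $R$ is tangent to some $w\in\mathcal W$ one may harmlessly replace $R=f^\delta(I)$ by the comparable rectangle $w^{O(\delta)}(I)$ before applying Lemma \ref{prop_C_s_comparable} (the paper's own proof of Lemma \ref{lem_c_2} glosses over this same point).
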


From now on we will assume $0<\delta\le c_2t$ where $c_2=c_2(K)$ will be determined in Lemma \ref{prop_comparable} below. We also assume $(\mathcal W,\mathcal B)$ is $2t$-separated, and that $\mathcal R$ is $2C_c$-incomparable. 

\medskip

\noindent \emph{Remark.} In the arguments below, we will choose a few constants depending on $K$. The order of dependence is as follows: $c_1$ as in Lemma \ref{prop_two_zeros};  $\lambda$ as in Lemma \ref{lem_random_1}; $c_2$ and $C_c$ as in Lemma \ref{prop_comparable}.

\subsection{Pseudo-circles and lenses}
Now we introduce some terminology from Marcus-Tardos \cite{MarcusTardos} that will be used in the proof of Proposition \ref{thm_number_estimate}.

\begin{defn}
Let $\mathcal C$ be a family of closed Jordan plane curves (i.e.~the homeomorphic image of $S^1$). We say $\mathcal C$ forms a family of pseudo-circles if for every pair of distinct $c,c'\in \mathcal C$, the following conditions are satisfied:
\begin{enumerate}
    \item $\#(c\cap c')\le 2$.
    \item If $p\in c\cap c'$, then $c$ and $c'$ intersect properly at $p$. That is, for all sufficiently small circles $C(p)$ centered at $p$, the points in $C(p)\cap c$ and $C(p)\cap c'$ appear alternatively cyclically counterclockwise. (See Figure \ref{figProperInt} below).
\end{enumerate}
\end{defn}

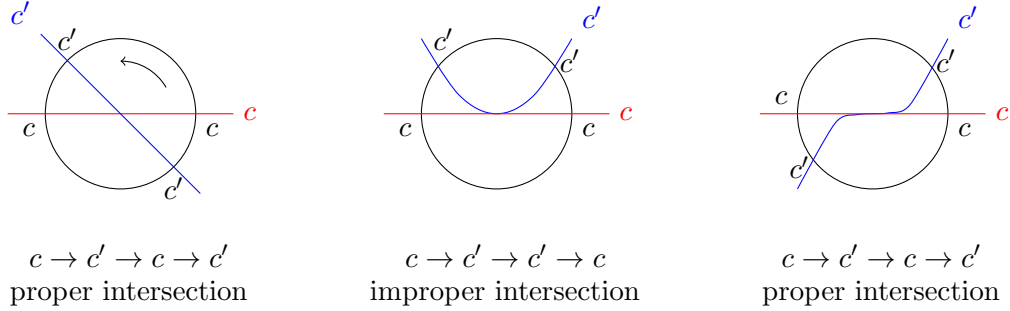
\begin{figure}
\centering
	\begin{tikzpicture}
		\def\r{1}
		\draw (0,0) circle (\r);
		\draw [color=red] plot [smooth, tension=0.6] coordinates { (-1.5,0) (0,0) (1.5,0)} node[right] {$c$};
		\draw [color=blue] plot [smooth, tension=0.6] coordinates { (-1.5/1.414,1.5/1.414) (0,0) (1.5/1.414,-1.5/1.414)};
		\draw [color=blue] (-1.5/1.414,1.5/1.414) node [above left] {$c'$};
	
		\def\rs{0.7}
		\draw[->] (\rs*1.7321/2,\rs/2) arc [start angle = 30, end angle = 90, radius = \rs ];
	
		\draw (1,0) node[below right] {$c$};
		\draw (-1,0) node[below left] {$c$};
		\draw  (-1/1.414,1/1.414) node[above] {$c'$};
		\draw  (1/1.414,-1/1.414) node[below] {$c'$};
	
		\node at (0.5,-1.5,) {$c \rightarrow c' \rightarrow c \rightarrow c'$};
		\node at (0.5,-2,) {\text{proper intersection}};

		\draw (5,0) circle (\r);
		\draw [color=blue] plot [smooth, tension=0.6] coordinates { (4,1) (4.5,0.25) (5,0) (5.5, 0.25) (6,1)} node[above right] {$c'$};
		\draw [color=red] plot [smooth, tension=0.6] coordinates { (3.5,0) (5,0)  (6.5,0)} node[right] {$c$};
	
		\draw (6,0) node[below right] {$c$};
		\draw (4,0) node[below left] {$c$};
		\draw  (5-1/1.414,1/1.414) node [above] {$c'$};
		\draw  (5 + 1/1.414,1/1.414) node[right] {$c'$};

		\node at (5.5,-1.5,) {$c \rightarrow c' \rightarrow c' \rightarrow c$};
		\node at (5.5,-2,) {\text{improper intersection}};
	
		\draw (10,0) circle (\r);
		\draw [color=blue] plot [smooth, tension=0.5] coordinates { (9,-1) (9.5,-0.125)  (9.8,-0.01) (10.2,0.01)     (10.5,0.125) (11,1)} node[above right] {$c'$};
		\draw [color=red] plot [smooth, tension=0.6] coordinates { (8.5,0) (10,0) (11.5,0)} node[right] {$c$};
	
		\draw (11,0) node[below right] {$c$};
		\draw (9,0) node[above left] {$c$};
		\draw  (10+1/1.414,1/1.414) node[right] {$c'$};
		\draw  (10-1/1.414,-1/1.414) node[left] {$c'$};
	
		\node at (10.5,-1.5,) {$c \rightarrow c' \rightarrow c \rightarrow c'$};
		\node at (10.5,-2,) {\text{proper intersection}};
	\end{tikzpicture}
\caption{Proper and improper intersection.}
\label{figProperInt}
\end{figure}


The first condition is key. The second condition was not explicitly stated in \cite{MarcusTardos}, so we included it here for completeness. It is a technical assumption included to make our settings fit rigorously to the discrete geometry scenario in \cite{MarcusTardos}. Of course, in the Kakeya type maximal inequality, if two functions $f,g$ are exactly tangent at some point, then the intersection may be improper. To avoid this issue, we will slightly perturb
the cinematic functions $f$ by $O(\delta)$, such that no exact tangency could occur. This will do no harm, since if $f$ and $g$ satisfy \eqref{cinematicFunctionCondition} and also $\Vert f-g\Vert \geq C\delta$ for a sufficiently large constant $C$, then $f+O(\delta)$ and $g$ will continue to satisfy \eqref{cinematicFunctionCondition} (with $K$ replaced by $2K$).  See Section \ref{sub_perturb} for the details of this perturbation process. 


Now we recall the definition of lenses from \cite{MarcusTardos}.
\begin{defn}
Let $\mathcal C$ be a set of pseudo-circles. Let $c,c'\in \mathcal C$ be two curves that intersect exactly twice, at the points $p,p'$. This divides $c,c'$ into four closed segments, that intersect only at their endpoints. We say $\ell\subset c\cup c'$ is a \emph{lens} if it is a simple closed curve that is the union of two of these segments.
\end{defn}
Note that there are exactly $\binom 4 2=6$ lenses formed by $c,c'$; two of these are the curves $c$ and $c'$ themselves. See \cite[Figure 1]{MarcusTardos}. As a technical aside, we only consider ``first-generation lenses", that is, we ignore lenses formed by lenses. Thus the total number of lenses determined by an arrangement $\mathcal C$ is at most $4\binom{\#\mathcal{C}}{2} + \#\mathcal{C}$.

\begin{defn}[Overlapping lenses]\label{defn_overlap_lense}
Let $\mathcal C$ be a family of pseudo-circles, and let $\ell,\ell'$ be two lenses formed by $\mathcal C$. We say $\ell,\ell'$ \emph{overlap} if they share a segment of positive length (see Figure \ref{figOverlappingLens}).
\end{defn}
Note that if two lenses intersect (properly) at only finitely many points, then they do not overlap.

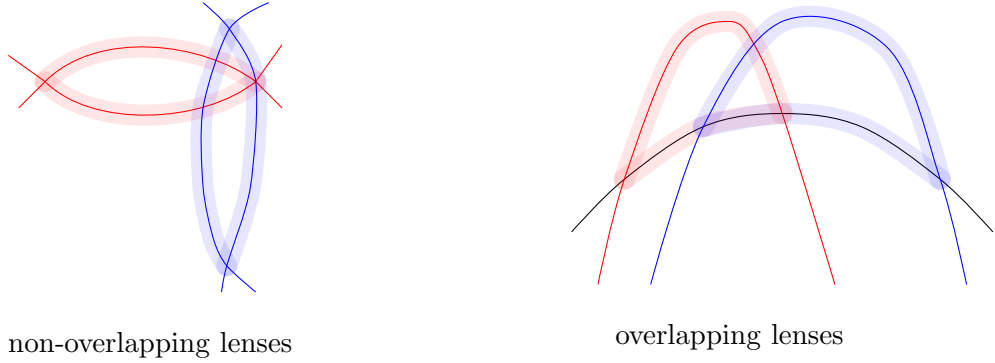
\begin{figure}
\centering
      \begin{tikzpicture}
		\begin{scope}[scale=0.7]
		\draw [color=red] (-0.5,-0.5)
		.. controls (0,0) .. (0,0)
		.. controls (1,1) and (3,0.75) .. (4,0)
		.. controls (4.5,-0.5) .. (4.5,-0.5);
		
		\draw [line width=8,color=red,opacity=0.1, line cap=round]  (0,0)
		.. controls (1,1) and (3,0.75) .. (4,0);
		
		\draw [color=red] (-0.7,0.5)
		.. controls (0,0) .. (0,0)
		.. controls (0.9,-0.9) and (3,-0.8) .. (4,0)
		.. controls (4.5,0.7) .. (4.5,0.7);
	
		\draw [line width=8,color=red,opacity=0.1, line cap=round] (0,0)
		.. controls (0.9,-0.9) and (3,-0.8) .. (4,0);

		\draw [color=blue] plot [smooth, tension=0.5] coordinates {(3,1.5) (3.5,1) (4,0) (3.9, -2) (3.45,-3.5) (3.35,-4)};
		\draw [color=blue, line width=8, opacity=0.1, line cap = round] plot [smooth, tension=0.5] coordinates { (3.5,1) (4,0) (3.9,-2) (3.45,-3.5) };
	
		\draw [color=blue] plot [smooth, tension=0.5] coordinates {(4.25,1.5) (3.5,1) (3,-0.5) (3, -2) (3.2,-3.0) (3.45,-3.5) (4,-4)};
		\draw [color=blue, line width=8, opacity=0.1, line cap=round] plot [smooth, tension=0.5] coordinates {(3.5,1) (3,-0.5) (3, -2) (3.2,-3.0) (3.45,-3.5)};
		
		\node at (2,-5) {$\text{non-overlapping lenses}$};
		\end{scope}
		
		\begin{scope}[shift={(7,-2)}, scale=0.7]	
		\draw [color=black] plot [smooth, tension = 0.6] coordinates {(0,0)  (1,1)  (2.5,2)  (4,2.25)  (5.5,2)  (7,1)  (8,0)};

		\draw [color=red] plot [smooth, tension = 0.6] coordinates {(0.5,-1) (1,1)   (2,3.5)  (3,4) (3.5, 3.5)  (4,2.25) (5,-1)};
		\draw [color=red, line width=8, opacity=0.1, line cap=round] plot [smooth, tension = 0.6] coordinates { (1,1)   (2,3.5)  (3,4) (3.5, 3.5)  (4,2.25) };
		\draw [color=red, line width=8, opacity=0.1, line cap=round] plot [smooth, tension = 0.6] coordinates {  (4,2.25) (2.5,2) (1,1) };

		\draw [color=blue] plot [smooth, tension = 0.6] coordinates {(1.5,-1) (2.5,2) (4,4) (6,3.5) (7,1) (7.5,-1) };
		\draw [color=blue, line width=8, opacity=0.1, line cap=round] plot [smooth, tension = 0.6] coordinates {(2.5,2) (4,4) (6,3.5) (7,1) };
		\draw [color=blue, line width=8, opacity=0.1, line cap=round] plot [smooth, tension = 0.6] coordinates{  (2.5,2)  (4,2.25)  (5.5,2)  (7,1)};
	
		\node at (3,-2) {$\text{overlapping lenses}$};
		\end{scope}
	
	\end{tikzpicture}
\caption{Overlapping and non-overlapping lenses.}
\label{figOverlappingLens}
\end{figure}


We can now state Lemma 10 from \cite{MarcusTardos}.
\begin{thm}\label{thm_main_MT_bound}
Let $\mathcal{C}$ be a family of $n$ pseudo-circles. Then every set of non-overlapping lenses has cardinality $O(n^{3/2} \log n)$. 
\end{thm}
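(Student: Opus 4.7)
My plan is to reduce the lens-counting problem to an extremal graph-theoretic estimate via a forbidden-subgraph argument, and then to exploit the pseudo-circle axioms to rule out large complete bipartite substructures. Since each pair of distinct curves in $\mathcal{C}$ meets in at most two points and therefore supports at most six first-generation lenses (as noted just before the theorem), I first pass from lenses to a multigraph $G$ on vertex set $\mathcal{C}$ in which each non-overlapping lens in $L$ contributes an edge between the two curves whose arcs bound it; here $|L|$ and $|E(G)|$ agree up to a factor of $6$, so it suffices to prove $|E(G)| = O(n^{3/2}\log n)$.

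The approach I would try first is to show that $G$ avoids a complete bipartite subgraph $K_{2,t}$ with $t$ of size $O(\log n)$, and then to invoke the K\H{o}v\'ari--S\'os--Tur\'an theorem (which yields $|E(G)| = O(n^{3/2}\sqrt{t}\,) = O(n^{3/2}\log n)$). Suppose to the contrary that curves $c_1, c_2 \in \mathcal{C}$ together with $d_1, \ldots, d_t$ realise such a $K_{2,t}$, so that for each $i$ there are non-overlapping lenses $\ell_{1,i} \subset c_1 \cup d_i$ and $\ell_{2,i} \subset c_2 \cup d_i$. Parametrising $c_1$ cyclically, the arcs $c_1 \cap \ell_{1,i}$ are pairwise interior-disjoint (by non-overlap) and so determine a cyclic ordering $\pi_1$ of $\{1,\ldots,t\}$; the same holds along $c_2$, yielding an ordering $\pi_2$. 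The pseudo-circle axioms (any two $d_i, d_j$ cross at most twice, and each $d_i$ meets $c_1 \cup c_2$ in at most four points) severely restrict how $\pi_1$ can differ from $\pi_2$: reading the interleaving of the $d_i$-arcs along a carefully chosen closed curve built from $c_1, c_2$ produces a word in the letters $\{d_i\}$ whose repetition pattern is controlled by a Davenport--Schinzel sequence. Once $t$ is large enough, an $abab$-pattern of DS-order $3$ must occur, and this pattern would force two of the $\ell_{j,i}$'s to share a positive-length arc, contradicting non-overlap. The sharp bound $\lambda_3(t) = O(t\log t)$ should then allow $t = O(\log n)$.

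The main obstacle is exactly this last combinatorial-topological step: turning the abstract non-overlap hypothesis into a concrete forbidden-word pattern along a reference curve, and handling the bookkeeping needed to distinguish among the six possible lens configurations per pair while respecting the properness of intersections. It is also the source of the stray logarithm in the conclusion, so removing it appears to require Davenport--Schinzel technology. A cleaner alternative, which I would fall back on if the forbidden-subgraph route became unwieldy, is a recursive cutting argument: use $(1/r)$-cuttings to partition the plane into $O(r^2)$ cells each meeting at most $O(n/r)$ curves, recurse inside each cell, and charge lenses crossing cell boundaries to the arrangement of boundary arcs. With the right choice of $r$ and $O(\log n)$ levels of recursion, this produces the same $n^{3/2}\log n$ bound and sidesteps the most delicate topological comparisons.
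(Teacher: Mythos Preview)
The paper does not prove this statement; it is quoted verbatim as Lemma~10 of Marcus--Tardos \cite{MarcusTardos} and used as a black box. So the question is whether your sketch could substitute for that citation.

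Your primary route fails at the forbidden-$K_{2,t}$ step, and the failure is not a matter of bookkeeping. Take $c_1,c_2$ to be concentric circles of radii $1$ and $2$, and let $d_1,\ldots,d_t$ be circles of radius $0.6$ with centers evenly spaced on the circle of radius $1.5$. Each $d_i$ meets both $c_1$ and $c_2$ in exactly two points, and along $d_i$ the four intersection points occur in cyclic order $c_2,c_1,c_1,c_2$. Hence the short inner arc of $d_i$ (joining its two $c_1$-intersections) and the short outer arc (joining its two $c_2$-intersections) are disjoint. Using these arcs, together with the short arcs of $c_1$ and $c_2$ lying inside $d_i$, one obtains lenses $\ell_{1,i},\ell_{2,i}$ that are pairwise non-overlapping for all $i$ once the $d_i$ are angularly separated. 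Thus $G$ contains $K_{2,t}$ with $t$ linear in $n$, and K\H{o}v\'ari--S\'os--Tur\'an gives nothing beyond the trivial $O(n^2)$. In this example the cyclic orderings $\pi_1$ and $\pi_2$ are identical, so no Davenport--Schinzel pattern is forced; the non-overlap hypothesis simply does not constrain $\pi_1$ against $\pi_2$ in the way you suggest. (Incidentally, $\lambda_3(t)=\Theta(t\alpha(t))$, not $O(t\log t)$.)

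Your fallback via $(1/r)$-cuttings is too vague to evaluate: the phrase ``charge lenses crossing cell boundaries to the arrangement of boundary arcs'' is precisely where the difficulty lies, since a lens can cross many boundaries and there is no evident amortization. The actual Marcus--Tardos argument is different from both of your proposals: it passes through their extremal theorem on \emph{intersection reverse sequences}, recording for each pseudo-circle the cyclic order in which the others meet it, identifying non-overlapping lenses with reversals between these cyclic words, and bounding the total number of reversals. That mechanism, not a forbidden-subgraph or cutting recursion, is what produces the $n^{3/2}\log n$ exponent.
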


\subsection{Lenses and pseudo-parabolas}\label{subsec_extension_pseudo_circle}
We return to our case of a finite family $F\subset \mathcal F$ of cinematic functions. By Lemma \ref{lem_KOV_3.2}, their graphs intersect at most twice, but they are not yet pseudo-circles because they are not closed curves. However, we can extend them to be pseudo-circles in the following way.

We need a mild technical assumption first. Denote $J=[\alpha,\beta]$. We require that both sets
$$
\{f(\alpha):f\in  F\}, \quad \{f(\beta):f\in  F\}
$$
consist of distinct numbers, respectively. We also need that a pair of different cinematic functions $f,g$ never intersect exactly tangentially at any point in $J$. As mentioned before, this can be guaranteed by a careful perturbation; see Section \ref{sub_perturb} for details. Also, we note that all curves are confined in $J\times [-M,M]$ for a finite $M$. We index the curves by $f_{j},1\le j\le \# F$, and rearrange so that $f_j(\alpha)$ is increasing. We first extend each $f_j(\theta)$ to be equal to $f_j(\alpha)$ for $\theta\in [\alpha-j,\alpha]$ and equal to $f_j(\beta)$ for $\theta\in [\beta,\beta+j]$. We then extend by adding the line segments $\{\alpha-j\}\times  [-M-j,f_j(\alpha)]$ and $\{\beta+j\}\times [-M-j,f_j(\beta)]$. Finally, close the loop by adding the line segment $[\alpha-j,\beta+j]\times \{-M-j\}$. In this way we have extended the graphs of the functions in $F$ to a family $\mathcal C$ of pseudo-circles, with all intersections proper. See Figure \ref{figExtension} below.

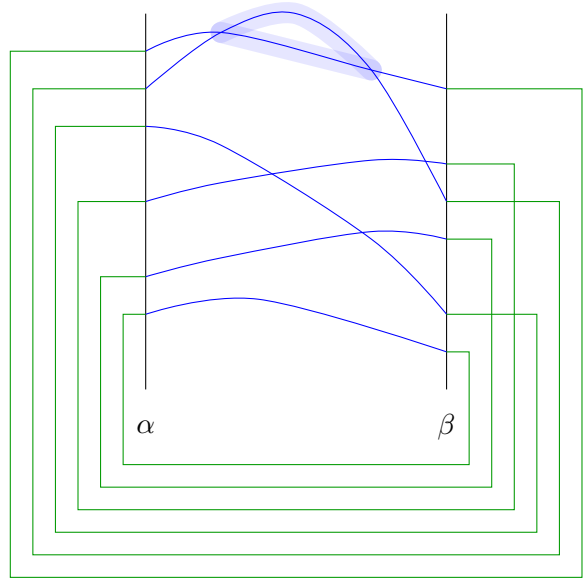
\begin{figure}
	\centering
	\begin{tikzpicture}
	
		\draw [color=black] plot (0,0) -- (0,5);
		\node at (0,-0.5) {$\alpha$};
	
		\draw [color=black] plot (4,0) -- (4,5);
		\node at (4,-0.5) {$\beta$};
	
		\draw [color=blue] plot [smooth, tension=0.6] coordinates { (0,1) (1.5,1.2) (4,0.5) };
		\draw [color=blue] plot [smooth, tension=0.6] coordinates { (0,1.5) (1,1.75) (3,2.1) (4,2) };
		\draw [color=blue] plot [smooth, tension=0.6] coordinates { (0,2.5) (1,2.75)  (3,3.05) (4,3) };
		\draw [color=blue] plot [smooth, tension=0.6] coordinates { (0,3.5) (1,3.25) (3,2)  (4,1) };
		\draw [color=blue] plot [smooth, tension=0.6] coordinates { (0,4) (1,4.75) (2,5) (3,4.25) (4,2.5) };
		\draw [color=blue] plot [smooth, tension=0.6] coordinates { (0,4.5) (1,4.75)  (3,4.25)  (4,4) };
	
		\draw [color=blue, line width=8, opacity=0.1, line cap=round] plot [smooth, tension=0.6] coordinates { (1,4.75) (2,5) (3,4.25)  };
		\draw [color=blue, line width=8, opacity=0.1, line cap=round] plot [smooth, tension=0.6] coordinates {  (1,4.75)  (3,4.25)  };
	
		\def\x{0.3}
		\draw [ color=darkgreen] (0,1) -- (-\x,1) -- (-\x,-1) -- (4+\x,-1) -- (4+\x,0.5) -- (4,0.5);
		\draw [ color=darkgreen] (0,1.5) -- (-2*\x,1.5) -- (-2*\x,-1-\x) -- (4+2*\x,-1-\x) -- (4+2*\x,2) -- (4,2);
		\draw [ color=darkgreen] (0,2.5) -- (-3*\x,2.5) -- (-3*\x,-1-2*\x) -- (4+3*\x,-1-2*\x) -- (4+3*\x,3) -- (4,3);
		\draw [ color=darkgreen] (0,3.5) -- (-4*\x,3.5) -- (-4*\x,-1-3*\x) -- (4+4*\x,-1-3*\x) -- (4+4*\x,1) -- (4,1);
		\draw [ color=darkgreen] (0,4) -- (-5*\x,4) -- (-5*\x,-1-4*\x) -- (4+5*\x,-1-4*\x) -- (4+5*\x,2.5) -- (4,2.5);
		\draw [ color=darkgreen] (0,4.5) -- (-6*\x,4.5) -- (-6*\x,-1-5*\x) -- (4+6*\x,-1-5*\x) -- (4+6*\x,4) -- (4,4);
	
	\end{tikzpicture}
	\caption{Extending the graph of $f \in F$ to a closed curve.}
	\label{figExtension}
\end{figure}


We observe that if two functions $f,g$ intersect exactly twice in the strip $J\times \R$, then they give rise to exactly one lens $\ell$ that lies entirely in the strip $J\times \R$. See Figure \ref{figExtension}. Denote the collection of these lenses by $\mathcal L$. They are the only lenses we are interested in.

Now we explain the connection between overlapping lenses and curve tangencies.
\begin{lem}\label{prop_comparable}
If $\delta/t\le c_2\le c_1$ (where $c_1$ is as in Part \eqref{item_1} of Lemma \ref{prop_two_zeros}) for some small enough constant $c_2=c_2(K)$, then the following holds. Let $w,b_1,b_2\in \mathcal F$. For $i=1,2$, let $t_i=\Vert w-b_i\Vert$. Suppose $t_i\geq t$, and that $w$ and $b_i$ intersect exactly twice in $J$, thus forming a lens $\ell_i$ that lies over some subinterval of $J$. Let $R_i$ be a $(\delta,t)$-rectangle that is $2\lambda$-tangent to both $w$ and $b_i$, where $\lambda=\lambda(K)$ is to be determined in Lemma \ref{lem_random_1}.

Suppose that $\ell_1$ and $\ell_2$ overlap. Then $R_1$ and $R_2$ are $C_c$-comparable, where $C_c$ ($c$ stands for ``comparable") depends on $\lambda,K$.
\end{lem}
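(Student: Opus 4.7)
My plan starts with a topological observation. Each lens decomposes as $\ell_i = w|_{[\alpha_i,\beta_i]} \cup b_i|_{[\alpha_i,\beta_i]}$, the two arcs meeting at the two intersection points of $w$ with $b_i$. A positive-length overlap between $\ell_1$ and $\ell_2$ must therefore be a shared subsegment either of the two $w$-arcs or of the $b_1$-arc and the $b_2$-arc. The second alternative is ruled out by Lemma \ref{lem_KOV_3.2}: since $b_1-b_2$ has at most two zeros in $J$, the graphs of $b_1$ and $b_2$ cannot agree on any set of positive length. So the overlap lives along $w$, and this is equivalent to saying that $[\alpha_1,\beta_1] \cap [\alpha_2,\beta_2]$ is an interval of positive length.

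Next I will extract geometric information from the tangency hypotheses. Applied to the pair $(w,b_i)$ and the tangent rectangle $R_i$, Lemma \ref{prop_tangent1} together with $t_i\geq t$ forces $\Delta(w,b_i)\lesssim \delta$ and $t_i\sim t$. Since $\delta/t\leq c_2\leq c_1$, I then invoke Part \ref{item_1} of Lemma \ref{prop_two_zeros} on $h = w-b_i$: there is a unique critical point $\theta_0^{(i)}\in \tfrac{3}{5}J$, and the two zeros $\alpha_i,\beta_i$ of $h$ both lie within $O(\sqrt{\Delta(w,b_i)/t_i})=O(\sqrt{\delta/t})$ of $\theta_0^{(i)}$. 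Combining this with the conclusion of the previous paragraph (that $[\alpha_1,\beta_1]\cap[\alpha_2,\beta_2]$ is nonempty) yields
\[
|\theta_0^{(1)} - \theta_0^{(2)}| \lesssim \sqrt{\delta/t}.
\]

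To finish I pin down the base intervals of the rectangles. Write $R_i = g_i^\delta(K_i)$ with $|K_i|=\sqrt{\delta/t}$ and (as in the enclosing Proposition \ref{thm_number_estimate}) $K_i\subset J/4$. The $2\lambda$-tangency of $R_i$ to both $w$ and $b_i$ gives $|w(\theta)-b_i(\theta)|\leq 6\lambda\delta$ for every $\theta\in K_i$, so Part \ref{item_2a} of Lemma \ref{prop_two_zeros} (applied with tolerance $6\lambda\delta$, valid once $c_2$ is chosen small enough in terms of $K$ and $\lambda$ so that $6\lambda\delta\leq t_i/(6K)$) places $K_i$ inside an interval of length $O(\sqrt{\lambda\delta/t})$ centered near $\theta_0^{(i)}$. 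Combined with the previous display, the convex hull $I_0$ of $K_1\cup K_2$ is an interval in $J$ of length $O(\sqrt{\lambda\delta/t})$, and on $I_0$ we still have $R_1\cup R_2 \subset w^{2\lambda\delta}$. Hence $R_1\cup R_2 \subset w^{C_c\delta}(I_0)$, which is a $(C_c\delta,t)$-rectangle, for any $C_c$ of size $\gtrsim \lambda$; this is exactly $C_c$-comparability. The only real obstacle is constant bookkeeping: verifying that $c_2=c_2(K)$ can be chosen small enough so that Parts \ref{item_1} and \ref{item_2} of Lemma \ref{prop_two_zeros} apply simultaneously with the tolerances above. Once this is arranged, the geometric picture---an overlap of lenses along $w$ pins the two near-tangent points $\theta_0^{(i)}$ together, which in turn forces the two tangency rectangles to nest inside a single dilated curvilinear rectangle over $w$---completes the proof.
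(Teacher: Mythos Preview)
Your proof is correct and follows essentially the same route as the paper's: use Lemma~\ref{prop_tangent1} to get $\Delta(w,b_i)\lesssim\delta$, invoke Part~\ref{item_1} of Lemma~\ref{prop_two_zeros} to locate the zeros $\alpha_i,\beta_i$ within $O(\sqrt{\delta/t})$ of the critical point $\theta_0^{(i)}$, use the lens overlap to force $|\theta_0^{(1)}-\theta_0^{(2)}|\lesssim\sqrt{\delta/t}$, and then apply Part~\ref{item_2a} to trap each base interval $K_i$ near $\theta_0^{(i)}$.

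Your opening paragraph is a genuine addition: the paper simply asserts $(\alpha_1,\beta_1)\cap(\alpha_2,\beta_2)\neq\varnothing$ without justification, whereas you supply the topological reason. One small refinement: besides the $w$--$w$ and $b_1$--$b_2$ pairings you name, a shared segment could in principle be between the $w$-arc of one lens and the $b_j$-arc of the other; these cross cases are ruled out by the same Lemma~\ref{lem_KOV_3.2} argument (since $w\neq b_j$ in $\mathcal F$), so the conclusion stands.
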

\begin{proof}
For $i=1,2$, define the interval $I_i=[\alpha_i,\beta_i]\subset J$ so that $R_i$ is a curvilinear rectangle over $I_i$. Since $R_1$, $R_2$ are both tangent to $w$, it suffices to show that $\diam(I_1\cup I_2)=O(\sqrt{\delta/t})$.

By Lemma \ref{prop_tangent1}, we have $\Delta(w,b_i)\lesssim \delta$. If $\delta/t$ is small enough, then $\Delta(w,b_i)\le c_1 t$. Thus there is a unique $\theta^{(i)}_0\in \frac 3 5 J$ such that $|\alpha_i-\theta^{(i)}_0|=O(\sqrt{\delta/t_i)}=O(\sqrt{\delta/t)}$ and $|\beta_i-\theta^{(i)}_0|=O(\sqrt{\delta/t})$. Since $(\alpha_1,\beta_1)\cap (\alpha_2,\beta_2)\neq \varnothing$, we have $\theta^{(2)}_0-\theta^{(1)}_0=O(\sqrt{\delta/t})$.

Now we study $I_i$. Using the triangle inequality, for $\theta\in I_i$ we have $|w(\theta)-b_i(\theta)|\lesssim \delta$. By Part \eqref{item_2a} of Lemma \ref{prop_two_zeros}, $I_i$ is contained in an interval of length $O(\sqrt{\delta/t})$ centered at $\theta^{(i)}_0$. But since $\theta^{(2)}_0-\theta^{(1)}_0=O(\sqrt{\delta/t})$, we see that $\diam(I_1\cup I_2)=O(\sqrt{\delta/t})$, as required.
\end{proof}

\subsection{Rectangle tangency bounds for properly intersecting curves}
We first prove a special case of Proposition \ref{thm_number_estimate}, where the curves intersect properly.
\begin{lem}\label{prop_number_estimate0}
Let $\mathcal{F}$ be a family of cinematic functions, with cinematic constant $K$. Then there is a constant $C=C(K)>0$ so that the following holds.

Let $\delta,t>0$ with $0<\delta\leq  c_2 t$ (here $c_2$ is the quantity from Lemma \ref{prop_comparable}).  Let $\mathcal W,\mathcal B\subset \mathcal F$ be a pair of $t$-separated finite sets. 

Let $\mathcal R$ be a collection of pairwise $C_c$-incomparable $(\delta,t)$-rectangles of the form $f^\delta(I)$, where $f\in \mathcal F$, $I\sub J/4$ and $C_c$ is as in Lemma \ref{prop_comparable}. Suppose each rectangle $R\in \mathcal{R}$ is tangent to at least one $w_R\in \mathcal W$ and at least one $b_R\in\mathcal B$. Suppose furthermore that the following holds.

\begin{enumerate}
    \item[A1.]  For each $R\in \mathcal R$, we have that $w_R,b_R$ intersect exactly twice in $J$.
    \item[A2.] If $J=[\alpha,\beta]$, then the sets
$$
\{f(\alpha):f\in \mathcal W\cup \mathcal B\}, \quad \{f(\beta):f\in \mathcal W\cup \mathcal B\}
$$
respectively consist of distinct elements.
\item[A3.] For all $f\neq g\in \mathcal W\cup \mathcal B$, $f,g$ never intersect tangentially in $J$, that is, the two quantities $f(\theta)-g(\theta)$ and $f'(\theta)-g'(\theta)$ are never simultaneously $0$ for any $\theta\in J$.
\end{enumerate}

Then
\begin{equation}\label{boundOn11R}
\#\mathcal R\leq C \left(\#\mathcal W +\#\mathcal B \right)^{3/2}\log \left(\#\mathcal W +\#\mathcal B\right).
\end{equation}
\end{lem}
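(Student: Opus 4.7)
The plan is to set up a map from $\mathcal R$ to the family of lenses associated to the pseudo-circle extension of $\mathcal W \cup \mathcal B$, and then invoke the Marcus--Tardos bound (Theorem \ref{thm_main_MT_bound}) after checking that incomparability of the rectangles forces the corresponding lenses to be non-overlapping.

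First I would use the construction from Section \ref{subsec_extension_pseudo_circle}: assumptions A2 and A3 are exactly what is needed to extend the graphs of the functions in $\mathcal W \cup \mathcal B$ to a family $\mathcal C$ of closed Jordan curves that forms a family of pseudo-circles with all intersections proper. By Lemma \ref{lem_KOV_3.2} each pair of graphs meets at most twice inside the strip $J \times \RR$, and by our construction the extensions outside $J \times \RR$ contribute no extra intersections (the vertical tails and horizontal closing segment for different curves are nested). Thus $\#\mathcal C = \#\mathcal W + \#\mathcal B$, and for every $R \in \mathcal R$ the pair $(w_R, b_R)$ intersects in exactly two points of $J \times \RR$ (by A1), producing a unique lens $\ell_R \in \mathcal L$ supported over an interval $I_R \subset J$.

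Next I would verify that the map $R \mapsto \ell_R$ has the key non-overlap property: if $R_1 \neq R_2$ lie in $\mathcal R$, then $\ell_{R_1}$ and $\ell_{R_2}$ do not overlap. This is the contrapositive of Lemma \ref{prop_comparable}. Concretely, since $\delta \leq c_2 t$ and each $R_i$ is tangent (hence $2\lambda$-tangent) to both $w_{R_i}$ and $b_{R_i}$, Lemma \ref{prop_comparable} applies with $t_i = \Vert w_{R_i} - b_{R_i}\Vert \geq t$ (by the $t$-separation hypothesis on $(\mathcal W, \mathcal B)$), so overlap of $\ell_{R_1}$ and $\ell_{R_2}$ would force $C_c$-comparability of $R_1, R_2$, contradicting the hypothesis. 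Therefore $\{\ell_R : R \in \mathcal R\}$ is a family of pairwise non-overlapping lenses in $\mathcal C$.

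The map $R \mapsto \ell_R$ need not be injective, since several rectangles could be $C_c$-incomparable yet correspond to the same pair $(w_R, b_R)$ and hence to the same lens. However, a given lens $\ell$ is determined by its underlying pair of curves, and the rectangles in $\mathcal R$ assigned to this pair must all be tangent to the same two functions $w, b$. Applying Lemma \ref{prop_number_R=O(1)} (or equivalently the tangency bound of Lemma \ref{prop_tangent1} together with Lemma \ref{prop_number_R=O(1)}) shows that all such rectangles are contained in a common $(O(\delta), t)$-rectangle and are pairwise $C_c$-incomparable, so there are only $O(1)$ of them. Thus the preimage of each lens has bounded size, and
\[
\#\mathcal R \;\lesssim\; \#\{\ell_R : R \in \mathcal R\}.
\]
Finally, applying Theorem \ref{thm_main_MT_bound} to the pseudo-circle family $\mathcal C$ of cardinality $n = \#\mathcal W + \#\mathcal B$ bounds the number of non-overlapping lenses by $O(n^{3/2} \log n)$, giving \eqref{boundOn11R}.

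The main obstacle I expect is bookkeeping around the lens-to-rectangle multiplicity: verifying that the fiber of $R \mapsto \ell_R$ really is $O_K(1)$ and does not absorb the entire quantitative content of the bound. This reduces to checking that any collection of $C_c$-incomparable $(\delta, t)$-rectangles all tangent to a fixed pair $(w, b)$ is confined to a single $(O(\delta), t)$-rectangle (via Lemma \ref{prop_tangent1}), so that Lemma \ref{prop_number_R=O(1)} applies with a constant dilation factor. Once this combinatorial step is secured, the proof is a direct translation between the rectangle-tangency language and the lens language of Marcus--Tardos.
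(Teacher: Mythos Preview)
Your proposal is correct and follows essentially the same route as the paper: extend $\mathcal W\cup\mathcal B$ to pseudo-circles, assign to each $R$ the lens $\ell_R$ of its $(w_R,b_R)$ pair, use Lemma~\ref{prop_comparable} to deduce non-overlap from $C_c$-incomparability, and apply Theorem~\ref{thm_main_MT_bound}.

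One simplification: your separate paragraph bounding the fibers of $R\mapsto\ell_R$ by $O(1)$ via Lemma~\ref{prop_number_R=O(1)} is unnecessary. Non-overlap already gives injectivity for free: if $\ell_{R_1}=\ell_{R_2}$ then the two lenses share every arc, so in particular they overlap in the sense of Definition~\ref{defn_overlap_lense}, and your own application of Lemma~\ref{prop_comparable} then forces $R_1,R_2$ to be $C_c$-comparable, a contradiction. The paper simply states that the map is a bijection onto its image and moves on.
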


\begin{proof}
With the additional assumptions A1--A3, each $R\in \mathcal R$ is tangent to a pair $(w_R,b_R)$ intersecting exactly twice in $J$. After extending them to pseudo-circles in the way described in Section \ref{subsec_extension_pseudo_circle}, we see the pair $(w_R,b_R)$ gives rise to a lens $\ell_R$ in the collection $\mathcal L$.

By Lemma \ref{prop_comparable}, the lenses in $\mathcal L=\{\ell_R:R\in \mathcal R\}$ are non-overlapping. Since the mapping $R\mapsto \ell_R$ is clearly a bijection, using Theorem \ref{thm_main_MT_bound}, we conclude that
\[
\#\mathcal R=\#\mathcal L\lesssim (m+n)^{3/2}\log (m+n).\qedhere
\]
\end{proof}

\subsection{Perturbing the curves}\label{sub_perturb}
Our next step is to remove the additional assumptions A1--A3 in Lemma \ref{prop_number_estimate0} by perturbing the functions in $\mathcal W$ and $\mathcal B$. Namely, we would like to prove the following version of Proposition \ref{thm_number_estimate}.

\begin{lem}\label{prop_number_estimate1}
Let $\mathcal{F}$ be a family of cinematic functions, with cinematic constant $K$. Then there is a constant $C=C(K)>0$ so that the following holds.

Let $\delta,t>0$ with $0<\delta\leq  c_2 t/2$ (here $c_2$ is the quantity from Lemma \ref{prop_comparable}). Let $\mathcal W,\mathcal B\subset \mathcal F$ be a pair of $2t$-separated finite sets.

Let $\mathcal R$ be a collection of pairwise $2C_c$-incomparable $(\delta,t)$-rectangles of the form $f^\delta(I)$ where $f\in \mathcal F$ and $I\sub J/4$. Suppose each rectangle in $\mathcal{R}$ is tangent to at least one element of $\mathcal W$ and at least one element of $\mathcal B$.

Then
\begin{equation}\label{boundOn11RPerturbed}
\#\mathcal R\leq C \left(\#\mathcal W +\#\mathcal B \right)^{3/2}\log \left(\#\mathcal W +\#\mathcal B\right).
\end{equation}
\end{lem}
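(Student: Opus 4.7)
The plan is to deduce Lemma~\ref{prop_number_estimate1} from Lemma~\ref{prop_number_estimate0} by a random perturbation argument. For each $f \in \mathcal W \cup \mathcal B$, I would pick an independent random constant $a_f$ from the uniform distribution on $[-K_0 \delta, K_0 \delta]$, where $K_0 = K_0(K)$ is a constant to be chosen below. Writing $\widetilde f = f + a_f$ and letting $\widetilde{\mathcal W}, \widetilde{\mathcal B}$ be the perturbed families, the goal is to show that some realization of these shifts makes the hypotheses A1--A3 of Lemma~\ref{prop_number_estimate0} hold for a positive fraction of the rectangles in $\mathcal R$.

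Since constant shifts preserve derivatives, $\Vert \widetilde f - \widetilde g \Vert$ differs from $\Vert f - g \Vert$ by at most $2K_0 \delta$. Therefore $(\widetilde{\mathcal W}, \widetilde{\mathcal B})$ is $t$-separated (using $2t$-separation and $2K_0\delta \le t$ for $\delta$ small), each rectangle $R \in \mathcal R$ remains tangent to $\widetilde{w_R}$ and $\widetilde{b_R}$ after inflating the tangency constant by at most $K_0$, and the $2C_c$-incomparability buffer keeps $\mathcal R$ sufficiently incomparable for Lemma~\ref{prop_number_estimate0}. Conditions A2 and A3 are generic: with probability one the boundary value sets $\{\widetilde f(\alpha)\}$ and $\{\widetilde f(\beta)\}$ are distinct (A2), and for each pair $f \ne g$ the set of perturbations producing a tangential intersection in $J$ lies in a lower-dimensional subvariety of $(a_f, a_g)$-space, so A3 holds almost surely.

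The main step is to prove that A1 holds with positive probability for each fixed $R \in \mathcal R$. By Lemma~\ref{prop_tangent1} we have $\Delta(w_R, b_R) \lesssim \delta$; combined with $\Vert w_R - b_R\Vert \ge 2t$ and $\delta \le c_2 t/2$, Part~\ref{item_1} of Lemma~\ref{prop_two_zeros} applies to $h = w_R - b_R$. This yields a unique critical point $\theta_0 \in \tfrac{3}{5}J$ with $|h(\theta_0)| \le C_0 \delta$ for an absolute $C_0$, and $h''$ of constant sign with $|h''| \gtrsim t$ on $J$. The perturbed difference $\widetilde h = h + (a_{w_R} - a_{b_R})$ has the same second derivative, hence the same critical point $\theta_0$, with value $\widetilde h(\theta_0) = h(\theta_0) + (a_{w_R} - a_{b_R})$. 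By Part~\ref{item_1b} of Lemma~\ref{prop_two_zeros} and $\delta \ll t$, $|\widetilde h| \gtrsim t$ on $J \setminus \tfrac{4}{5}J$, so $\widetilde h$ has exactly two zeros in $J$ precisely when $\widetilde h(\theta_0)$ has sign opposite to $h''$. Since $a_{w_R} - a_{b_R}$ is triangularly distributed on $[-2K_0\delta, 2K_0\delta]$ with density bounded below on $[-K_0\delta, K_0\delta]$, choosing $K_0 = 2C_0$ gives
\[ \Pr\bigl[\operatorname{sign}(\widetilde h(\theta_0)) \neq \operatorname{sign}(h'')\bigr] \ge c_0 \]
for an absolute $c_0 > 0$ (a direct computation yields $c_0 \ge 9/32$).

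By linearity of expectation, some deterministic choice of $(a_f)_{f \in \mathcal W \cup \mathcal B}$ realizes A2, A3, and a subfamily $\mathcal R' \subseteq \mathcal R$ with $\#\mathcal R' \ge \tfrac{c_0}{2} \#\mathcal R$ for which $(\widetilde{w_R}, \widetilde{b_R})$ intersects exactly twice in $J$. Applying Lemma~\ref{prop_number_estimate0} to $(\widetilde{\mathcal W}, \widetilde{\mathcal B}, \mathcal R')$ then gives $\#\mathcal R' \le C(\#\mathcal W + \#\mathcal B)^{3/2}\log(\#\mathcal W + \#\mathcal B)$, which implies the desired bound with constant $2C/c_0$. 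The main obstacle is verifying A1: a small random shift must reliably convert a near-tangency of $w_R$ and $b_R$ into two transverse intersections, and this hinges on the simultaneous upper bound $|h(\theta_0)| \lesssim \delta$ (so a shift of size $\delta$ can flip the sign of the minimum/maximum) and the lower bound $|h''| \gtrsim t$ (so the total zero count never exceeds two), both supplied by the structural analysis in Lemma~\ref{prop_two_zeros}.
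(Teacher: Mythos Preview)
Your strategy---perturb by $O(\delta)$ to force near-tangent pairs $(w_R,b_R)$ to intersect exactly twice, then invoke Lemma~\ref{prop_number_estimate0}---matches the paper's, and your analysis of condition A1 via Lemma~\ref{prop_two_zeros} is correct and essentially the content of the paper's Lemma~\ref{lem_random_1}. The difference is in how the perturbation is organised: the paper shifts \emph{all of} $\mathcal W$ by a single constant $\eta\lambda\delta$ with $\eta\in\{-1,0,1\}$ and pigeonholes over these three choices (Corollary~\ref{choiceOfShift}), whereas you shift each function in $\mathcal W\cup\mathcal B$ independently and average.

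This creates a gap. Lemma~\ref{prop_number_estimate0} requires $\widetilde{\mathcal W}\cup\widetilde{\mathcal B}$ to sit inside a family of cinematic functions, because the pseudo-circle extension in Section~\ref{subsec_extension_pseudo_circle} needs \emph{every} pair of curves---not only the cross pairs $(w_R,b_R)$---to intersect at most twice (Lemma~\ref{lem_KOV_3.2}) and to do so properly. A uniform shift preserves all pairwise differences within $\mathcal W$, so the cinematic inequality \eqref{cinematicFunctionCondition} survives intact there, and the $2t$-separation handles cross pairs; this is exactly what the paper checks after Corollary~\ref{choiceOfShift}. With independent shifts, however, two functions $w,w'\in\mathcal W$ at distance $\Vert w-w'\Vert\lesssim\delta$ acquire a relative shift $a_w-a_{w'}$ comparable to $\Vert w-w'\Vert$; when the alternative (L) of Lemma~\ref{lem_KOV_3.1} holds only for $h=w-w'$ (so $h',h''$ are uniformly small relative to $\Vert h\Vert$), the shifted difference $\widetilde h=h+(a_w-a_{w'})$ can vanish without any lower bound on $|\widetilde h'|+|\widetilde h''|$, and nothing in your argument rules out $\widetilde h$ having three or more zeros on $J$. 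That would break the pseudo-circle property and with it the appeal to Theorem~\ref{thm_main_MT_bound}. The fix is simple and is what the paper does: perturb only $\mathcal W$, and by a common constant, so that within-family differences are untouched; the three-value pigeonhole then replaces your expectation argument.
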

The reduction from Lemma \ref{prop_number_estimate0} to Lemma \ref{prop_number_estimate1} requires two perturbations. The first perturbation (which has size roughly $\delta$) ensures that property A1 holds for a considerable fraction of the tangency rectangles in $\mathcal{R}$. The second perturbation (which is infinitesimally small) ensures that properties A2 and A3 hold.

\begin{lem}\label{lem_random_1}
Let $0<\delta<c_1t$ and let $f,g\in\mathcal{F}$ with $\Vert f-g\Vert \geq 2t$. Suppose $f$ and $g$ are tangent to a common $(\delta,t)$-rectangle $R$ over $J/4$. If $\lambda=\lambda(K)\ge 1$ is chosen to be large enough, then there exists $\eta \in \{-1,0,1\}$ so that the function $h = f + \eta \lambda\delta$ is $2\lambda$-tangent to $R$, and the functions $g$ and $h$ intersect exactly twice in $J$.
\end{lem}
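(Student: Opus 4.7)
\textbf{Proof plan for Lemma \ref{lem_random_1}.} The $2\lambda$-tangency of $h = f + \eta\lambda\delta$ to $R$ is automatic for every $\eta\in\{-1,0,1\}$ as soon as $\lambda \geq 5$: if $R\subset f^{5\delta}$, then for any $(\theta,y)\in R$, $|h(\theta)-y|\leq 5\delta + \lambda\delta \leq 2\lambda\delta$. So the entire content of the lemma is to exhibit a choice of $\eta$ making $k_\eta := h-g = (f-g) + \eta\lambda\delta$ have exactly two zeros in $J$.

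Set $k=f-g$ and $T=\Vert f-g\Vert\geq 2t$. The first task is to argue that, among the alternatives $(S)/(L)$ supplied by Lemma \ref{lem_KOV_3.1} (with $\kappa = K/3$) for each of $k,k',k''$, it must be $(L)$ that holds for $k''$. The joint tangency of $f,g$ to $R$ forces $|k|\leq 12\delta$ on the interval $I\subset J/4$ of $R$ (which has length $\sqrt{\delta/t}$). This immediately excludes $(L)$ for $k$, which would give $|k|\geq KT/6 \gtrsim t \gg \delta$ throughout $J$. It likewise excludes $(L)$ for $k'$, since that would make $k$ monotone with $|k'|\geq KT/6\gtrsim t$, so $k$ would vary by $\gtrsim t\sqrt{\delta/t}=\sqrt{\delta t}\gg \delta$ across $I$. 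Since $\mathcal F$ is cinematic, at least one of the three alternatives must be $(L)$; hence $(L)$ holds for $k''$, and after possibly interchanging $f$ and $g$ we may assume $k''(\theta)\geq Kt/3$ for all $\theta\in J$.

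Now Lemma \ref{prop_tangent1} gives $\Delta(f,g)\,T\lesssim \delta t$, so $\Delta(f,g)\lesssim \delta$ and in particular $\Delta(f,g)\leq c_1 T$. Part (1) of Lemma \ref{prop_two_zeros} then yields a unique critical point $\theta_0\in \tfrac{3}{5}J$ of $k$ with $|k(\theta_0)|\leq C_0\delta$ for some $C_0=C_0(K)$, together with a one-sided boundary estimate $k(\theta)\geq c\,T\gtrsim t$ on $J\setminus \tfrac{4}{5}J$ (the sign is determined by $k''>0$, since $\theta_0$ is a minimum). Choose $\lambda=\lambda(K)\geq \max(5,\,2C_0+2)$ and take $\eta=-1$. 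Then $k_{-1}=k-\lambda\delta$ is still strictly convex with the same unique minimum at $\theta_0$, and $k_{-1}(\theta_0)\leq C_0\delta-\lambda\delta<0$; meanwhile on $J\setminus \tfrac{4}{5}J$ we have $k_{-1}\geq ct-\lambda\delta>0$ provided $c_1$ has been taken small enough (depending on $\lambda$) so that $\lambda\delta\leq \lambda c_1 t<ct/2$, which we may impose from the outset. By convexity and the intermediate value theorem, $k_{-1}$ has exactly two zeros in $J$, and they are transverse because $k_{-1}'$ vanishes only at $\theta_0$. The case $k''\leq -Kt/3$ is symmetric, handled by $\eta=+1$. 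The main obstacle is the case analysis in the second paragraph, excluding alternatives $(L)$ for $k$ and $k'$; once these are ruled out, the strict convexity from Lemma \ref{prop_two_zeros} renders the perturbation argument essentially automatic.
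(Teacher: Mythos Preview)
Your proof is correct and follows essentially the same strategy as the paper: use the joint tangency (via Lemma~\ref{prop_tangent1}) to force $\Delta(f,g)\lesssim\delta$, invoke Part~1 of Lemma~\ref{prop_two_zeros} to obtain the unique critical point $\theta_0$ with $|k(\theta_0)|\lesssim\delta$ and the boundary positivity, then shift by $\pm\lambda\delta$ to create a sign change which, by strict convexity, yields exactly two zeros.

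The paper organizes the endgame slightly differently: it splits into three cases according to the sign behaviour of $f-g$ on $J$, taking $\eta=0$ when $f-g$ already changes sign and $\eta=\mp1$ otherwise, and then appeals directly to Part~1b of Lemma~\ref{prop_two_zeros} (which, since $(h-g)''=(f-g)''$, applies to the shifted function as well) rather than rederiving convexity. Your version always perturbs and argues convexity by hand, which is equally valid and perhaps more self-contained; the paper's case split is marginally cleaner because the $\eta=0$ branch needs no constant-juggling at all.

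Two small remarks. First, your second paragraph (ruling out alternative (L) for $k$ and $k'$) is correct but redundant: once you invoke Lemma~\ref{prop_tangent1} and Part~1 of Lemma~\ref{prop_two_zeros} in the third paragraph, the fact that (L) holds for $k''$ is already built in (see the first lines of the proof of Part~1). Second, your requirement that $c_1$ be taken small depending on $\lambda$ reverses the paper's stated order of constant selection ($c_1$ before $\lambda$). This is harmless in substance---the constants $c$ and $C_0$ from Lemma~\ref{prop_two_zeros} do not deteriorate as $c_1$ shrinks, so one may indeed shrink $c_1$ at the outset---but it would be cleaner to phrase the hypothesis as ``$\delta/t$ sufficiently small depending on $K$,'' which is how the lemma is actually applied downstream (via $c_2$ in Lemma~\ref{prop_number_estimate1}).
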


\begin{proof}
By Part \eqref{item_1b} of Lemma \ref{prop_two_zeros}, it suffices to show that for a suitable $\eta\in \{-1,0,1\}$, we have $k=h-g$ changes sign within $J$. We have three cases. If $f-g$ changes sign in $J$, then we just take $\eta=0$. Otherwise, we must have $f-g\ge 0$ on $J$ or $f-g\le 0$ on $J$. In the former case, we take $\eta=-1$, and so $k=f-\lambda\delta-g$. Also, by Lemma \ref{prop_tangent1}, we have $\Delta(f,g)\lesssim \delta$, and thus $\min_{\theta\in J}f(\theta_0)-g(\theta_0)\leq C' \delta$ for some $C'=C'(K)$, using Part \eqref{item_1a} of Lemma \ref{prop_two_zeros}. If we choose $\lambda>C'$, then we have $\min_{\theta\in J}k(\theta_0)<0$, and hence $k$ changes sign in $J$.

In the case $f-g\le 0$ on $J$, we take $\eta=1$ and the argument is similar.
\end{proof}

\begin{cor}\label{choiceOfShift}
Let $\delta,t>0$ with $0<\delta\leq  c_2 t/2$.  Let $\mathcal W,\mathcal B\subset \mathcal F$ be a pair of $2t$-separated finite sets.

Let $\mathcal R$ be a collection of pairwise $2C_c$-incomparable $(\delta,t)$-rectangles of the form $f^\delta(I)$ where $f\in \mathcal F$ and $I\sub J/4$. Suppose each rectangle in $\mathcal{R}$ is tangent to at least one $w_R\in \mathcal W$ and at least one $b_R\in\mathcal B$.

Then there exists $\eta \in \{-1,0,1\}$ and a set $\mathcal{R}'\subset \mathcal{R}$ with the following properties.
\begin{itemize}
\item $\#\mathcal{R}'\geq \frac{1}{3}\#\mathcal{R}$.
\item If we define $\mathcal{W}'=\{w+\eta \lambda\delta\colon w\in\mathcal{W}\}$, then each rectangle in $\mathcal{R}'$ is $2\lambda$-tangent to at least one $w_R\in \mathcal W'$ and at least one $b_R\in\mathcal B$.
\item For each $R\in \mathcal R'$, we have that $w_R,b_R$ intersect exactly twice in $J$.
\end{itemize}
\end{cor}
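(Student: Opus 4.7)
The plan is to apply Lemma \ref{lem_random_1} pointwise over $\mathcal{R}$ and then pigeonhole on the shift parameter $\eta \in \{-1, 0, 1\}$. First I would verify that the hypotheses of Lemma \ref{lem_random_1} are met for each $R \in \mathcal{R}$ with the pair $(w_R, b_R)$. The separation assumption gives $\Vert w_R - b_R \Vert \ge 2t$, and since $c_2 \le c_1$ (by the ordering of constants noted earlier in the section), our hypothesis $\delta \le c_2 t/2$ implies $\delta < c_1 t$, so the hypothesis of Lemma \ref{lem_random_1} is satisfied. Furthermore $w_R, b_R$ are both tangent to $R$ by the assumption on $\mathcal{R}$, and $R$ sits over a subinterval of $J/4$.

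Next, for each $R \in \mathcal{R}$, Lemma \ref{lem_random_1} produces some $\eta_R \in \{-1, 0, 1\}$ such that the function $h_R = w_R + \eta_R \lambda \delta$ is $2\lambda$-tangent to $R$ and $h_R$ intersects $b_R$ in exactly two points of $J$. By the pigeonhole principle, one value $\eta \in \{-1, 0, 1\}$ satisfies $\eta_R = \eta$ for at least $\tfrac{1}{3}\#\mathcal{R}$ rectangles $R \in \mathcal{R}$; let $\mathcal{R}'$ denote this subcollection and define
\[
\mathcal{W}' = \{w + \eta\lambda\delta : w \in \mathcal{W}\}.
\]

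Finally, I would verify the three bulleted conclusions. The cardinality bound $\#\mathcal{R}' \ge \tfrac{1}{3}\#\mathcal{R}$ is immediate from the pigeonhole step. For each $R \in \mathcal{R}'$, the element $w_R + \eta\lambda\delta \in \mathcal{W}'$ is $2\lambda$-tangent to $R$ (from Lemma \ref{lem_random_1}), while $b_R \in \mathcal{B}$ is tangent to $R$ (and hence certainly $2\lambda$-tangent), giving the second bullet. The third bullet follows because, for $R \in \mathcal{R}'$, the pair $(w_R + \eta\lambda\delta, b_R) = (h_R, b_R)$ is exactly the pair to which the twice-intersection conclusion of Lemma \ref{lem_random_1} applies. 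There is no serious obstacle here; the only delicate point is lining up the constants so that the hypothesis $\delta < c_1 t$ of Lemma \ref{lem_random_1} is available from the slightly tighter $\delta \le c_2 t/2$ assumed in the corollary, which is exactly why $c_2$ was chosen with $c_2 \le c_1$ in the preliminary reductions.
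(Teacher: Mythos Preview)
Your proposal is correct and is exactly the argument the paper intends: the corollary is stated immediately after Lemma~\ref{lem_random_1} without a separate proof, and your application of that lemma to each $R$ followed by pigeonholing on $\eta\in\{-1,0,1\}$ is precisely the implied reasoning. Your care in checking that $\delta\le c_2 t/2$ yields $\delta<c_1 t$ (since $c_2\le c_1$) and in identifying the shifted $w_R+\eta\lambda\delta\in\mathcal W'$ as the element appearing in the last two bullets matches the paper's conventions.
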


We are now ready to prove Lemma \ref{prop_number_estimate1}. Applying Corollary \ref{choiceOfShift} to $\mathcal{W}$ and $\mathcal{B}$, we obtain a perturbed set $\mathcal{W}'$ and a subset $\mathcal{R}'\subset\mathcal{R}$. We claim that $\mathcal{W}'\cup\mathcal{B}$ is a finite subset of a family $\mathcal{F}'$ that has cinematic constant at most $2K$. Indeed, we can simply define $\mathcal{F}' = (\mathcal W - \eta \lambda\delta)\cup \mathcal B$ (recall that  $\mathcal W\subset C^2(J)$, so $\mathcal W - \eta \lambda\delta=\{f - \eta \lambda\delta\colon f\in \mathcal W\}$. It is easy to check that since $\mathcal W$ and $\mathcal{B}$ are $2t$-separated, \eqref{cinematicFunctionCondition} continues to hold with $2K$ in place of $K$ (the only interesting case to check is when $f\in \mathcal W - \eta \lambda\delta$ and $g\in \mathcal B$, or vice-versa). Also, $\mathcal W'\cup \mathcal B$ is at least $t$ separated, if $c_2$ is chosen to be small enough.

Our collection $\mathcal{W}'$, $\mathcal{B},$ and $\mathcal{R}'$ satisfies property A1 from the statement of Lemma \ref{prop_number_estimate0}. After applying a second harmless infinitesimal perturbation, we can ensure that $\mathcal{W}'$, $\mathcal{B},$ and $\mathcal{R}'$  satisfy Properties A2 and A3, while maintaining property A1. Lemma \ref{prop_number_estimate1} now follows from Lemma \ref{prop_number_estimate0}.

Finally, in light of the preliminary reductions given by Lemmas \ref{lem_c_2} and \ref{lem_r=t}, Proposition \ref{thm_number_estimate} follows from Lemma \ref{prop_number_estimate1} via a standard random sampling argument. See i.e.~the proof of Lemma 1.4 from \cite{Wolff2000}.


\section{Proof of Theorem \ref{thm_wolffAnalogue_maximal_general}}\label{proofOfMaximalThmSection}
We have now assembled the necessary ingredients to execute the proof sketch described in Section \ref{thmthm_wolffAnalogue_maximal_generalSketchSection}. We will begin with a few preliminary reductions. First, if $\mathcal{F}\subset C^2(J)$ is a family of cinematic functions, the results from Sections \ref{geomOfCinematicSection} and \ref{lensCountingBoundSection} allow us to control the functions $f\in\mathcal{F}$ on a slightly shorter interval $J/4$. This is inconvenient, since the domain of integration in \eqref{eqn_L32Bd_general} might involve the entire interval $J$. Our first task is to show that Theorem \ref{thm_wolffAnalogue_maximal_general} will follow from the following (superficially) weaker version, in which the domain of integration has been restricted to a slightly smaller set.

\begin{lem}\label{thm_wolffAnalogue_maximal_generalShortCurve}
Let $\eps>0$, $0<\alpha\leq\zeta\leq 1,$ and $D,K\geq 1$. Then the following is true for all $\delta>0$ sufficiently small (depending on $D,K,$ and $\eps$). Let $\mathcal{F}\subset C^2(J)$ be a family of cinematic functions, with cinematic constant $K$ and doubling constant $D$. Let $E$ be a $(\delta,\alpha;\delta^{-\eps})_1\times(\delta,\alpha;\delta^{-\eps})_1$ quasi-product.

Let $F\subset\mathcal{F}$ be a set of functions that satisfies the Frostman-type non-concentration condition
\[
\#(F \cap B) \leq  \delta^{-\eps}(r/\delta)^\zeta\quad\textrm{for all balls}\ B\subset C^2(I)\ \textrm{of radius}\ r\geq\delta.
\]

Let $J' = \frac{1}{16}J$ and let $E' = E\cap (J'\times\RR)$. Then
\begin{equation}\label{eqn_L32Bd_generalEprime}
 \int_{E'}\Big( \sum_{f\in F}{\bf 1}_{f^\delta}\Big)^{3/2} \leq \delta^{2-\alpha/2-\zeta/2-C_0\eps}(\# F),
\end{equation}
where $C_0=C_0(D)>0$ depends on $D$.
\end{lem}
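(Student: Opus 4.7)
The plan is to execute the two-scale strategy sketched in Section \ref{thmthm_wolffAnalogue_maximal_generalSketchSection}: one introduces an intermediate tangency scale $\Delta$, uses an $L^2$/Cauchy--Schwarz bound at scales below $\Delta$ (where the curves look transverse), and uses the bipartite lens counting Proposition \ref{thm_number_estimate} at scales above $\Delta$ (to control the number of coarse tangency rectangles). First I would dyadically pigeonhole the LHS of \eqref{eqn_L32Bd_generalEprime}, reducing to the inequality $\mu^{3/2}|X_\mu|\lesssim \delta^{2-\alpha/2-\zeta/2-C\eps}(\#F)$ for each dyadic level $\mu$, where $X_\mu=\{(\theta,y)\in E':\sum_{f}\mathbf 1_{f^\delta}(\theta,y)\sim\mu\}$, and thence to the bilinear task of counting pairs $(f,g)\in F\times F$, $f\neq g$, whose $\delta$-neighborhoods meet over $X_\mu$. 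The restriction to $J'=J/16$ is precisely what allows the Section~\ref{geomOfCinematicSection} estimates (which were proved over $J/4$) to be applied to tangency rectangles whose base interval is dilated by a bounded factor.

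Next, using the doubling property of $\mathcal F$, I would decompose $F$ dyadically by pairwise distance: for each dyadic $t\in[\delta,K]$, I would extract bipartite subfamilies $\mathcal W,\mathcal B\subset F$, each contained in a ball of radius $\sim t$ and with the pair $(\mathcal W,\mathcal B)$ essentially $t$-separated, capturing a $\delta^{O(\eps)}$ fraction of the incidences; then I would further pigeonhole the tangency parameter $\Delta=\Delta(w,b)\in[\delta,t]$ so that almost all contributing pairs share the same $\Delta$. By Lemma \ref{prop_tangent1}, each contributing pair $(w,b)$ confines its intersection to a $(\Delta,t)$-rectangle, so after merging comparable rectangles one obtains a family $\mathcal R$ of pairwise $100$-incomparable $(\Delta,t)$-rectangles over $J'$, each tangent to $\gtrsim \mu_{\mathcal W}$ members of $\mathcal W$ and $\gtrsim \mu_{\mathcal B}$ members of $\mathcal B$. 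Proposition \ref{thm_number_estimate} then gives $\#\mathcal R\lesssim \delta^{-O(\eps)}\bigl(\#\mathcal W/\mu_{\mathcal W}+\#\mathcal B/\mu_{\mathcal B}\bigr)^{3/2}$.

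Within a single $R\in\mathcal R$, the curves $\{w^\delta:w\in\mathcal W, w\text{ tangent to }R\}$ (and similarly for $\mathcal B$) behave like $\sim\delta$-separated tubes that cross transversely at scale $\Delta$; by Lemma \ref{prop_two_zeros} any two such curves have $|w^\delta\cap b^\delta\cap R|\lesssim \delta^2/\sqrt{\Delta t}$, so a Cauchy--Schwarz step (together with the Frostman hypothesis \eqref{frostmanConditionOnF} to bound the number of curves tangent to $R$, and the non-concentration of $E'$ to bound $|X_\mu\cap R|$) gives the contribution from $R$; crucially, the quasi-product Frostman structure of $E'$ is what produces the two $\delta^{-\alpha/2}$ savings in the final exponent. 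Summing over $\mathcal R$ and then over the dyadic choices of $t,\Delta,\mu$ and the bipartite extractions (each costing only $\delta^{-O(\eps)}$) gives \eqref{eqn_L32Bd_generalEprime}. The main obstacle I anticipate is the bookkeeping around $\Delta$: one must ensure the $L^2$ bound is sharp at scales $\leq\Delta$ while the lens count is sharp at scales $\geq\Delta$, and arrange the many $\delta^{-O(\eps)}$ losses (from pigeonholing, from the constant $A$ in Proposition \ref{thm_number_estimate}, and from the Frostman conditions) to aggregate to a single bounded power $\delta^{-C_0\eps}$; induction on scales is unavailable here, so each step of the bilinear-to-linear reduction must be genuinely sharp rather than merely iterable.
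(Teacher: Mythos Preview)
Your high-level architecture matches the paper's: reduce to a weak-type bound at level $\mu$, identify scales $t$ and $\Delta$, apply Proposition~\ref{thm_number_estimate} to coarse $(\Delta,t)$-rectangles, and run an $L^2$-type argument at finer scales. The obstacle you anticipate---getting the bookkeeping around $\Delta$ sharp without induction on scales---is real, and the mechanism you propose for selecting $t$ and $\Delta$ (dyadic pigeonholing over pairs $(f,g)$ by $\Vert f-g\Vert$ and $\Delta(f,g)$) is not strong enough to overcome it.

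The paper's device is a pair of \emph{two-ends reductions}. For each $x$ in the level set it selects $t(x)$ maximizing $t^{-\eps}\sup_g\#\big(F(x)\cap B(g,t)\big)$, then pigeonholes on $t(x)$. This gives not merely ``many pairs at distance $\sim t$'' but the per-point non-concentration of Lemma~\ref{lem_|z-z'|}: at every surviving $x$, no ball of radius $\lambda t$ captures more than an $O(\lambda^\eps)$ fraction of $F(x)$. That non-concentration is inherited by each coarse rectangle (Lemma~\ref{lem_Z(tildeR)}), and it is what permits the bipartite split to be carried out \emph{per rectangle} with both multiplicities $\gtrsim\delta^{O(\eps)}\mu$; a single global bipartite extraction as you propose does not control the per-rectangle multiplicities needed in \eqref{boundOnMuNuR}. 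A second two-ends reduction, now with exponent $\eps^2$ (see \eqref{eqn_defn_k}), pins down $\Delta$ with the same per-point control and is what makes Lemmas~\ref{lem_pairs_lower_bound} and \ref{lem_tangent_at_most_1} hold. Your $L^2$ step also needs adjustment: among curves tangent to a fixed coarse rectangle there can be many pairs with $\Delta(f,g)\ll\Delta$, so the uniform bound $|w^\delta\cap b^\delta|\lesssim\delta^2/\sqrt{\Delta t}$ is false in general. The paper instead introduces fine $(\delta,t')$-rectangles with $t'\sim t\Delta/\delta$, bounds their number $M$ per coarse rectangle in two ways---the quasi-product measure bound \eqref{eqn_M_bound_measure} and the pair-counting bound \eqref{eqn_M_bound_tangency}---and takes the weighted geometric mean $\eqref{eqn_M_bound_measure}^{1/4}\eqref{eqn_M_bound_tangency}^{3/4}$; this interpolation, not a direct Cauchy--Schwarz, is where the exponent $3/2$ appears at the fine scale.
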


Using Lemma \ref{thm_wolffAnalogue_maximal_generalShortCurve} we can prove Theorem \ref{thm_wolffAnalogue_maximal_general} as follows. Let $J_1,\ldots,J_N$, $N\sim|\log\delta|$ be a sequence of sub-intervals of $J$, so that $J \backslash \bigcup_{i=1}^N \frac{1}{16}J_i$ consists of two intervals of length $\delta$ (these intervals will be at the two endpoints of $J$). Note that if $J_0$ is an interval of length $\delta$ and if $E_0 = E \cap (J_0\times\RR)$, then
\begin{equation}\label{trivialStubBd}
 \int_{E_0}\Big( \sum_{f\in F}{\bf 1}_{f^\delta}\Big)^{3/2} \lesssim \delta^2(\#F)^{3/2} \leq \delta^{2-\alpha/2-\zeta/2-C_0\eps}(\# F).
\end{equation}

For each index $i$, apply Lemma \ref{thm_wolffAnalogue_maximal_generalShortCurve} to the restriction $\mathcal{F}|_{J_i}\subset C^2(J_i)$; this is still a family of cinematic functions, with cinematic constant $K$ and doubling constant $D$. Using the triangle inequality, we conclude that
\[
\int_{E}\Big( \sum_{f\in F}{\bf 1}_{f^\delta}\Big)^{3/2} \lesssim |\log\delta| \delta^{2-\alpha/2-\zeta/2-C_0\eps}(\# F).
\]
Theorem \ref{thm_wolffAnalogue_maximal_general} now follows, provided we select $C>C_0$ and restrict $\delta>0$ sufficiently small.

Next, we remark that when proving Lemma \ref{thm_wolffAnalogue_maximal_generalShortCurve}, we may suppose that $F$ is $\delta$-separated. This follows from the triangle inequality and observation that by \eqref{frostmanConditionOnF} each $\delta$-ball contains at most $\delta^{-\eps}$ elements of $F$.

Finally, we claim that Lemma \ref{thm_wolffAnalogue_maximal_generalShortCurve} follows from the following restricted weak-type estimate. For each $x\in\RR^2$, let $F(x) = \{f\in F\colon x\in f^\delta\}.$ For each $x\in\RR^2$, $\#F(x)\leq\#F\lesssim \delta^{-\eps-\zeta}$. Thus by dyadic pigeonholing, there is a set $E_0\subset E$ and an integer $\mu$ so that $\mu\leq \#F(x)<2\mu$ for all $x\in E_0$, and $\mu^{3/2}|E_0|\approx\textrm{LHS of}\ \eqref{eqn_L32Bd_generalEprime}$. Thus in order to prove Lemma  \ref{thm_wolffAnalogue_maximal_generalShortCurve} (and hence Theorem \ref{thm_wolffAnalogue_maximal_general}), it suffices to establish the estimate
\begin{equation}\label{E0Bd}
    |E_0|\lessapprox \delta^{2-\frac\Ga 2-\frac \zeta 2-C_1\eps}\# F \mu^{-3/2},
\end{equation}
for some $C_1=C_1(D)$.

\subsection{Two ends reductions}
In this section, we will find quantities called $t$ and $\Delta$, so that for a typical point $x\in E_0,$ the functions in $F(x)$ are localized to a ball in $C^2(J)$ of radius $t$, and for a typical pair of functions $f,g\in F(x)$, we have $\Delta(f,g)\sim\Delta$. Our main tool will be the ``two-ends'' reduction. See \cite{Twoends} for an introduction to the topic.

\subsubsection{Finding the diameter of $F(x)$}\label{subsub_two_ends}
Since $F\subset C^2(J)$ is bounded, so is the set $F(x)$ for each $x\in E_0$. However, it is possible that $F(x)$ (or a large fraction thereof) has diameter much smaller than $\operatorname{diam}(F)$ for a typical $x\in E_0$. In this section, we will find the ``true'' diameter of $F(x)$ for a typical $x\in E_0$.

For $t>0$ and $x\in \R^2$, define
\begin{equation}\label{eqn_defn_f}
    n_1(t,x)=\sup_{g\in C^2(J)}\#  F(x)\cap B(g,t),
\end{equation}
where $B(g,t)\subset C^2(J)$ is the ball centered at $g\in C^2(J)$ of radius $t$. With this, we further define
\begin{equation}\label{eqn_defn_g}
    n_2(x)=\sup_{t\in [\delta,K]}n_1(t,x)t^{-\Ge}.
\end{equation}
Hence, given $x$, there is some $t(x)\in [\delta,K]$ such that $n_2(x)\le 2n_1(t(x),x)t(x)^{-\Ge}$. There is also some $g_x$ such that
\begin{equation}
    \#  F(x)\cap B(g_x,t(x))\ge n_2(t(x),x)/2,
\end{equation}
so we have
\begin{equation}\label{eqn_g_two_ends}
    n_2(x)\le 4 t(x)^{-\Ge}\#  F(x)\cap B(g_x,t(x)).
\end{equation}
For simplicity we denote $B_x=B(g_x,t(x))$.

Since $t(x)\in [\delta,K]$, by a dyadic pigeonholing, there is a Borel set $E_1\sub E_0$ and a number $t\in [\delta,2K]$ such that $t(x)\in (t/2,t]$ for every $x\in E_1$, and $|E_1|\approx |E_0|$. It then suffices to show
\begin{equation}
    |E_1|\lessapprox \delta^{2-\frac\Ga 2-\frac \zeta 2-C_1\eps} \#  F \mu^{-3/2}.
\end{equation}

Since $\mathcal{F}$ is doubling, we can cover it by finitely overlapping balls $B$ of radius $t$, and denote
\begin{equation*}
     F_B= F\cap (3B),
\end{equation*}
so $\{ F_B\}$ covers $ F$ and is finitely overlapping. Also, let $E_B$ be the set of $x\in E_1$ such that $B_x$ intersects $B$, and thus $\{E_B\}$ is also finitely overlapping. Thus it suffices to show for each $B$
\begin{equation}\label{eqn_E_B}
    |E_B|\lessapprox \delta^{2-\frac\Ga 2-\frac \zeta 2-C_1\eps} \#  F_B \mu^{-3/2}.
\end{equation}

Now fix a ball $B$ of radius $t$ and let $x\in E_B$. By \eqref{eqn_g_two_ends}, \eqref{eqn_defn_g} and \eqref{eqn_defn_f}, we have
\begin{align*}
    \# F (x)\cap B_x\gtrsim t^{\Ge} n_2(x)\gtrsim t^{\Ge}n_1(K,x)=t^{\Ge}\# F (x)\sim t^\Ge \mu.
\end{align*}
By another dyadic pigeonholing, we may find a subset $E'_B\sub E_B$ and a scale $\mu_1$ with
\begin{equation}\label{eqn_defn_mu_1}
    t^\Ge\mu\lesssim\mu_1\le \mu,
\end{equation}
such that $|E'_B|\approx |E_B|$ and for every $x\in E'_B$ we have $\# F (x)\cap B_x\sim \mu_1$. It suffices to show \eqref{eqn_E_B} with $E_B$ replaced by $E'_B$.

Now let $x\in E'_B$. If $B$ intersects $B_x$ then we have $B_x\sub 3B$. Hence
\begin{equation}\label{eqn_lower_bound_Z_B}
    \# F_B(x)=\# F (x)\cap (3B)\cap B_x=\# F (x)\cap B_x\sim \mu_1.
\end{equation}

From now on we fix such a ball $B$ and aim to prove \eqref{eqn_E_B}. We end with a useful lemma.
\begin{lem}\label{lem_|z-z'|}
Fix $x\in E'_B$ and $g\in C^2(J)$. If $\delta t^{-1}<\Gl <1$, then for at most $4\cdot (2\Gl)^\Ge$ fraction of $f\in  F_B(x)$ we have $\Vert f-g\Vert \le \Gl t$.
\end{lem}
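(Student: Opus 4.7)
The strategy is essentially a direct application of the two-ends construction of $t(x)$, $g_x$, and $B_x$. The intuition is that $B_x$ was chosen (up to constants) to be the ball in $C^2(J)$ that maximizes the weighted concentration $t^{-\varepsilon}\,\#F(x)\cap B(g,t)$, so \emph{any} other ball of comparable or smaller radius can capture only a proportionally smaller fraction of $F_B(x)$. The claimed power $\lambda^\varepsilon$ loss is exactly what the definition of $n_2(x)$ provides.

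More concretely, first I would observe that it suffices to treat the case $\lambda t\in[\delta,K]$: the hypothesis $\lambda>\delta t^{-1}$ gives $\lambda t>\delta$, and if $\lambda t>K$ the bound is trivial since then $(2\lambda)^\varepsilon\gtrsim 1$ and the right-hand side already exceeds $\#F_B(x)$. Next, directly from the definitions \eqref{eqn_defn_f}--\eqref{eqn_defn_g},
\begin{equation*}
\#\{f\in F_B(x):\|f-g\|\leq \lambda t\}\;\leq\;\#F(x)\cap B(g,\lambda t)\;\leq\;n_1(\lambda t,x)\;\leq\;n_2(x)\,(\lambda t)^\varepsilon.
\end{equation*}

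Second, I would use the optimality of $(t(x),g_x)$. Combining \eqref{eqn_g_two_ends} with \eqref{eqn_lower_bound_Z_B} gives
\begin{equation*}
n_2(x)\;\leq\;4\,t(x)^{-\varepsilon}\bigl(\#F(x)\cap B_x\bigr)\;=\;4\,t(x)^{-\varepsilon}\,\#F_B(x),
\end{equation*}
where the last equality uses that for $x\in E_B'$, $B\cap B_x\neq\emptyset$ implies $B_x\subset 3B$, so $F_B(x)=F(x)\cap B_x$. Substituting this into the previous display and using $t(x)\in(t/2,t]$, so that $(t/t(x))^\varepsilon\leq 2^\varepsilon$, yields
\begin{equation*}
\#\{f\in F_B(x):\|f-g\|\leq \lambda t\}\;\leq\;4\,(t/t(x))^\varepsilon\,\lambda^\varepsilon\,\#F_B(x)\;\leq\;4\,(2\lambda)^\varepsilon\,\#F_B(x),
\end{equation*}
which is the desired bound.

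There is really no serious obstacle in this argument; it is a bookkeeping check that the constants assembled during the two-ends pigeonholing in Section \ref{subsub_two_ends} cooperate. The only minor point worth flagging is that the lemma is stated with the sharp factor $4(2\lambda)^\varepsilon$ rather than $O(\lambda^\varepsilon)$, so I would be careful to track where the factor $4$ enters (from \eqref{eqn_g_two_ends}) and where the factor $2^\varepsilon$ enters (from $t\leq 2t(x)$), rather than absorbing constants into a generic $\lesssim$. Everything else is immediate from the two-ends definitions.
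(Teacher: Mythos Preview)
Your proof is correct and follows essentially the same route as the paper: bound $\#\{f:\|f-g\|\le\lambda t\}$ above by $(\lambda t)^\varepsilon n_2(x)$ via the definition of $n_2$, bound $n_2(x)$ above by $4\cdot 2^\varepsilon t^{-\varepsilon}\#F_B(x)$ via \eqref{eqn_g_two_ends}, and combine. One small slip: the claimed equality $F_B(x)=F(x)\cap B_x$ is only an inclusion $F(x)\cap B_x\subset F_B(x)$ (since $B_x\subset 3B$ gives $F(x)\cap B_x\subset F(x)\cap 3B=F_B(x)$), but this is the direction you need, so the argument is unaffected.
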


\begin{proof}
Let $g\in C^2(J)$. By \eqref{eqn_g_two_ends} and \eqref{eqn_lower_bound_Z_B}, we have
\begin{equation*}
    n_2(x)\le 4 t(x)^{-\Ge}\#  F (x)\cap B_x\le 4 \cdot 2^\Ge t^{-\Ge}\#  F_B(x).
\end{equation*}
But by the definition of $n_2$ in \eqref{eqn_defn_g} and $\lambda t\in [\delta,K]$, we also have
\begin{align*}
  n_2(x)
  &\ge n_1(\Gl t,x)(\Gl t)^{-\Ge}
  \ge\# F (x)\cap B(g,\Gl t ) \Gl^{-\Ge} t^{-\Ge}
\end{align*}
Combining the lower and upper bounds of $n_2(x)$ gives
\begin{equation*}
    \# F (x)\cap B(g,\Gl t )\le  4\cdot (2\Gl)^\Ge\#  F_B(x),
\end{equation*}
from which the result follows.
\end{proof}

\subsubsection{Finding the typical tangency of curve-curve intersection}
As discussed in Section \ref{thmthm_wolffAnalogue_maximal_generalSketchSection}, a typical pair of intersecting curves in $F$ might intersect transversely, tangentially, or something in-between. In this section, we will find the typical degree of tangency when two curves intersect. We will do this by using a two-ends reduction argument similar to that in Section \ref{subsub_two_ends}. This two-ends reduction will use a much smaller quantity in place of $\eps$, so that the resulting refinement retains the two-ends property from  Section \ref{subsub_two_ends}.

For $x\in E_B$ and $\Delta>0$, define
\begin{equation}\label{eqn_defn_h}
    n_3(\Delta,x)=\max_{k\in  F_B(x)}\#\{f\in F_B(x):\Delta(f,k)\le \Delta\}.
\end{equation}
Since $ F_B$ is contained in $3B$ which has diameter $6t$, invoking \eqref{eqn_lower_bound_Z_B} and the relation $\Delta(f,k)\leq \Vert f-g\Vert$, we have
\begin{equation}\label{eqn_h(t)}
    n_3(6t,x)=\# F_B(x).
\end{equation}
We further define
\begin{equation}\label{eqn_defn_k}
  n_4(x)= \sup_{\Delta\in [\delta,6t]}n_3(\Delta,x)\Delta^{-\Ge^2}.
\end{equation}
Given $x$, there is some $\Delta_x\in [\delta,6t]$ such that
\begin{equation}\label{eqn_defn_Delta_x}
    n_4(x)\le 2n_3(\Delta_x,x)\Delta_x^{-\Ge^2}.
\end{equation}
There is also some $k_x\in  F_B(x)$ attaining the maximum in \eqref{eqn_defn_h}, that is,
\begin{equation}\label{eqn_defn_vx}
    \#\{f\in F_B(x):\Delta(f,k_x)\le \Delta_x\}= n_3(\Delta_x,x),
\end{equation}
and hence
\begin{equation}\label{eqn_k}
    n_4(x)\le 2\Delta_x^{-\Ge^2}\#\{f\in F_B(x):\Delta(f,k_x)\le \Delta_x\}.
\end{equation}
Since $\Delta_x\in [\delta,6t]$, by a dyadic pigeonholing, there is a Borel set $E''_B\sub E'_B$ and a number $\Delta\in [\delta,12t]$ such that $\Delta_x\in (\Delta/2,\Delta]$ for every $x\in E''_B$, and $|E''_B|\approx |E'_B|$. It then suffices to show
\begin{equation}\label{eqn_E''_B_bound}
    |E''_B|\lessapprox \delta^{2-\frac\Ga 2-\frac \zeta 2-C_1\eps} \#  F_B \mu^{-3/2}.
\end{equation}

\subsubsection{Shadings}
At this point, the set $E_0$ from \eqref{E0Bd} has been divided into (mostly) disjoint pieces $E_B$. For each $x\in E_B$, most of the functions $f\in F(x)$ are contained in the ball $B$, and only a small fraction of these functions are contained in any ball substantially smaller than $B$. For a typical pair of functions $f,g\in F(x)$, $\Delta(f,g)$ has size roughly $\Delta$. For technical reasons, it will be convenient to restrict the sets $f^\delta$ to a slightly smaller ``shading'' $Y_1(f),$ so that the above properties hold whenever $x\in Y_1(f)$. More precisely, we define
\begin{equation}\label{eqn_defn_Y_1}
    Y_1(f)=\{x\in E''_B\cap f^\delta:\Delta(f,k_x)\le \Delta\}.
\end{equation}

In view of \eqref{eqn_k}, \eqref{eqn_h(t)} and $\Delta_x\le \Delta$, for every $x\in E''_B$ we have
\begin{equation}\label{eqn_Z'_B_lower_bound}
    \#\{f\in  F_B(x):\Delta(f,k_x)\le \Delta\}\gtrsim (\Delta/t)^{\Ge^2}\# F_B(x)\sim (\Delta/t)^{\Ge^2} \mu_1.
\end{equation}
On the other hand, we have a trivial bound
\begin{equation*}
    \#\{f\in  F_B(x):\Delta(f,k_x)\le \Delta\}\leq \# F_B(x)\sim \mu_1.
\end{equation*}
Now we apply another dyadic pigeonholing to find a subset $E_2\sub E''_B$ with $|E_2|\approx |E''_B|$ and a scale $\mu_2$ with
\begin{equation}\label{eqn_defn_mu_2}
    (\Delta/t)^{\Ge^2} \mu_1\lesssim \mu_2\le \mu_1,
\end{equation}
such that for every $x\in E_2$ we have
\begin{equation}\label{eqn_mu_2_number}
    \#\{f\in  F_B(x):\Delta(f,k_x)\le \Delta\}\sim \mu_2.
\end{equation}
Equivalently, we have
\begin{equation}\label{eqn_Z'_B_integral_bound}
    \#\{f\in F_B:x\in Y_1(f)\}=\sum_{f\in F_B}{\bf 1}_{Y_1(f)}(x)\sim \mu_2,\quad \forall x\in E_2.
\end{equation}

It then suffices to prove
\begin{equation}\label{eqn_E_2_bound}
    |E_2|\lessapprox \delta^{2-\frac\Ga 2-\frac \zeta 2-C_1\eps} (\# F_B) \mu^{-3/2}.
\end{equation}

We record a coarse bound that is used in some dyadic pigeonholing argument later. Since $\# F_B(x)\gtrsim t^\Ge\mu$ and $\mu\le\# F\lesssim \delta^{-1-\Ge}$, the right hand side of \eqref{eqn_E_2_bound} is trivially bounded below by $\delta^{3}$, for $\delta$ small enough. If $|E_2|\le \delta^{3}$ then we are done, and so in the following we may assume
\begin{equation}\label{eqn_delta^3}
    |E_2|>\delta^{3}.
\end{equation}

\subsection{Fine-scale rectangles}\label{fineScaleRectanglesSection}
In this section, we will show that $E_2$ can be covered by curvilinear rectangles of dimensions roughly $\delta\times \delta / \sqrt{t\Delta}$, so that these rectangles are (mostly) disjoint, and for most pairs $f,g\in F_B(x)$, we have that $Y_1(f)\cap Y_1(g)$ is localized to roughly one of these rectangles.

\begin{lem}\label{prop_Y_2}
There is a constant $C_R=C_R(K)$ ($R$ stands for ``rectangle") such that the following is true. Let
\begin{equation}
 t'=C_R t\Delta/\delta.
\end{equation}
For each $x\in E_2$, there is a $(\delta,t')$-rectangle $R_x$ over $J/8$, such that $x\in R_x$, and every $f\in F_B$ with $x\in Y_1(f)$ is tangent to $R_x$.
\end{lem}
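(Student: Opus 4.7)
The plan is to define
\[
R_x := k_x^\delta(I_x), \qquad I_x := \bigl[\theta_x - \tfrac12\sqrt{\delta/t'},\ \theta_x + \tfrac12\sqrt{\delta/t'}\bigr],
\]
which is a $(\delta,t')$-rectangle with axis $k_x$ and midpoint interval centered at $\theta_x$, of length $|I_x|=\delta/\sqrt{C_R t\Delta}$. Since $x\in E_2\sub E'\sub (J/16)\times\R$, we have $\theta_x\in J/16$; since $\Delta\ge\delta$ gives $|I_x|\le\sqrt{\delta/(C_R t)}$, for $\delta$ small we have $|I_x|\le|J|/16$, so $I_x\sub J/8$ and $R_x$ is over $J/8$. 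The inclusion $x\in R_x$ follows from $\theta_x\in I_x$ together with $|y_x-k_x(\theta_x)|\le\delta$, the latter holding because $k_x\in F_B(x)$ forces $x\in k_x^\delta$.

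It remains to show that every $f\in F_B$ with $x\in Y_1(f)$ is $\lambda$-tangent to $R_x$ in the sense of Definition \ref{defn_tangency}, for some $\lambda=\lambda(K)$. Set $h:=f-k_x$ and $t_h:=\|h\|_{C^2(J)}$. Then $t_h\le 6t$ (since $f,k_x\in 3B$ of diameter $6t$), $|h(\theta_x)|\le 2\delta$ (from $x\in f^\delta\cap k_x^\delta$), and $\Delta(h)\le\Delta$ (by definition of $Y_1(f)$). Tangency $R_x\sub f^{\lambda\delta}$ reduces to the estimate $|h(\theta)|\le(\lambda-1)\delta$ on $I_x$. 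If $t_h\le 24K\delta$, then $|h(\theta)|\le\|h\|_\infty\le t_h\le 24K\delta$ everywhere on $J$, so $f$ is $(24K+1)$-tangent to $R_x$. Otherwise $t_h>24K\delta$, and the hypotheses $4\delta\le t_h/(6K)$ together with $|h(\theta_x)|\le(4\delta)/2$ allow us to invoke Part \ref{item_2c} of Lemma \ref{prop_two_zeros} at scale $4\delta$ and with $\tilde\theta=\theta_x\in J/16\sub J/4$. This produces the component $I_0^{(f)}$ of $\{\theta\in J/4:|h(\theta)|\le 4\delta\}$ containing $\theta_x$, and gives
\[
|I_0^{(f)}|\;\gtrsim\;\frac{4\delta}{\sqrt{(\Delta(h)+4\delta)\,t_h}}\;\gtrsim\;\frac{\delta}{\sqrt{\Delta\,t}},
\]
where we used $\Delta(h)\le\Delta$, $\Delta\ge\delta$, and $t_h\le 6t$. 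Each endpoint of $I_0^{(f)}$ is either on $\partial(J/4)$ --- at distance $\ge 3|J|/32$ from $\theta_x$, since $\theta_x\in J/16$ --- or is an interior endpoint $\theta_I$ satisfying $|h(\theta_I)|=4\delta$, in which case the calculation in the proof of Part \ref{item_2c} applied to the pair $(\theta_x,\theta_I)$ gives $|\theta_I-\theta_x|\gtrsim\delta/\sqrt{\Delta t}$. Since $\delta/\sqrt{\Delta t}\le\sqrt{\delta/t}\ll|J|$ for $\delta$ small, the distance from $\theta_x$ to each endpoint of $I_0^{(f)}$ is $\gtrsim\delta/\sqrt{\Delta t}$. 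Choosing $C_R=C_R(K)$ large enough so that $|I_x|/2=\delta/(2\sqrt{C_R t\Delta})$ is less than this lower bound, we conclude $I_x\sub I_0^{(f)}$, so $|h|\le 4\delta$ on $I_x$ and $f$ is $5$-tangent to $R_x$.

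The main obstacle is the large-norm regime. The tangency interval $I_0^{(f)}$ on which $|h|\le 4\delta$ depends intrinsically on $f$, whereas $R_x$ must be chosen once and for all for each $x$. A uniform lower bound on $|I_0^{(f)}|$ alone is insufficient; one must also show that $\theta_x$ sits quantitatively far from both endpoints of $I_0^{(f)}$, uniformly in $f$. This is where the two-ends reduction keeping $\theta_x\in J/16$ (and thus away from $\partial(J/4)$) is used in tandem with the endpoint estimate extracted from the proof of Part \ref{item_2c} (which separates $\theta_x$ from any interior endpoint where $|h|=4\delta$). Together, these two estimates give the required uniform inclusion $I_x\sub I_0^{(f)}$ upon taking $C_R$ sufficiently large.
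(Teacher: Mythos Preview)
Your proof is correct and follows essentially the same approach as the paper: define $R_x = k_x^\delta(I_x)$ with $I_x$ centered at $\theta_x$ of length $\sqrt{\delta/t'}$, then invoke Part~\ref{item_2c} of Lemma~\ref{prop_two_zeros} at scale $4\delta$ to show $I_x$ lies inside the component of $\{|f-k_x|\le 4\delta\}$ containing $\theta_x$. You are in fact more careful than the paper on two points: you separately treat the edge case $t_h\le 24K\delta$ (where the hypothesis $4\delta\le t_h/(6K)$ of Part~\ref{item_2c} fails), and you justify the inclusion $I_x\sub I_0^{(f)}$ by bounding the distance from $\theta_x$ to \emph{each} endpoint rather than merely comparing lengths---both issues the paper's proof glosses over. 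The only cosmetic difference is that your edge case yields $(24K+1)$-tangency rather than $5$-tangency, but this $O_K(1)$ constant is harmless downstream.
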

\begin{proof}
Denote $x=(\theta_0,y_0)$ where $x\in J/16$. Let $I$ be the interval of length $\sqrt{\delta/t'}$ centred at $\theta_0$. Since $t\gtrsim\Delta\ge \delta$, we have $I\sub J/8$ if $C_R$ is large enough. With this, define the curvilinear rectangle
\begin{equation}
    R_x=k_x^\delta(I),
\end{equation}
where $k_x$ is as in \eqref{eqn_defn_vx}. We now show that $R_x$ is as required.

First we have $x\in R_x$ since $k_x\in F_B(x)$ by \eqref{eqn_defn_h}. Also, let $f$ be such that $x\in Y_1(f)$, then in particular, $x\in f^{\delta}$. By the triangle inequality, $|f(\theta_0)-k_x(\theta_0)|\le 2\delta$. By Part \eqref{item_2c} of Lemma \ref{prop_two_zeros} at the scale $4\delta$, we have
$$
E_{4\delta}=\{\theta\in J/4:|f(\theta)-k_x(\theta)|\le 4\delta\}
$$
contains an interval $I'$ obeying $\theta_0\in I'$ and
\begin{equation*}
    |I'|\gtrsim \delta d(f,k_x)^{-1/2}(\delta+\Delta(f,k_x))^{-1/2}.
\end{equation*}
Since $x\in Y_1(f)$ we have $\Delta(f,k_x)\le \Delta$. Also $d(f,k(x))\le 6t$. If $C_R$ is chosen to be large enough, then the interval $I$ containing $\theta_0$ will have length $\sqrt{\delta/t'}\le |I'|$, and thus $I\sub I'$ and $R_x$ is tangent to $f$.
\end{proof}

\subsubsection{Shadings of fine-scale rectangles}
For each $(\delta,t')$-rectangle $R$, we define its shading as
\begin{equation}\label{eqn_defn_Y_3}
    Y_2(R)=\{x\in C_s R\cap E_2:R_x \text{ is $C_s$-comparable to }R\},
\end{equation}
where $C_s=C_s(K)\ge 100$ ($s$ standing for ``shadings") is a suitable constant to be determined. Intuitively, $Y_2(R)$ is an important subset of $R$ since it is where the $\delta$-neighbourhoods of the curves in $F_B$ overlap greatly. 

For technical purposes, we also define a slightly smaller shading of $R$ as
\begin{equation*}
    Y'_2(R)=\{x\in R\cap E_2:R_x \text{ is $100$-comparable to }R\}.
\end{equation*}
By Lemma \ref{prop_Y_2}, we have
\begin{equation}\label{eqn_E_2_shadings}
    E_2=\bigcup_{x\in E_2}Y'_2(R_x).
\end{equation}

We also record an elementary lemma on the shadings of curvilinear rectangles.
\begin{lem}\label{lem_shading_comparable}
If $C_s$ is suitably chosen, then for any pair $(R,R')$ of $100$-comparable $(\delta,t)$-rectangles, we have $Y'_2(R')\sub Y_2(R)$.
\end{lem}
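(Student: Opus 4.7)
The plan is to unpack both definitions and verify each of the two membership requirements for $Y_2(R)$ directly, using the transitivity properties of comparability established earlier in Section \ref{geomOfCinematicSection}.

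Fix $x \in Y'_2(R')$. By definition, $x \in R' \cap E_2$, and $R_x$ is $100$-comparable to $R'$. I need to show (i) $x \in C_s R$, and (ii) $R_x$ is $C_s$-comparable to $R$. For (i), since $R'$ is $100$-comparable to $R$ by hypothesis, Lemma \ref{prop_comparable_rectangle} (applied with $\lambda = 100$) gives a constant $C \sim 1$ depending on $K$ such that $R' \subseteq 100 C \cdot R$. So $x \in R'$ implies $x \in 100 C \cdot R$, and it will suffice to choose $C_s \geq 100C$.

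For (ii), I chain the two $100$-comparabilities: $R_x$ is $100$-comparable to $R'$, and $R'$ is $100$-comparable to $R$. By Corollary \ref{cor_comparable_rectangle} (transitivity of $\lambda$-comparability up to a constant blowup depending on $K$ and the two input constants), there is a constant $\lambda_3 = \lambda_3(K)$ so that $R_x$ is $\lambda_3$-comparable to $R$. Again, choosing $C_s \geq \lambda_3$ handles this.

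Thus it suffices to set $C_s := \max\{100, 100C, \lambda_3\}$, which depends only on $K$. With this choice, both membership conditions for $Y_2(R)$ are satisfied, yielding $Y'_2(R') \subseteq Y_2(R)$. No step is really an obstacle here; the lemma is a bookkeeping consequence of Lemma \ref{prop_comparable_rectangle} and Corollary \ref{cor_comparable_rectangle}, and the only nontrivial input is that $C_s$ must be chosen \emph{after} $K$ but \emph{before} applying the definition of $Y_2$, which is exactly how the constant is introduced in \eqref{eqn_defn_Y_3}.
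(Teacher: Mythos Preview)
Your proposal is correct and takes essentially the same approach as the paper: both arguments use Lemma \ref{prop_comparable_rectangle} to get $R' \subseteq C_s R$ (hence $x \in C_s R \cap E_2$), and Corollary \ref{cor_comparable_rectangle} to chain the two $100$-comparabilities into $C_s$-comparability of $R_x$ with $R$. The only cosmetic difference is that you track the constants $100C$ and $\lambda_3$ explicitly before taking the maximum, whereas the paper simply says ``if $C_s$ is large enough'' at each step.
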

\begin{proof}
If $R,R'$ are $100$-comparable, then by Lemma \ref{prop_comparable_rectangle}, we have $R'\sub C_sR$ if $C_s$ is large enough. Thus if $x\in Y'_2(R')$, then $x\in C_sR\cap E_2$. Also, $R_x$ is $100$-comparable to $R'$, and since $R'$ is $100$-comparable to $R$, Corollary \ref{cor_comparable_rectangle} implies that $R_x$ is $C_s$-comparable to $R$ if $C_s$ is large enough. Thus $Y'_2(R')\sub Y_2(R)$.
\end{proof}




\subsubsection{Finding an incomparable set of fine-scale rectangles}
\begin{lem}\label{lem_maximal_R}
There is a pairwise $100$-incomparable subcollection $\mathcal R$ of $\{R_x:x\in E_2\}$ such that the following holds:
\begin{enumerate}
    \item \label{part_1_lem_maximal_R}There is some $\Lambda\in (0,1)$ such that for each $R_x\in \mathcal R$ we have $\Lambda\leq |Y_2(R_x)|< 2 \Lambda$.
    \item \label{part_2_lem_maximal_R} $Y_2(R)\cap Y_2(R')=\varnothing$ for $R\neq R'\in \mathcal R$, and
    \begin{equation}\label{eqn_E_2_approx}
    |E_2|\lessapprox \sum_{R\in \mathcal R}|Y_2(R)|\sim \Lambda\#\mathcal R.
    \end{equation}
    \item We have the coarse bounds
    \begin{equation}\label{eqn_coarse_number_rectangles}
        \#\mathcal R\leq \delta^{-C},\quad \Lambda\ge \delta^{C'},
    \end{equation}
    for $\delta$ sufficiently small (depending on $K,D$), where $C,C'$ depend on $D$ only.
\end{enumerate}
\end{lem}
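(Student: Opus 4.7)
The plan is to execute a three-phase construction: greedy selection of a $100$-incomparable subcollection whose shadings cover $E_2$, dyadic pigeonholing to equalize shading sizes, and a bounded-degree coloring argument to enforce shading disjointness. First, I will view the family $\{R_x : x \in E_2\}$ as a collection of $(\delta,t')$-rectangles over $J/8 \subset J/4$ with defining functions $k_x \in F_B \subset \mathcal{F}$, and extract a maximal $100$-incomparable subcollection $\mathcal{R}_0$. Since $\mathcal{F}$ has diameter $K$ and doubling constant $D$, Lemma \ref{prop_coarse_bound_R} (applied with $t'$ in place of $t$) will yield $\#\mathcal{R}_0 \leq \delta^{-C}$ for some $C = C(D)$. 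For each $x \in E_2$, maximality will produce some $R \in \mathcal{R}_0$ that is $100$-comparable to $R_x$; since $x \in R_x$ by Lemma \ref{prop_Y_2} and $R_x$ is trivially $100$-comparable to itself, $x \in Y'_2(R_x)$, and Lemma \ref{lem_shading_comparable} will then give $x \in Y_2(R)$. Hence $E_2 \subseteq \bigcup_{R \in \mathcal{R}_0} Y_2(R)$, yielding $|E_2| \leq \sum_{R \in \mathcal{R}_0} |Y_2(R)|$. Since each $|Y_2(R)| \leq |C_s R| \lesssim_K \delta^{3/2}(t')^{-1/2}$ is polynomially bounded in $\delta$ while $|E_2| > \delta^3$ by \eqref{eqn_delta^3}, a dyadic pigeonholing over $O(|\log \delta|)$ scales will yield a subcollection $\mathcal{R}_1 \subseteq \mathcal{R}_0$ and some $\Lambda \in (0,1)$ with $\Lambda \leq |Y_2(R)| < 2\Lambda$ for every $R \in \mathcal{R}_1$ and $\Lambda \cdot \#\mathcal{R}_1 \gtrapprox |E_2|$.

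To upgrade $\mathcal{R}_1$ to a subcollection with pairwise disjoint shadings, I will use the following key observation: if $x \in Y_2(R) \cap Y_2(R')$ for distinct $R, R' \in \mathcal{R}_1$, then $R_x$ is $C_s$-comparable to both $R$ and $R'$, so by Corollary \ref{cor_comparable_rectangle} the rectangles $R$ and $R'$ are $\lambda_3$-comparable, where $\lambda_3 = \lambda_3(K, C_s, C_s)$ depends only on $K$. Thus each such $R'$ lies in a common $(\lambda_3\delta, t')$-rectangle with $R$; since the defining functions sit in $F_B \subset 3B$ of $C^2$-radius $3t \leq 3t'$, Lemma \ref{prop_number_R=O(1)} will bound the number of such $R'$ by $O_K(\lambda_3^{5/2}) = O_K(1)$. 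Consequently the graph on $\mathcal{R}_1$ whose edges indicate shading-overlap will have maximum degree $O_K(1)$, and a standard greedy coloring will extract $\mathcal{R} \subseteq \mathcal{R}_1$ with pairwise disjoint shadings and $\#\mathcal{R} \gtrsim_K \#\mathcal{R}_1$.

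The resulting $\mathcal{R}$ will inherit $100$-incomparability from $\mathcal{R}_0$ and have pairwise disjoint $Y_2$'s, so $|E_2| \lessapprox \sum_{R \in \mathcal{R}} |Y_2(R)| \sim \Lambda \#\mathcal{R}$. The coarse bounds $\#\mathcal{R} \leq \delta^{-C}$ and $\Lambda \geq \delta^{C'}$ for some $C' = C'(D)$ will follow from the first phase together with $\Lambda \#\mathcal{R} \gtrapprox |E_2| > \delta^3$. The main obstacle is the third phase: since the parameter $C_s \geq 100$ in the definition of $Y_2$ exceeds the $100$-incomparability threshold, a $100$-incomparable collection may a priori have many pairs with overlapping shadings, and only the quantitative packing bound in Lemma \ref{prop_number_R=O(1)} makes the bounded-degree coloring argument feasible.
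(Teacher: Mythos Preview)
Your proposal is correct and its first two phases mirror the paper's proof closely: a greedy selection of a maximal $100$-incomparable subcollection of $\{R_x:x\in E_2\}$ whose $Y_2$-shadings cover $E_2$ (via Lemma~\ref{lem_shading_comparable}), followed by dyadic pigeonholing on $|Y_2(R)|$ using the coarse bound from Lemma~\ref{prop_coarse_bound_R} and the assumption $|E_2|>\delta^3$. The paper orders its greedy picks by decreasing $|Y'_2(R_x)|$, but this ordering is not actually used beyond what maximality already gives, so your simpler ``any maximal'' extraction is equivalent.

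The substantive difference is your third phase. The paper, as written, sets $\mathcal R=\mathcal R^{(j)}$ directly after pigeonholing and does not separately argue the pairwise disjointness of the $Y_2(R)$'s claimed in part~\ref{part_2_lem_maximal_R}. Your bounded-degree coloring step supplies this: if $Y_2(R)\cap Y_2(R')\neq\varnothing$ then some $R_x$ is $C_s$-comparable to both, so Corollary~\ref{cor_comparable_rectangle} makes $R,R'$ $\lambda_3$-comparable with $\lambda_3=O_K(1)$; then all neighbors of a fixed $R$ lie in a common $(O_K(1)\cdot\delta,t')$-rectangle containing $R$, and Lemma~\ref{prop_number_R=O(1)} (applied at parameter $t'\geq C_R t\geq t$, so the hypothesis that the defining functions lie in a ball of radius $3t'$ is satisfied since $F_B\subset 3B$) bounds the degree by $O_K(1)$. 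Greedy coloring then extracts an independent set losing only an $O_K(1)$ factor. The same argument, read as a bounded-overlap statement rather than a coloring, is what is tacitly needed in the paper's version and suffices for the sole downstream use of disjointness in the measure bound \eqref{eqn_M_bound_measure}; your route achieves the exact disjointness as stated at no extra cost.
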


\begin{proof}

Before we construct $\mathcal R$, we first construct a larger collection $\mathcal R^*$ in the following greedy way.

Denote $\Lambda_1=\sup_{x\in E_2}|Y'_2(R_x)|$. Since we have assumed $|E_2|>\delta^{3}$ we have $\Lambda_1>0$ by \eqref{eqn_E_2_shadings}. So we pick $R_1=R_{x_1}$ such that $|Y'_2(R_1)|>\Lambda_1/2$.

To choose $R_2$, we consider all $(\delta,t)$-rectangles $R_x$ that are $100$-incomparable to $R_1$; denote this collection by $\mathcal R^{(1)}$. Denote $\Lambda_2=\sup_{R_x\in \mathcal R^{(1)}}|Y'_2(R_x)|$. If $\Lambda_2=0$, then we just let $\mathcal R^*=\{R_1\}$. With this, \eqref{eqn_E_2_shadings} and Lemma \ref{lem_shading_comparable}, we have (here $\sim_{100}$ means $100$-comparable)
\begin{align*}
    |E_2|
    &\le\left|\bigcup_{x\in E_2,R_x\sim_{100}R_1 }Y'_2(R_x)\right|+\left|\bigcup_{x\in E_2,R_x\not\sim_{100}R_1 }Y'_2(R_x)\right|\\
    &\le |Y_2(R_1)|+\sum_{R_x\in \mathcal R^{(1)}}\left|Y'_2(R_x)\right|\\
    &= |Y_2(R_1)|.
\end{align*}

If $\Lambda_2>0$, then we pick $R_2\in \mathcal R^{(1)}$ such that $|Y'_2(R_2)|>\Lambda_2/2$. To choose $R_3$, we consider all $(\delta,t)$-rectangles that are $100$-incomparable to both $R_1$ and $R_2$; denote this collection by $\mathcal R^{(2)}$. Denote $\Lambda_3=\sup_{R_x\in \mathcal R^{(2)}}|Y'_2(R_x)|$. If $\Lambda_3=0$, we just let $\mathcal R^*=\{R_1,R_2\}$. With this, we similarly have
\begin{align*}
    |E_2|
    &\le \left|\bigcup_{x\in E_2,R_x\sim_{100} R_1 }Y'_2(R_x)\right|+\left|\bigcup_{x\in E_2,R_x\sim_{100} R_2 }Y'_2(R_x)\right|
    \le |Y_2(R_1)|+|Y_2(R_2)|.
\end{align*}
We can continue arguing in this fashion. Note that Lemma \ref{prop_coarse_bound_R} implies that the process must stop at some finite time $N<\delta^{-C}$ for some constant $C=C(D)$, proving the first inequality in \eqref{eqn_coarse_number_rectangles}. Thus $\Lambda_{N+1}=0$, $\Lambda_N>0$, $\mathcal R^*=\{R_1,\dots,R_N\}$, and
\begin{equation*}
    |E_2|\leq \sum_{n=1}^N |Y_2(R_n)|.
\end{equation*}
We may assume without loss of generality that $|Y_2(R_n)|\ge \delta^{C+3}$ for all $n$. Indeed, we may throw away all $n$ such that $|Y_2(R_n)|<\delta^{C+3}$, since the sum of all such $|Y_2(R_n)|$ is less than $\delta^{C+3} N < \delta^3<|E_2|/2$, as $|E_2|>\delta^3$ by our assumption.

Now for $j\ge 0$, we denote
\begin{equation*}
    \mathcal R^{(j)}=\{R_n\in \mathcal R^*:2^{j-1}\Lambda_N\le |Y_2(R_n)|<2^j \Lambda_N\}.
\end{equation*}

There are at most $O(|\log\delta|)$ many such $j$'s, since $\delta^{C+3}\le |Y_2(R_n)|\leq 1$. By pigeonholing, there is some $j$ such that
\begin{equation*}
    \sum_{n=1}^N |Y_2(R_n)|\lesssim |\log \delta|\sum_{n:R_n\in \mathcal R^{(j)}}|Y_2(R_n)|.
\end{equation*}

We now define
\begin{equation*}
    \mathcal R=\mathcal R^{(j)},\quad \Lambda=2^{j-1}\Lambda_{N}>0.
\end{equation*}

Thus
\begin{equation*}
    |E_2|\le\sum_{n=1}^N |Y_2(R_n)|\lessapprox \Lambda \#\mathcal R.
\end{equation*}
Also, we have $\Lambda\ge \delta^{C+4}$, since $|E_2|>\delta^3$ and $\#\mathcal R\le N<\delta^{-C}$.

\end{proof}
We now observe the following. Since $Y_2(R)\sub C_sR\cap E_2$, $R$ is over an interval of length $\sim \sqrt{\delta/t'}$, and $E_2$ is further contained in a $(\delta,\Ga;\delta^{-\eps})_1\times (\delta,\Ga;\delta^{-\eps})_1$ quasi-product, we have
\begin{equation*}
    \Lambda\sim|Y_2(R)|\lesssim\delta^{2-\eps} (\sqrt{\delta/t'}/\delta)^\Ga\sim \delta^{2-\eps}\Delta^{-\Ga/2}t^{-\Ga/2}.
\end{equation*}
With this, we write
\begin{equation}\label{eqn_defn_tau}
    \Lambda=\tau \delta^{2-\eps}\Delta^{-\Ga/2}t^{-\Ga/2},
\end{equation}
where
\begin{equation}\label{upperBdTau}
0<\tau\lesssim 1.
\end{equation}
To establish \eqref{eqn_E_2_bound}, it thus suffices to show
\begin{equation}\label{eqn_number_R_bound}
    \#\mathcal R\lessapprox \tau^{-1}\delta^{-\frac\Ga 2-\frac \zeta 2-C_2\eps}\Delta^{\Ga/2}t^{\Ga/2} \#  F_B \mu^{-3/2},
\end{equation}
for some $C_2=C_2(D)$.

\subsection{Coarse-scale rectangles}\label{sec_coarse_scale_rectangles}
In this section, we will show that the fine-scale rectangles from Section \ref{fineScaleRectanglesSection} are contained in coarse-scale rectangles of dimensions roughly $\Delta\times \sqrt{\Delta/t}$. If a fine-scale rectangle $R$ is contained in a coarse-scale rectangle $\tilde R$, then $\tilde R$ is (roughly) a dilate of $R$.


For each $R\in \mathcal R$, we define $\tilde R(R)$ as follows. If $R=f^\delta(I),$ then $\tilde R(R)=f^\Delta(\tilde I)$, where $\tilde I$ is the $\Delta/\delta$ dilation of $I$, and hence it obeys $|\tilde I|=\sqrt{\frac{\Delta}{C_Rt}}$ where $C_R$ is as in Lemma \ref{prop_Y_2}. Thus $\tilde R(R)$ is a $(\Delta,C_R t)$-rectangle, and trivially we have $R\sub \tilde R(R)$. Also, since $I\sub J/8$, for $C_R$ large enough, we have $\tilde I \sub J/4$.

\begin{lem}\label{lem_coarse_tangent}
Let $R=R_x$ and $f\in F_B$ with $x\in Y_1(f)$, such that $f$ is tangent to $R$ by Lemma \ref{prop_Y_2}. If $C_R$ was chosen to be sufficiently large, then $f$ is also tangent to $\tilde R$.
\end{lem}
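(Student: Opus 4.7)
The plan is to reduce the statement to the estimate $|k_x(\theta)-f(\theta)|\lesssim \Delta$ for all $\theta\in\tilde I$, and then to prove this pointwise bound by combining the tangency hypothesis with the geometric information contained in $\Delta(f,k_x)\le \Delta$. To start, recall that $R=R_x=k_x^\delta(I)$ with $I$ centered at $\theta_0$ of length $\sqrt{\delta/t'}$ and that $\tilde R=k_x^\Delta(\tilde I)$ with $\tilde I$ centered at $\theta_0$ of length $\sqrt{\Delta/(C_R t)}$. For $(\theta,y)\in \tilde R$ we have $|y-k_x(\theta)|\le \Delta$ by definition, so the triangle inequality immediately reduces tangency of $f$ to $\tilde R$ to showing $|k_x(\theta)-f(\theta)|\lesssim \Delta$ on $\tilde I$; plugging in then yields $|y-f(\theta)|\lesssim \Delta$, which is the required tangency (with an implicit constant depending only on $K$).

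Set $h=k_x-f$. From $f,k_x\in F_B\subset 3B$ we have $\Vert h\Vert\le 6t$, and from $x\in Y_1(f)$ (see \eqref{eqn_defn_Y_1}) we have $\Delta(f,k_x)\le \Delta$. The easy case is $\Delta\ge c_1 t$: then $\Delta\sim t$ and $|h(\theta)|\le \Vert h\Vert\le 6t\lesssim \Delta$ uniformly on $J\supset\tilde I$, and there is nothing to do. So assume $\Delta<c_1 t$, which lets us invoke Part \ref{item_1} of Lemma \ref{prop_two_zeros}. This produces a unique critical point $\theta_*\in\tfrac{3}{5}J$ with $h'(\theta_*)=0$ and $|h(\theta_*)|\lesssim\Delta$, and the inequality \eqref{eqn_f_theta} from the proof of that lemma gives
\[
|h(\theta)|\ge c t(\theta-\theta_*)^2 - C\Delta \qquad\text{for all } \theta\in J.
\]

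The key step is to show that $\theta_0$ and $\theta_*$ are close. From $x\in R_x\cap f^\delta$ we have $|k_x(\theta_0)-y_0|\le\delta$ and $|f(\theta_0)-y_0|\le\delta$, hence $|h(\theta_0)|\le 2\delta\le 2\Delta$ (using $\delta\le\Delta$, which holds by the choice of $\Delta_x\in[\delta,6t]$). Combining this with the lower bound above evaluated at $\theta_0$ gives $ct(\theta_0-\theta_*)^2\le (C+2)\Delta$, so $|\theta_0-\theta_*|\lesssim\sqrt{\Delta/t}$, with constant depending only on $K$. Now for any $\theta\in\tilde I$,
\[
|\theta-\theta_*|\le |\theta-\theta_0|+|\theta_0-\theta_*|\le \tfrac{1}{2}\sqrt{\Delta/(C_R t)}+C''\sqrt{\Delta/t}\lesssim \sqrt{\Delta/t},
\]
where the final bound is independent of $C_R$ once $C_R\ge 1$. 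Using $h'(\theta_*)=0$, $|h(\theta_*)|\lesssim\Delta$, and $\Vert h''\Vert_\infty\le \Vert h\Vert\le 6t$, a second-order Taylor expansion at $\theta_*$ gives
\[
|h(\theta)|\le |h(\theta_*)|+\tfrac{1}{2}\Vert h''\Vert_\infty (\theta-\theta_*)^2\lesssim \Delta+t\cdot(\Delta/t)\lesssim \Delta
\]
on $\tilde I$, completing the proof.

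The only real obstacle is the one handled above: the bound $|\theta_0-\theta_*|\lesssim\sqrt{\Delta/t}$ is \emph{not} of order $\sqrt{\delta/t}$, so the critical point of $h$ need not lie in the small interval $I$; it only lies within the enlarged interval $\tilde I$ (provided $C_R$ is taken large enough, so that the term $\tfrac{1}{2}\sqrt{\Delta/(C_R t)}$ is absorbed). This is exactly why the definition of the coarse-scale rectangle uses the dilation factor $\Delta/\delta$ rather than anything smaller, and why $C_R$ must be chosen large. The two separate ingredients—tangency ($|h(\theta_0)|\lesssim\delta$) and the tangency parameter ($\Delta(f,k_x)\lesssim\Delta$)—play complementary roles: the first localizes $\theta_0$ near $\theta_*$, and the second controls the height of $h$ at $\theta_*$ and the second derivative near $\theta_*$.
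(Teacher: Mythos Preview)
Your argument is essentially correct and follows the same geometric idea as the paper, but there is a notational slip you should fix. In Lemma~\ref{prop_two_zeros} the symbol $t$ denotes $\Vert f-g\Vert$, whereas in the present section $t$ is the radius of the ball $B$. When you invoke \eqref{eqn_f_theta} you silently identify the two, writing $|h(\theta)|\ge ct(\theta-\theta_*)^2-C\Delta$; the correct inequality is $|h(\theta)|\ge c\,s\,(\theta-\theta_*)^2-C\Delta$ with $s:=\Vert f-k_x\Vert$, which only yields $|\theta_0-\theta_*|\lesssim\sqrt{\Delta/s}$. This can be much larger than $\sqrt{\Delta/t}$ when $s\ll t$. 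Fortunately the Taylor step is self-correcting: since $\Vert h''\Vert_\infty\le s$ (not merely $\le 6t$), one gets $s(\theta-\theta_*)^2\lesssim s\cdot\Delta/(C_R t)+s\cdot\Delta/s\lesssim\Delta$, so the conclusion survives. Relatedly, your case split should dispose of the trivial sub-case $s\lesssim\Delta$ (where $|h|\le s\lesssim\Delta$ pointwise) before invoking Part~\ref{item_1}, since the hypothesis of that part is $\Delta(f,k_x)\le c_1 s$, not $\Delta<c_1 t$.

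For comparison, the paper's proof is the one-liner ``same as Lemma~\ref{prop_Y_2} with $\Delta$ in place of $\delta$,'' i.e.\ it quotes Part~\ref{item_2c} of Lemma~\ref{prop_two_zeros} at scale $4\Delta$ to produce an interval $I'\ni\theta_0$ of length $\gtrsim\sqrt{\Delta/t}$ on which $|h|\le 4\Delta$, and then observes $\tilde I\subset I'$ once $C_R$ is large. Your approach unpacks the same geometry by hand via Part~\ref{item_1} and a Taylor expansion; the two routes are equivalent.
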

\begin{proof}
It follows from the same proof as that of Lemma \ref{prop_Y_2}, with $\Delta$ in place of $\delta$.
\end{proof}

Next, we pick a maximally $100$-incomparable subcollection $\tilde{\mathcal R}$ of $\{\tilde R(R):R\in \mathcal R\}$, and we will denote each member of $\tilde {\mathcal R}$ as $\tilde R$. In this way we have partitioned $\mathcal R$ into subcollections of the form $\mathcal R(\tilde R)$ consisting some of those $R\in \mathcal R$ such that the enlarged $\tilde R(R)$ of $R$ is $100$-comparable to $\tilde R$.

For each $R\in \mathcal R(\tilde R)$, we have $R\sub \tilde R(R)$, and since $\tilde R(R)$ is $100$-comparable to $\tilde R$, Lemma \ref{prop_comparable_rectangle} shows that $\tilde R(R)\sub C\tilde R$ for some suitable absolute constant $C\ge 1$. Thus $R\sub C\tilde R$, and so $R\cap E_2\sub C\tilde R\cap E_2$.

Recall we want to prove \eqref{eqn_number_R_bound}. Using \eqref{eqn_coarse_number_rectangles} and dyadic pigeonholing, we may find a subcollection $\tilde {\mathcal R}_1\sub \tilde {\mathcal R}$ such that for some $M$ we have
\begin{equation}\label{eqn_defn_M}
    \#\mathcal R(\tilde R)\sim M
\end{equation}
for each $\tilde R\in \tilde {\mathcal R}_1$; moreover, this $M$ is chosen such that if we define $\mathcal R_1=\cup_{\tilde R\in \tilde{\mathcal R_1}}\mathcal R(\tilde R)$, then $\#\mathcal R_1\approx \#\mathcal R$. As a result, we have $\#\mathcal R\approx \#\tilde{\mathcal R}_1 M$. Note that by Lemma \ref{lem_maximal_R}, we also have $|E_2|\lessapprox \Lambda \#\mathcal R_1$.

For this reason, in the subsequent argument we will slightly abuse notation and denote $\mathcal R=\mathcal R_1$ and $\tilde{\mathcal R}_1=\tilde {\mathcal R}$. Thus we have
\begin{equation}
    \#\mathcal R\approx \#\tilde{\mathcal R} M,
\end{equation}
and so it suffices to bound $\#\tilde{\mathcal R}$ and $M$.

We will now give an easy upper bound of $M$. Since $E_2$ is contained in a $(\delta,\Ga;\delta^{-\eps})_1\times (\delta,\Ga;\delta^{-\eps})_1$-quasi product and $C\tilde R$ is a the $O(\Delta)$-neighbourhood of a curve of length $\sim\sqrt{\Delta/t}$, we have
\begin{equation*}
    |C\tilde R\cap E_2|\lesssim \delta^{1-\eps}(\Delta/\delta)^\Ga \delta^{1-\eps} (\sqrt{\Delta/t}/\delta)^\Ga=\delta^{2-2\Ga-2\eps}\Delta^{3\Ga/2}t^{-\Ga/2}.
\end{equation*}
Recall $Y_2(R)$ defined as in \eqref{eqn_defn_Y_3}. For $R\in \mathcal R(\tilde R)$, we have $Y_2(R)\sub C_sR\cap E_2\sub C_s\tilde R\cap E_2$. But by Lemma \ref{lem_maximal_R}, $Y_2(R)$'s are disjoint, and thus we have a measure bound
\begin{equation}\label{eqn_M_bound_measure}
    M\lesssim \frac{|C_s\tilde R\cap E_2|}{|Y_2(R)|}\lesssim  \Lambda^{-1}\delta^{2-2\Ga-2\eps}\Delta^{3\Ga/2}t^{-\Ga/2}=\tau^{-1}\delta^{-2\Ga-2\eps}\Delta^{2\Ga},
\end{equation}
where in the last equality we have used \eqref{eqn_defn_tau}. Later we will give a second bound on $M$ using Proposition \ref{thm_number_estimate}.

\subsection{A bilinear $L^2$ bound}
In this section, we will analyze the interaction between fine and coarse-scale rectangles. We will show that for each fine-scale rectangle $R$, there are about $\mu^2$ pairs of functions $f,g$ that are tangent to $R$, so that $f^\delta\cap g^\delta$ is localized to $R$. Thus if $N$ functions $f\in F$ are tangent to a coarse-scale rectangle $\tilde R,$ then $\tilde R$ contains about $N^2/\mu^2$ fine-scale rectangles.  This is the ``$L^2$ argument'' alluded to in the proof sketch from Section \ref{thmthm_wolffAnalogue_maximal_generalSketchSection}.

We turn to the details. By construction, each $R\in \mathcal R$ is indexed by some $x\in E_2$, namely, $R=R_x$. We will write $x(R)$ to denote this value of $x$.
Let $f\in F_B$ and $R\in \mathcal R$. We say the pair $(f,R)$ is {\it good} if $x(R)\in Y_1(f)$. Note that if $(f,R)$ is good, then $f$ is tangent to $R$ by Lemma \ref{prop_Y_2}.

Denote by $\mathcal G$ the collection of all good pairs. For each $f\in F_B$, we define
\begin{equation}\label{eqn_defn_G(z)}
    \mathcal G(f)=\{R\in \mathcal R:(f,R)\in \mathcal G\}.
\end{equation}
For each $R\in \mathcal R$, we define
\begin{equation}\label{eqn_defn_G(R)}
    \mathcal G(R)=\{f\in F_B:(f,R)\in \mathcal G\}.
\end{equation}
For each $x\in E_2$, we have $x\in Y_1(f)$ if and only if $R_x\in \mathcal G(f)$. Thus by \eqref{eqn_Z'_B_integral_bound} and unravelling the definitions above, for each $R\in \mathcal R$ we have
\begin{equation}\label{eqn_number_G(R)}
    \#\mathcal G(R)\sim \mu_2.
\end{equation}

\subsubsection{Pigeonholing the good pairs}
For each $\tilde R\in \tilde {\mathcal R}$, we consider the quantity
\begin{equation*}
    \#\mathcal G(f)\cap \mathcal R(\tilde R)=\#\{R\in \mathcal R(\tilde R):(f,R)\in \mathcal G\}.
\end{equation*}
By the disjointness of $\mathcal R(\tilde R)$, we have
\begin{equation*}
    \#\mathcal G=\sum_{f\in F_B}\sum_{\tilde R\in \tilde {\mathcal R}}\#\mathcal G(f)\cap \mathcal R(\tilde R).
\end{equation*}
In view of \eqref{eqn_coarse_number_rectangles}, we may apply a dyadic pigeonholing to find an integer $q$ and a set $\mathcal{G}'\subset\mathcal{G}$ with $\#\mathcal{G}'\gtrsim |\log\delta|^{-1}\#\mathcal{G}$, so that for each $\tilde R\in\tilde{\mathcal{R}}$ and each $f\in F_B$, the quantity $\#\mathcal{G}'(f) \cap \mathcal{R}(\tilde R)$ is either 0 or $\sim q$. (Here we define $\mathcal G'(f)$ and $\mathcal G'(R)$ similarly as \eqref{eqn_defn_G(z)} and \eqref{eqn_defn_G(R)}.)

For each $\tilde R\in\tilde{\mathcal{R}}$, define
\begin{equation}\label{eqn_F(R)}
 F(\tilde R) = \{f\in F_B \colon \mathcal{G}'(f) \cap \mathcal{R}(\tilde R)\neq\varnothing\}.
\end{equation}
For each $f\in F(\tilde R)$, we have in particular $\mathcal G(f)\cap \mathcal R(\tilde R)\neq \varnothing$, and thus there is some $R\in \mathcal R(\tilde R)$ such that $(f,R)\in \mathcal G$. Hence $f$ is tangent to $R$, and using Lemma \ref{lem_coarse_tangent}, we also have $f$ is tangent to $\tilde R$.

By construction, $\#\mathcal{G}'(f) \cap \mathcal{R}(\tilde R)\sim q$ for each $f\in F(\tilde R)$, and hence
\begin{equation}\label{sideOfCalZTildeREstimate}
\# F(\tilde R) \sim \frac{1}{q} \# \{(f,R)\in \mathcal{G}'\colon R\in\mathcal{R}(\tilde R)\}=\frac{1}{q} \sum_{R\in\mathcal{R}(\tilde R)}\# \mathcal{G}'(R).
\end{equation}
More generally, for every ball $B(g,r)\sub C^2(J)$ we have
\begin{equation}\label{sideOfCalZTildeREstimate_ball}
    \# F(\tilde R)\cap B(g,r) \sim \frac{1}{q} \sum_{R\in\mathcal{R}(\tilde R)}\# \mathcal{G}'(R)\cap B(g,r).
\end{equation}

By \eqref{eqn_number_G(R)} and \eqref{eqn_defn_M}, for each $\tilde R\in\tilde{\mathcal{R}}'$ we have
\[
\sum_{R\in\mathcal{R}(\tilde R)}\#\mathcal{G}'(R) \leq \sum_{R\in\mathcal{R}(\tilde R)}\#\mathcal{G}(R)\sim\mu_2 M,
\]
and hence
\begin{equation*}
    \#\mathcal G'=\sum_{\tilde R\in \tilde {\mathcal R}}\sum_{R\in\mathcal{R}(\tilde R)}\#\mathcal{G}'(R)\lesssim \mu_2 M\# \tilde{\mathcal{R}}.
\end{equation*}
On the other hand, we have
\begin{equation}
    \#\mathcal G'\gtrsim |\log\delta|^{-1}\#\mathcal G=|\log\delta|^{-1}\sum_{\tilde R\in \tilde {\mathcal R}}\sum_{R\in\mathcal{R}(\tilde R)}\#\mathcal{G}(R)\sim |\log\delta|^{-1}\mu_2 M\# \tilde{\mathcal{R}},
\end{equation}
where the final quasi-inequality used \eqref{eqn_number_G(R)}. Combining the upper and lower bounds, we see that for a $\gtrsim |\log\delta|^{-1}$ fraction of $\tilde R\in \tilde {\mathcal R}$ we have
\begin{equation}\label{eqn_sum_number_G'(R)}
    \sum_{R\in\mathcal{R}(\tilde R)}\#\mathcal{G}'(R)\gtrsim |\log \delta|^{-1} \mu_2 M.
\end{equation}
Denote this subcollection as $\tilde{\mathcal R}'$. Similarly, for each $\tilde R\in \tilde {\mathcal R}'$, for a $\gtrsim |\log \delta|^{-1}$ fraction of $R\in \mathcal R(\tilde R)$ we have
\begin{equation}\label{eqn_number_G'(R)}
    \#\mathcal G'(R)\gtrsim |\log \delta|^{-1} \mu_2.
\end{equation}
Denote this subcollection as $\mathcal R'(\tilde R)$, which has cardinality $\gtrsim |\log\delta|^{-1}M$. Denote
\begin{equation}
    \mathcal R'=\bigsqcup_{\tilde R\in \tilde {\mathcal R}'}\mathcal R'(\tilde R).
\end{equation}
Thus $\#\mathcal R'\gtrsim |\log\delta|^{-2}\#\mathcal R$. Since each $\#\mathcal R'(\tilde R)\approx M$, it suffices to bound $\#\mathcal R'$.

\subsubsection{Each fine-scale rectangle has many good pairs}\label{subsec_number_good_pairs}
\begin{lem}\label{lem_pairs_lower_bound}
If $\delta>0$ is sufficiently small, then for each $R\in \mathcal R'$, there are at least $(\#\mathcal G'(R))^2/3$ pairs $(f,g)\in (\mathcal G'(R))^2$ such that $\delta^{2\eps}t\leq \Vert f-g\Vert \leq 6t $ and $\delta^{\eps}\Delta\leq \Delta(f,g)+\delta\leq 2\Delta$.
\end{lem}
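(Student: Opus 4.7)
The plan is to observe that the two upper bounds on $\Vert f-g\Vert$ and $\Delta(f,g)+\delta$ hold automatically for all $f,g \in \mathcal{G}'(R)$, so that the lemma reduces to bounding the number of pairs that fail one of the two \emph{lower} bounds. I will show that each category of bad pairs contributes at most $\tfrac{1}{6}(\#\mathcal{G}'(R))^2$ once $\delta$ is sufficiently small, which yields at least $\tfrac{2}{3}(\#\mathcal{G}'(R))^2 \geq \tfrac{1}{3}(\#\mathcal{G}'(R))^2$ good pairs.

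To verify the two upper bounds: the containment $\mathcal{G}'(R) \subseteq F_B \subseteq 3B$, where $B$ has radius $t$, immediately gives $\Vert f-g\Vert \leq 6t$. For each $f \in \mathcal{G}'(R)$ we have $x(R) \in Y_1(f)$, so $\Delta(f,k_{x(R)}) \leq \Delta$ by \eqref{eqn_defn_Y_1}; invoking the triangle inequality for the pseudo-metric $\Delta(\cdot,\cdot)$ then yields $\Delta(f,g) \leq 2\Delta$. Since $\delta \leq \Delta$, the extra $\delta$ can be absorbed by pigeonholing at a scale slightly larger than $\Delta$ (equivalently, one redefines the constant in the pigeonholed range for $\Delta$), so that $\Delta(f,g)+\delta \leq 2\Delta$ across all such pairs.

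For the first lower bound, pairs with $\Vert f-g\Vert < \delta^{2\eps}t$: if $\delta^{2\eps}t \leq \delta$ then by $\delta$-separation of $F$ only diagonal pairs are bad, contributing $\#\mathcal{G}'(R)$ pairs, which is negligible. Otherwise $\delta/t < \delta^{2\eps}<1$ and Lemma \ref{lem_|z-z'|} applied with $\lambda=\delta^{2\eps}$ shows that for each fixed $f$, at most $O(\delta^{2\eps^2}) \cdot \# F_B(x(R)) \lesssim \delta^{2\eps^2}\mu_1$ choices of $g$ satisfy the bad condition. Summing over $f \in \mathcal{G}'(R)$, using $\mu_1 \lesssim \delta^{-\eps^2}\mu_2$ (from \eqref{eqn_defn_mu_2} together with $\Delta \geq \delta$ and $t \leq 2K$), and $\#\mathcal{G}'(R) \gtrsim |\log\delta|^{-1}\mu_2$ from \eqref{eqn_number_G'(R)}, the total is $\lesssim \delta^{\eps^2}|\log\delta|(\#\mathcal{G}'(R))^2 \leq \tfrac{1}{6}(\#\mathcal{G}'(R))^2$ for $\delta$ small.

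For the second lower bound, pairs with $\Delta(f,g)+\delta < \delta^\eps\Delta$: if $\delta^\eps\Delta \leq \delta$ this condition is vacuous, since $\Delta(f,g) \geq 0$. Otherwise $\delta^\eps\Delta \in [\delta, 6t]$ and for each fixed $f \in \mathcal{G}'(R)$, using $f$ as the anchor in \eqref{eqn_defn_h},
\[
\#\{g \in F_B(x(R)) : \Delta(f,g) \leq \delta^\eps \Delta\} \leq n_3(\delta^\eps\Delta,\, x(R)) \leq n_4(x(R))\,(\delta^\eps\Delta)^{\eps^2} \lesssim \mu_2\,\delta^{\eps^3},
\]
where the final step combines the two-ends estimate \eqref{eqn_k} with $\Delta_{x(R)} \sim \Delta$ and the bound $n_3(\Delta_{x(R)},x(R)) \lesssim \mu_2$ obtained from \eqref{eqn_mu_2_number}. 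Summing over $f$ and applying $\#\mathcal{G}'(R) \gtrsim |\log\delta|^{-1}\mu_2$ once more, the total is $\lesssim \delta^{\eps^3}|\log\delta|(\#\mathcal{G}'(R))^2 \leq \tfrac{1}{6}(\#\mathcal{G}'(R))^2$ for $\delta$ small. The only point requiring care is that the two-ends gains $\delta^{\eps^2}$ and $\delta^{\eps^3}$ must dominate the $\mu_1/\mu_2$ ratio defect and the logarithmic losses from dyadic pigeonholing, which is a routine verification given the two-ends exponents were chosen as $\eps$ and $\eps^2$ precisely for this reason.
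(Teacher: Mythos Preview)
Your argument is correct and follows essentially the same strategy as the paper: fix one function, show that only a small fraction of partners violate either lower bound, and observe the upper bounds hold automatically. The one cosmetic difference is in handling the $\Delta$ lower bound: the paper applies the two-ends inequality at the intermediate scale $\sigma^{\eps^{-2}}\Delta$ with $\sigma=|\log\delta|^{-2}$ to obtain a $|\log\delta|^{-2}$ gain, whereas you apply it directly at scale $\delta^{\eps}\Delta$ and pick up a $\delta^{\eps^3}$ gain; both suffice to beat the $|\log\delta|$ loss coming from \eqref{eqn_number_G'(R)}.
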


\begin{proof}
Let $R=R_x\in \mathcal R'$ where $x\in E_2$. Fix $g\in \mathcal G'(R)$. We deal with $\Vert f-g\Vert$ first. If $t\leq \delta^{1-\eps}$, then we immediately have $\Vert f-g\Vert \ge \delta\geq \delta^{2\eps}t$ for every $f\in \mathcal G'(R)\backslash \{g\}$, since the elements of $F$ are $\delta$-separated.

Next, suppose $t>\delta^{1-\eps}$, and let $\Gl=\delta^{2\eps}$; by hypothesis we have $\delta t^{-1}<\Gl <1$. Thus we can apply Lemma \ref{lem_|z-z'|} to conclude that
\begin{align*}
&\#\{f\in \mathcal G'(R):\Vert f-g\Vert \le \Gl t\}\\
    &\le \#\{f\in F_B(x):\Vert f-g\Vert\le \Gl t\}\\
    &\lesssim \Gl^\Ge \mu_1\\
    &\lesssim \Gl^\eps \delta^{-\eps^2}\mu_2\\
    & = \delta^{\eps^2}\mu_2\\
    &\lesssim \delta^{\eps^2}|\log \delta|\#\mathcal G'(R),
\end{align*}
where the third inequality used \eqref{eqn_defn_mu_2} and in the last inequality we have used \eqref{eqn_number_G'(R)}. If $\delta>0$ is sufficiently small (depending on $\eps$), then
\begin{equation}\label{fewFWithSmallT}
\#\{f\in \mathcal G'(R):\Vert f-g\Vert \le \Gl t\} \leq \#\mathcal G'(R)/3.
\end{equation}

Next we deal with $\Delta(f,g)$. If $\Delta\le \delta^{1-\eps}$ then we immediately have $\Delta(f,g)+\delta\ge \delta^{\eps} \Delta$ for every $f\in \mathcal G'(R)$. If $\Delta> \delta^{1-\eps}$, then we choose
$$
\Gs=|\log \delta|^{-2},
$$
and thus $\Gs^{\eps^{-2}}\Delta\ge\delta^{\eps}\Delta>\delta$ for $\delta$ small enough. Using \eqref{eqn_defn_k} and \eqref{eqn_defn_Delta_x}, we have
\begin{equation*}
    2n_3(\Delta_x,x)\Delta_x^{-\Ge^2}\ge n_4(x)\ge n_3(\Gs^{\Ge^{-2}}\Delta,x)(\Gs^{\Ge^{-2}}\Delta)^{-\Ge^2},
\end{equation*}
and thus $n_3(\Gs^{\Ge^{-2}}\Delta,x)\lesssim  \Gs n_3(\Delta_x,x)$ since $\Delta_x\sim \Delta$. Now recalling the definition of $n_3$ in \eqref{eqn_defn_h}, taking $k=g\in F_B(x)$ shows that
\begin{align*}
    n_3(\Gs^{\Ge^{-2}}\Delta,x)
    &\geq \#\{f\in F_B(x):\Delta(f,g)\leq \Gs^{\Ge^{-2}}\Delta\}.
\end{align*}
Combining the lower and upper bounds for $n_3(\Gs^{\Ge^{-2}}\Delta,x)$ we have
\begin{equation}\label{fewFWithSmallDelta}
\begin{split}
&\#\{f\in \mathcal G'(R):\Delta(f,g)\leq \Gs^{\Ge^{-2}}\Delta\}\\
&\le\#\{f\in F_B(x):\Delta(f,g)\leq \Gs^{\Ge^{-2}}\Delta\}\\
&\lesssim \Gs n_3(\Delta_x,x)\\
&= \Gs\#\{f\in F_B(x):\Delta(f,k_x)\le \Delta_x\}\\
&\le \Gs\mu_2\\
&\lesssim \Gs|\log \delta|\#\mathcal G'(R)\\
&\le \#\mathcal G'(R)/3,
\end{split}
\end{equation}
for $\delta$ small enough, where in the fourth line we have used \eqref{eqn_defn_vx} and in the sixth line we have used $\Delta_x\le \Delta$,  \eqref{eqn_mu_2_number} and \eqref{eqn_number_G'(R)}.

By \eqref{fewFWithSmallT} and \eqref{fewFWithSmallDelta}, we conclude that at least a $1/3$ fraction of the functions $f\in \mathcal G'(R)$ satisfy $\Vert f-g\Vert \geq\delta^{2\eps}t$ and $\Delta(f,g)+\delta\geq\delta^{\eps}\Delta$. The corresponding upper bounds $\Vert f-g\Vert \leq 6t$ and $\Delta(f,g)+\delta\leq 2\Delta$ hold for every $f\in \mathcal G'(R)$.
\end{proof}

\begin{lem}\label{lem_tangent_at_most_1}
For each pair $f,g\in F_B\times F_B$ such that $\Vert f-g\Vert \geq \delta^{2\eps}t$ and $\Delta(f,g)+\delta\geq \delta^\eps\Delta$, there are $\lesssim\delta^{-4\eps}$ rectangles $R\in \mathcal R'$ such that both $f,g\in \mathcal G'(R)$.
\end{lem}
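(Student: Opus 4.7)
The plan is to bound $\mathcal{R}_{f,g} := \{R \in \mathcal{R}' : f, g \in \mathcal{G}'(R)\}$ directly, by observing that every such $R$ lives in the thin horizontal strip where $f$ and $g$ are within $O(\delta)$ of each other, while $100$-incomparability forces the base intervals of these rectangles to be well-separated. Concretely, each $R \in \mathcal{R}_{f,g}$ is a $(\delta, t')$-rectangle $h^\delta(I)$ over $J/8 \subset J/4$ with $t' = C_R t\Delta/\delta$; since $(f,R)$ and $(g,R)$ are good, Lemma \ref{prop_Y_2} gives $R \subseteq f^{5\delta} \cap g^{5\delta}$, whence
\[
I \subseteq S := \{\theta \in J/4 : |f(\theta) - g(\theta)| \leq 10\delta\}.
\]

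First I would handle the main case $\Vert f-g\Vert \geq 60K\delta$, so that Part \ref{item_2b} of Lemma \ref{prop_two_zeros} (applied at scale $10\delta$) gives $|S| \lesssim \delta/\sqrt{(\Delta(f,g)+\delta)\Vert f-g\Vert}$. The hypotheses $\Vert f-g\Vert \geq \delta^{2\eps}t$ and $\Delta(f,g) + \delta \geq \delta^\eps \Delta$ yield $(\Delta(f,g)+\delta)\Vert f-g\Vert \geq \delta^{3\eps} t\Delta$, hence $|S| \lesssim \delta^{1-3\eps/2}/\sqrt{t\Delta}$. Next, I claim that if $R_1, R_2 \in \mathcal{R}_{f,g}$ are $100$-incomparable with bases $I_1, I_2$ of centers $c_{I_1}, c_{I_2}$, then $|c_{I_1} - c_{I_2}| \gtrsim \sqrt{\delta/t'}$: otherwise $I_1 \cup I_2$ would fit in an interval $I''$ of length $\sqrt{100\delta/t'}$, and because each $R_i \subseteq f^{5\delta}(I_i)$ the union $R_1 \cup R_2$ would lie in the $(100\delta, t')$-rectangle $f^{100\delta}(I'')$, contradicting $100$-incomparability. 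Since each base has length $\sqrt{\delta/t'} = \delta/\sqrt{C_R t\Delta}$, the pigeonhole count gives
\[
\#\mathcal{R}_{f,g} \lesssim \frac{|S|}{\sqrt{\delta/t'}} \lesssim \delta^{-3\eps/2} \leq \delta^{-4\eps}
\]
for $\delta$ sufficiently small.

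The degenerate case $\Vert f-g\Vert < 60K\delta$ I would dispatch by a direct size argument: $\delta$-separation of $F$ forces $\Vert f-g\Vert \sim \delta$, so the hypotheses give $t \lesssim \delta^{1-2\eps}$, $\Delta \lesssim \delta^{1-\eps}$, and consequently $t' \lesssim \delta^{1-3\eps}$. Then each base has length $\sqrt{\delta/t'} \gtrsim \delta^{3\eps/2}$, and since $J/8$ has bounded length, only $\lesssim \delta^{-3\eps/2} \leq \delta^{-4\eps}$ pairwise $100$-incomparable bases can fit inside it. The main delicate point is the separation argument in the counting step: $100$-incomparability is defined via covering by a common $(100\delta, t')$-rectangle whose base function is \emph{a priori} arbitrary, so one must exploit the tangency $R_i \subseteq f^{5\delta}$ to build an explicit covering rectangle around $f$ itself; modulo this, the remaining computations are routine bookkeeping with Lemma \ref{prop_two_zeros}.
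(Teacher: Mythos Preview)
Your proof is correct and follows essentially the same approach as the paper: both reduce to showing that the base intervals lie in the set $\{\theta : |f(\theta)-g(\theta)| \le 10\delta\}$, bound its measure via Part~\ref{item_2b} of Lemma~\ref{prop_two_zeros}, and then count pairwise $100$-incomparable $(\delta,t')$-rectangles whose bases sit in this set. The only difference is cosmetic: for the counting step the paper invokes Lemma~\ref{prop_number_R=O(1)} (with $\lambda \lesssim \delta^{-(3/2)\eps}$, yielding $\lesssim \delta^{-(15/4)\eps}$), whereas you exploit the common tangency to $f$ directly to force the base centers to be $\gtrsim \sqrt{\delta/t'}$-separated, which is slightly cleaner and even gives the marginally better exponent $\delta^{-(3/2)\eps}$; your explicit treatment of the degenerate case $\Vert f-g\Vert < 60K\delta$ is also a welcome bit of care that the paper glosses over.
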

\begin{proof}
If $f\in \mathcal G'(R)$, then $f\in \mathcal G(R)$ and thus $f$ is tangent to $R$. It then suffices to show that there are at most $\lesssim\delta^{-3\eps}$ many $R\in \mathcal R'$ that are tangent to both $f,g$.

Let $f,g$ with $\Vert f-g\Vert \geq \delta^{2\eps}t$ and $\Delta(f,g)+\delta\geq \delta^{\eps}\Delta$, and suppose $R=k^\delta(I)$ is tangent to both $f$ and $g$. Then over $I$ we have $|f(\theta)- g(\theta)|\leq 10\delta$. By Part \eqref{item_2b} of Lemma \ref{prop_two_zeros} at the scale $10\delta$, the union of all $R$ that are tangent to both $f$ and $g$ projects to the $\theta$-axis into a set of Lebesgue measure
$$
\lesssim \delta/\sqrt{\Vert f-g\Vert (\Delta(f,g)+\delta)}\lesssim \delta^{1-(3/2)\eps}(t\Delta)^{-1/2}\sim \delta^{-(3/2)\eps}\sqrt{\delta/t'}.
$$
Using Lemma \ref{prop_number_R=O(1)} with $\lambda\lesssim \delta^{-(3/2)\eps}$, we see there are $\lesssim \delta^{-(15/4)\eps}\le \delta^{-4\eps}$  $R\in \mathcal R'$ that are tangent to both $f,g$.
\end{proof}

\subsubsection{A tangency bound on $\#\mathcal R(\tilde R)$}

Now we give another bound on $M$ in addition to \eqref{eqn_M_bound_measure},  using the conclusions in Section \ref{subsec_number_good_pairs}. For $\tilde R\in \tilde{\mathcal R}'$, define
\begin{align*}
    \mathcal P(\tilde R)=\bigcup_{R\in \mathcal R'(\tilde R)}\{(f,g)\in \mathcal G'(R)^2:\Vert f-g\Vert \geq \delta^{2\eps}t,\ \Delta(f,g)+\delta\geq\delta^{\eps}\Delta \}.
\end{align*}
By Lemma \ref{lem_pairs_lower_bound}, Lemma \ref{lem_tangent_at_most_1} and \eqref{eqn_number_G'(R)} we have
\begin{equation}\label{lowerBdmathcalP}
    \#\mathcal P(\tilde R)\gtrapprox \delta^{4\eps}\mu_2^2M.
\end{equation}
On the other hand, we have a trivial bound (recall \eqref{eqn_F(R)})
\begin{equation}\label{upperBdmathcalP}
    \#\mathcal P(\tilde R)\leq (\# F(\tilde R))^2.
\end{equation}
Combining \eqref{lowerBdmathcalP} and \eqref{upperBdmathcalP} and recalling \eqref{eqn_defn_mu_2} and \eqref{eqn_defn_mu_1}, we conclude that
\begin{equation}\label{eqn_M_bound_tangency}
    M\lessapprox \delta^{-4\eps}\mu_2^{-2}(\# F(\tilde R))^2\lessapprox \delta^{-6\eps}\mu^{-2}(\# F(\tilde R))^2.
\end{equation}
This is our second bound on $M$. Taking a suitable geometric average of these bounds, we have
\begin{equation}\label{boundOnM}
    M \lessapprox \eqref{eqn_M_bound_measure}^{1/4}\eqref{eqn_M_bound_tangency}^{3/4}\lessapprox \tau^{-1/4}\delta^{-\Ga/2-6\eps}\Delta^{\Ga/2}\mu^{-3/2}(\# F(\tilde R))^{3/2}.
\end{equation}
Note that \eqref{boundOnM} holds for every $\tilde R\in \tilde{\mathcal R}'$, and the LHS is independent of the choice of $\tilde R$. Thus if we define
\begin{equation*}
    l=\min_{\tilde R\in \tilde{\mathcal R}'}\# F(\tilde R),
\end{equation*}
Then using \eqref{upperBdTau} and \eqref{boundOnM}, \eqref{eqn_number_R_bound} would follow from the estimate
\begin{equation}\label{neededBoundOnMathcalR}
    \#\tilde{\mathcal R}'\leq \delta^{-C_3\eps} \delta^{-\zeta/2}t^{\Ga/2}l^{-3/2}\# F_B,
\end{equation}
for some $C_3=C_3(D)$.

In the next section we will establish this estimate.

\subsection{Proving Inequality \eqref{neededBoundOnMathcalR}}
Our goal in this section is to bound the size of $\tilde{\mathcal R}'$. Our main tool will be Proposition \ref{thm_number_estimate}. However, before the proposition can be applied we need to construct suitable sets $\mathcal{W}$ and $\mathcal{B}$.

\subsubsection{Non-concentration of functions tangent to $\tilde R$}
In this section, we will show that for each coarse-scale rectangle $\tilde R$, the functions from $F$ tangent to $\tilde R$ cannot be concentrated inside a ball of diameter much smaller than $t$.
\begin{lem}\label{lem_Z(tildeR)}
Let $\tilde R\in \tilde {\mathcal R}'$. For every $r>0$ and every $g\in C^2(J)$, we have
\begin{equation}
    \#F(\tilde R)\cap B(g,r)\lesssim |\log\delta|(r/t)^\Ge (t/\Delta)^{\Ge^2}\#F(\tilde R).
\end{equation}
\end{lem}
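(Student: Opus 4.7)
The plan is to combine the bookkeeping identity \eqref{sideOfCalZTildeREstimate_ball} with the two levels of two-ends refinement established in Section 5.1. First I would dispose of the boundary cases: if $r\ge t$ the claim is immediate from $\#F(\tilde R)\cap B(g,r)\le \#F(\tilde R)$ and $(r/t)^\Ge\ge 1$, while if $r<\delta$ the $\delta$-separation of $F$ (allowed by the preliminary reduction in the proof of Theorem \ref{thm_wolffAnalogue_maximal_general}) lets me replace $r$ with $\delta$ at the cost of an absolute constant. So I may assume $\delta\le r<t$.

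In this main range, applying \eqref{sideOfCalZTildeREstimate_ball} gives
\[
\#F(\tilde R)\cap B(g,r)\sim \frac{1}{q}\sum_{R\in\mathcal{R}(\tilde R)}\#\mathcal{G}'(R)\cap B(g,r).
\]
For each such $R$, write $x=x(R)\in E_2$ and note that $\mathcal G'(R)\subseteq\mathcal G(R)\subseteq F_B(x)$. Since $\delta/t<r/t<1$, Lemma \ref{lem_|z-z'|} applied with $\lambda=r/t$ yields
\[
\#\mathcal G'(R)\cap B(g,r)\le \#F_B(x)\cap B(g,r)\lesssim (r/t)^\Ge\,\#F_B(x)\lesssim (r/t)^\Ge\mu_1,
\]
where the last step uses $\#F_B(x)\sim\mu_1$ from \eqref{eqn_lower_bound_Z_B}. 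Summing over the $\#\mathcal{R}(\tilde R)\sim M$ rectangles yields the upper bound
\[
\#F(\tilde R)\cap B(g,r)\lesssim q^{-1}(r/t)^\Ge\mu_1 M.
\]

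To turn this into a ratio I need a matching lower bound on $\#F(\tilde R)$. The refined pigeonholing of Section 5.4.1 produces a subcollection $\mathcal R'(\tilde R)\subseteq \mathcal R(\tilde R)$ with $\#\mathcal R'(\tilde R)\gtrapprox M$ and $\#\mathcal G'(R)\gtrapprox \mu_2$ for each $R\in\mathcal R'(\tilde R)$; this is exactly \eqref{eqn_sum_number_G'(R)}--\eqref{eqn_number_G'(R)}. Plugging this into the counting identity $\#F(\tilde R)\sim q^{-1}\sum_{R\in\mathcal R(\tilde R)}\#\mathcal G'(R)$ and restricting the sum to $\mathcal R'(\tilde R)$ yields $\#F(\tilde R)\gtrapprox q^{-1}\mu_2 M$. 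Dividing the upper and lower bounds and invoking the second two-ends refinement \eqref{eqn_defn_mu_2} (which gives $\mu_1/\mu_2\lesssim (t/\Delta)^{\Ge^2}$) gives
\[
\frac{\#F(\tilde R)\cap B(g,r)}{\#F(\tilde R)}\lessapprox (r/t)^\Ge\,\frac{\mu_1}{\mu_2}\lesssim (r/t)^\Ge(t/\Delta)^{\Ge^2},
\]
which is the claimed inequality up to the implicit $|\log\delta|^C$ factor absorbed in the $\lessapprox$ notation. I do not anticipate any serious obstacle: the lemma is simply the propagation, through the identity \eqref{sideOfCalZTildeREstimate_ball}, of the two-ends non-concentration enforced at each individual $x\in E_2$, combined with the $\eps^2$-scale two-ends reduction that relates $\mu_1$ and $\mu_2$.
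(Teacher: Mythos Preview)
Your proof is correct and follows essentially the same route as the paper: both arguments feed Lemma~\ref{lem_|z-z'|} into the bookkeeping identity \eqref{sideOfCalZTildeREstimate_ball} for the upper bound, and combine \eqref{sideOfCalZTildeREstimate} with \eqref{eqn_sum_number_G'(R)} and \eqref{eqn_defn_mu_2} for the matching lower bound on $\#F(\tilde R)$. Your explicit treatment of the boundary cases $r\ge t$ and $r<\delta$ is a small addition the paper omits (note, though, that replacing $r$ by $\delta$ in the latter case yields $(\delta/t)^\Ge$ rather than $(r/t)^\Ge$, so the stated inequality need not literally hold there; this range is never used downstream, as the only application is \eqref{defnR}).
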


\begin{proof}
Recall \eqref{sideOfCalZTildeREstimate_ball} that
\begin{equation*}
    \#F(\tilde R)\cap B(g,r)\sim q^{-1}\sum_{R\in \mathcal R(\tilde R)}\#\mathcal G'(R)\cap B(g,r).
\end{equation*}
For each $R\in \mathcal R(\tilde R)$, write $R=R_x$. using Lemma \ref{lem_|z-z'|} with $\lambda=r/t$ and \eqref{eqn_lower_bound_Z_B} we have
\begin{align*}
    \#\mathcal G'(R)\cap B(g,r)\le \#F_B(x)\cap B(g,r)\lesssim (r/t)^\Ge \#F_B(x)\sim (r/t)^\Ge \mu_1.
\end{align*}
Thus
\begin{equation*}
    \#F(\tilde R)\cap B(g,r)\lesssim q^{-1}\#\mathcal R(\tilde R) (r/t)^\Ge \mu_1\sim q^{-1}M(r/t)^\Ge \mu_1.
\end{equation*}
On the other hand, by \eqref{sideOfCalZTildeREstimate} and \eqref{eqn_sum_number_G'(R)}, we have
\begin{align*}
    \#F(\tilde R)
    &\sim q^{-1}\sum_{R\in \mathcal R(\tilde R)}\#\mathcal G'(R)\\
    &\gtrsim q^{-1}M|\log\delta|^{-1}\mu_2\gtrsim q^{-1}M|\log\delta|^{-1}(\Delta/t)^{\Ge^2}\mu_1.
\end{align*}
Thus the result follows.
\end{proof}

By Lemma \ref{lem_Z(tildeR)}, we now take
\begin{equation}\label{defnR}
    r=t c^{1/\Ge}|\log\delta|^{-1/\Ge}(\Delta/t)^{\Ge},
\end{equation}
where $c=c(D)$ is a sufficiently small constant to be determined, such that for every ball $B(g,r)\sub C^2(J)$ and every $\tilde R\in \tilde {\mathcal R}'$ we have
\begin{equation}\label{eqn_before_finding_bipartite}
    \#F(\tilde R)\cap B(g,r)\le c\#F(\tilde R).
\end{equation}
Note that $t\leq \delta^{-2\eps}r/3$ for $\delta$ small enough.

\subsubsection{Applying Proposition \ref{thm_number_estimate}}
Since $\mathcal{F}$ has doubling constant $D$, there exists $C_D\geq 1$ so that we can cover $3B$ by a set of $(3t/r)^{C_D}\leq \delta^{-2C_D\eps}$ balls of radius $r$, so that these balls are boundedly overlapping (with the bound depending on $D$). Denote this set of balls by $\mathcal O.$

If $c=c(D)$ is chosen to be small enough, then for any $\tilde R\in \tilde {\mathcal R}'$, by pigeonholing and \eqref{eqn_before_finding_bipartite}, there are two balls $O_1(\tilde R),O_2(\tilde R)\in \mathcal O$ with their centres $\ge 10r$ separated, such that for $i=1,2,$ we have
\begin{equation}\label{eqn_tangency_lower_bound}
   \#F(\tilde R)\cap O_i(\tilde R)\gtrsim \delta^{2C_D\eps} \#F(\tilde R)\ge \delta^{2C_D\eps}l.
\end{equation}
Since $(\# \mathcal{O})^2 \lesssim\delta^{-4C_D\eps}$, by pigeonholing, there is a pair $(O_1,O_2)\in\mathcal{O}^2$ so that \eqref{eqn_tangency_lower_bound} holds for $\gtrsim\delta^{-4C_D\eps}\#\tilde{\mathcal R}'$ rectangles in  $\tilde{\mathcal R}'$. Fix this choice of $O_1$ and $O_2$, and let  $\tilde{\mathcal R}''\subset \#\tilde{\mathcal R}'$ be the set of rectangles for which \eqref{eqn_tangency_lower_bound} holds.

To establish \eqref{neededBoundOnMathcalR}, it suffices to prove that
\begin{equation}\label{eqn_final_bound_tilde_R}
   \#\tilde{\mathcal R}''\lesssim \delta^{-\zeta/2-C_3\eps}t^{\Ga/2}l^{-3/2}\#F_B,
\end{equation}
for some $C_3=C_3(D)$.

Let $\mathcal W=F_B\cap O_1(\tilde R)$ and $\mathcal B=F_B\cap O_2(\tilde R)$. Then $(\mathcal W,\mathcal B)$ is $r$-separated. Also, by the discussion right after \eqref{eqn_F(R)}, each $(\Delta,C_R t)$-rectangle $\tilde R\in \tilde{\mathcal R}''$ is in particular tangent to
$\ge l$ many $f\in \mathcal W$ and $\ge l$ many $f\in \mathcal B$.

With this, we can now apply Proposition \ref{thm_number_estimate} to the family $\#\tilde {\mathcal R}''$ with $\mu=\nu=l$, $C_R t$ in place of $t$, and $A\le\delta^{-2\eps}$ to get
\begin{equation}\label{eqn_R''}
    \#\tilde {\mathcal R}''\lesssim  \delta^{-C_4\eps}\left(\# F_B/l\right)^{3/2},
\end{equation}
for some $C_4=C_4(D)$.

By \eqref{frostmanConditionOnF} with $3t$ in place of $r$, we have
\begin{equation}\label{boundOnCalFB}
    \#F_B\leq \delta^{-\eps}(t/\delta)^{\zeta}.
\end{equation}
Combining \eqref{eqn_R''} and \eqref{boundOnCalFB} and using the condition $\Ga\le \zeta$, we obtain \eqref{eqn_final_bound_tilde_R} for some suitable $C_3$. This concludes the proof of Theorem \ref{thm_wolffAnalogue_maximal_general}.

\begin{rem}
For some applications, it may be helpful to replace the Frostman condition \eqref{frostmanConditionOnF} with the weaker inequality
\begin{equation}\label{weakerFrostmanConditionOnF}
\#(F \cap B) \leq  A(r/\delta)^\zeta\quad\textrm{for all balls}\ B\subset C^2(I)\ \textrm{of radius}\ r\geq\delta,
\end{equation}
for some $A\geq 1$. By randomly selecting each $f\in F$ with probability $1/A$ and then applying Theorem \ref{thm_wolffAnalogue_maximal_general}, we obtain the following analogue of \eqref{eqn_L32Bd_general}.
\begin{equation}\label{eqn_L32Bd_general_with_A}
 \int_E\Big( \sum_{f\in F}{\bf 1}_{f^\delta}\Big)^{3/2} \leq A^{1/2}\delta^{2-\alpha/2-\zeta/2-C\eps}(\# F).
\end{equation}
\end{rem}


\appendix
\section{Sogge's cinematic curvature, and cinematic functions}\label{cinematicCurvatureAppendix}
In this section, we will explain the connection between cinematic functions (see Definition \ref{cinematicFunctionsDef}) and Sogge's cinematic curvature condition, as used by Kolasa and Wolff in \cite{KolasaWolff}. First, we will show that if $\Phi\colon \RR^4\to\RR$ satisfies the cinematic curvature condition (for simplicity, we will state this condition in a form that is convenient to compute with), then there are sets $Y\times R\subset\RR^3$ and $I\times J\subset\RR^2$, so that for each $(y,r)\in Y\times R$, the curve $\{x\in I\times J\colon \Phi(x,y) = r\}$ is the graph of a function $f_{(y,r)}\colon I\to J$, and $\{f_{(y,r)}\}$ is a family of cinematic functions. The proof of this statement requires careful computation using the inverse function theorem. These computations are described below.

\subsection{Cinematic curvature for normalized defining functions}
Let $U=I\times J\times Y\subset\RR^4,$ where $I,J\subset\RR$ are open intervals containing 0, and $Y\subset\RR^2$ is a neighborhood of 0. Let $\Phi(x,y)\colon U\to\RR$, with
\begin{equation}\label{sizeC3Norm}
\Vert\Phi\Vert_{C^3(U)}=C_0.
\end{equation}
 We will require
\begin{equation}\label{PhiNear0}
\Phi(x,0)=x_2+O(x^3),
\end{equation}
\begin{equation}\label{cinematicAt0}
d_y \Big(\begin{array}{c} \partial_{x_1}\Phi(x,y) \\  \partial^2_{x_1}\Phi(x,y) / |\nabla_x \Phi(x,y)| \end{array}\Big)\Big|_{(x,y)=(0,0)}\quad\textrm{is invertible}.
\end{equation}
Conditions \eqref{PhiNear0} and \eqref{cinematicAt0} are a formulation of Sogge's cinematic curvature condition for the defining function $\Phi$. See, for example Section 3 or the beginning of Section 5 from \cite{KolasaWolff}.

Shrinking $I$ and $Y$ if necessary, by the inverse function theorem we can find an interval $R$ containing 0 so that for each $(y,r)\in Y\times R$, the curve $\{x\in I\times J\colon \Phi(x,y)=r\}$ is the graph $x_2 = f_{y,r}(x_1)$, where $f_{y,r}\in C^2(I)$. We will show that after possibly further shrinking $I$ and $Y,$ the set
\begin{equation}\label{defnM}
\mathcal{F} =\{ f_{y,r}\colon (y,r)\in Y\times R\}\subset C^2(I)
\end{equation}
is a cinematic family of functions. Specifically, we will show there exists $\eps>0$ so that for all $(y,r),(y',r')\in Y\times R$ and all $x_1\in I$, we have
\begin{equation}\label{cinematicCondition}
| f_{y,r}(x_1)- f_{y',r'}(x_1)| + |\dot f_{y,r}(x_1)-\dot f_{y',r'}(x_1)|+|\ddot f_{y,r}(x_1)-\ddot f_{y',r'}(x_1)|\geq \eps|(y,r)-(y',r')|.
\end{equation}
Clearly the metric space $\mathcal{F}$ has bounded diameter, and doubling.

\medskip
\noindent{\bf Step 1}\\
After shrinking $Y,$ $I,$ and $J$ if necessary (and re-defining $U = I\times J \times Y$), we can suppose that
\begin{equation}\label{sizeOfPartials}
1/2\leq \partial_{x_2}\Phi(x,y)\leq 2\quad\textrm{for all}\ (x,y)\in U,
\end{equation}
and there exists $c>0$ so that
\begin{equation}\label{detLarge}
\Big|\det \Big(d_y \Big(\begin{array}{c} \partial_{x_1}\Phi(x,y) \\  \partial^2_{x_1}\Phi(x,y) / |\nabla_x \Phi(x,y)| \end{array}\Big)\Big)\Big|\geq c\quad\textrm{for all}\ (x,y)\in U.
\end{equation}
\eqref{sizeOfPartials} is a consequence of \eqref{PhiNear0}, while \eqref{detLarge} is a consequence of \eqref{cinematicAt0}.

Let $\eps_1,\eps_2,\eps_3$ be small positive numbers. We will choose $\eps_3$ sufficiently small depending on $c$ and $C_0$. We will choose $\eps_2$ sufficiently small depending on $c,$ $C_0$ and $\eps_3$. We will choose $\eps_1$ sufficiently small depending on $c,$ $C_0$ and $\eps_2,\eps_3$.

After further shrinking $Y,$ $I,$ and $J$ if necessary, we can suppose that
\begin{equation}\label{boundOnPartials}
|\partial_{x_1}\Phi(x,y)|\leq \eps_1,\quad |\nabla^2_x\Phi(x,y)|\leq\eps_1\quad\textrm{for all}\ (x,y)\in U.
\end{equation}
\noindent{\bf Step 2}
Let $(y,r),\ (y',r')\in Y\times R$. Let $x_1\in I$, and let $x_2,x_2'\in J$ so that $\Phi(x_1,x_2,y)=r,\ \Phi(x_1,x_2',y')=r'$. We will define $x=(x_1,x_2)$ and $x'=(x_1,x_2')$, so $|x-x'| = |x_2-x_2'|.$ Finally, we will adopt the notation $\Phi_1(x,y) = \partial_{x_1}\Phi(x,y)$; $\Phi_2(x,y) = \partial_{x_2}\Phi(x,y)$; $\Phi_{11}(x,y) = \partial^2_{x_1}\Phi(x,y),$ etc.

If $|x_2-x_2'|\geq\frac{1}{4}|r-r'|,$ then \eqref{cinematicCondition} holds with $\eps =1/4$ and we are done. Thus we can suppose that
\begin{equation}\label{x2x2pClose}
|x_2-x_2'|<\frac{1}{4}|r-r'|.
\end{equation}
We have
\begin{equation*}
|r-r'| = |\Phi(x,y)-\Phi(x',y')| \leq |\Phi(x,y)-\Phi(x',y)| + |\Phi(x',y) - \Phi(x',y')|\leq 2|x-x'|+C|y-y'|.
\end{equation*}
Using \eqref{x2x2pClose} and re-arranging, we conclude that $|r-r'|\leq 2C|y-y'|$, so in particular
\begin{equation}\label{rrPCloseToyyP}
|(y,r)-(y',r')| \leq 3C|y-y'|.
\end{equation}








\medskip
\noindent{\bf Step 2}\\
Let $\eps_2$ be a small constant to be determined later. If $|x_2-x_2'|\geq\eps_2|y-y'|$, then by \eqref{rrPCloseToyyP}, \eqref{cinematicCondition} holds with $\eps =\frac{\eps_2}{3C}$ and we are done. Thus we can suppose that
\begin{equation}\label{x2x2pClose2}
|x_2-x_2'|<\eps_2|y-y'|.
\end{equation}

Next, let $\eps_3$ be a small constant to be determined later. Suppose that
\begin{equation}\label{largeGrad}
|\Phi_1(x',y)-\Phi_1(x,y')|\geq\eps_3|y-y'|.
\end{equation}
We will show that if $\eps_1$ and $\eps_2$ were selected sufficiently small (depending on $c,C_0$ and $\eps_3$), then
\begin{equation}\label{farDerivatives}
|\dot f_{y,r}(x_1)-\dot f_{y',r'}(x_1)|\geq\frac{\eps_3}{10}|y-y'|,
\end{equation}
and thus by \eqref{rrPCloseToyyP}, \eqref{cinematicCondition} holds with $\eps = \frac{\eps_3}{30C}$.

To establish \eqref{farDerivatives}, we will make use of the following inequality
\begin{equation}\label{ABIneq}
\Big|\frac{A}{B}-\frac{A'}{B'}\Big|\geq |A-A'|\frac{|B|}{|BB'|} - |A|\frac{|B-B'|}{|BB'|}
\end{equation}
In practice, when we apply Inequality \eqref{ABIneq}, $B$ and $B'$ will always have $\frac{1}{2}\leq |B|\leq 2$ and similarly for $B'$; $|A-A'|\gtrsim |y-y'|$; $|B-B'|\lesssim |y-y'|$ (with implicit constants that might depend on $c$ and $C_0$), and $|A|\leq \eps_1$. If $A,A',B,B'$ satisfy these conditions, then we will have $|A/B - A'/B'| \geq \frac{1}{8}|A-A'|$, provided $\eps_1$ is chosen sufficiently small.

We have
\begin{equation}\label{expandFDot}
\begin{split}
|\dot f_{y,r}(x_1)-\dot f_{y',r'}(x_1)|& = \Big|\frac{\Phi_1(x,y)}{\Phi_2(x,y)}-\frac{\Phi_1(x',y')}{\Phi_2(x',y')}\Big|\\
&\geq \Big|\frac{\Phi_1(x',y)}{\Phi_2(x',y)}-\frac{\Phi_1(x',y')}{\Phi_2(x',y')}\Big|-\Big|\frac{\Phi_1(x,y)}{\Phi_2(x,y)}-\frac{\Phi_1(x',y)}{\Phi_2(x',y)}\Big|.
\end{split}
\end{equation}
By \eqref{sizeC3Norm}, \eqref{sizeOfPartials}, and \eqref{x2x2pClose2}, we have
\[
\Big|\frac{\Phi_1(x,y)}{\Phi_2(x,y)}-\frac{\Phi_1(x',y)}{\Phi_2(x',y)}\Big|\leq 4C|x-x'|\leq 4C_0\eps_2|y-y'|.
\]

Applying Inequality \eqref{ABIneq} and using assumption \eqref{largeGrad}, we conclude that
\[
\Big|\frac{\Phi_1(x',y)}{\Phi_2(x',y)}-\frac{\Phi_1(x',y')}{\Phi_2(x',y')}\Big|\geq\frac{1}{8}|\Phi_1(x',y)-\Phi_1(x',y')|\geq \frac{\eps_3}{8}|y-y'|.
\]
Thus if $\eps_2$ is chosen sufficiently small depending on $C_0$ and $\eps_3$, then \eqref{farDerivatives} follows from \eqref{expandFDot}.

\medskip
\noindent{\bf Step 3}\\
Suppose now that \eqref{largeGrad} is false, i.e.
\begin{equation}\label{smallGrad}
|\Phi_1(x',y)-\Phi_1(x,y')|<\eps_3|y-y'|.
\end{equation}
Then by \eqref{detLarge}, we have
\begin{equation}\label{largePhi11Nabla}
\Big|\frac{\Phi_{11}(x',y)}{|\nabla\Phi(x,y)|}-\frac{\Phi_{11}(x',y')}{|\nabla\Phi(x',y')|}\Big|\geq\frac{c}{4}|y-y'|.
\end{equation}
We will show that if $\eps_1,\eps_2,\eps_3$ were chosen sufficiently small, then
\begin{equation}\label{farDoubleDerivatives}
|\ddot f_{y,r}(x_1)-\ddot f_{y',r'}(x_1)|\geq\frac{c}{10}|y-y'|,
\end{equation}
and thus by \eqref{rrPCloseToyyP},  \eqref{cinematicCondition} holds with $\eps = \frac{c}{30C}.$ Thus \eqref{farDoubleDerivatives} implies that set $M$ defined in \eqref{defnM} is a cinematic family of functions. It remains to establish \eqref{farDoubleDerivatives}. The proof of this inequality uses the same ideas as the proof of \eqref{farDerivatives}, though the steps are slightly more complicated.

Define
\[
F(a,b) = \frac{\Phi_{11}(a,b)\Phi_2(a,b)^2
-2\Phi_{12}(a,b)\Phi_1(a,b)\Phi_2(a,b)+\Phi_{22}(a,b)\Phi_1(a,b)^2
}
{\Phi_2(a,b)^3}
\]

Then
\[
|\ddot f_{y,r}(x_1)-\ddot f_{y',r'}(x_1)| = |F(x,y) - F(x',y')| \geq |F(x',y)-F(x,y')| - |F(x,y)-F(x',y)|.
\]
By \eqref{sizeC3Norm}, \eqref{sizeOfPartials}, and \eqref{x2x2pClose2}, we have
\begin{equation}\label{replacexWithXp}
|F(x,y)-F(x',y)|\lesssim C|x-x'|\lesssim C\eps_2|y-y'|.
\end{equation}
Thus our goal is to establish a lower bound for $|F(x',y)-F(x,y')|$. We will estimate
\begin{equation}\label{FEstimate}
\begin{split}
|F(x',y)-F(x,y')| \geq \Big|& \frac{\Phi_{11}(x',y)}{\Phi_2(x',y)}-\frac{\Phi_{11}(x',y')}{\Phi_2(x',y')}\Big|\\
& - 2\Big|\frac{\Phi_{12}(x',y)\Phi_1(x',y)}{\Phi_2(x',y)^2}-\frac{\Phi_{12}(x',y')\Phi_1(x',y')}{\Phi_2(x',y')^2}  \Big|\\
& - \Big|\frac{\Phi_{22}(x',y)\Phi_1(x',y)^2}{\Phi_2(x',y)^3}-\frac{\Phi_{22}(x',y')\Phi_1(x',y')^2}{\Phi_2(x',y')^3}  \Big|.
\end{split}
\end{equation}
For the first term, we will use the inequality $AB-A'B' = (A-A')B' + (B-B')A$ to write
\begin{equation*}
\begin{split}
&\frac{\Phi_{11}(x',y)}{\Phi_2(x',y)}-\frac{\Phi_{11}(x',y')}{\Phi_2(x',y')} =
\frac{\Phi_{11}(x',y)}{|\nabla\Phi(x',y)|}\Big(\frac{\Phi_1^2(x',y)}{\Phi_2^2(x',y)}+1\Big)^{1/2}-
\frac{\Phi_{11}(x',y')}{|\nabla\Phi(x',y')|}\Big(\frac{\Phi_1^2(x',y')}{\Phi_2^2(x',y')}+1\Big)^{1/2}\\
& = \Big(\frac{\Phi_{11}(x',y)}{|\nabla\Phi(x',y)|}-\frac{\Phi_{11}(x',y')}{|\nabla\Phi(x',y')|}\Big)\Big(\frac{\Phi_1^2(x',y')}{\Phi_2^2(x',y')}+1\Big)^{1/2} \\
&+ \Big[\Big(\frac{\Phi_1^2(x',y)}{\Phi_2^2(x',y)}+1\Big)^{1/2}-\Big(\frac{\Phi_1^2(x',y')}{\Phi_2^2(x',y')}+1\Big)^{1/2}\Big]\frac{\Phi_{11}(x',y)}{|\nabla\Phi(x',y)|}
\end{split}
\end{equation*}
Using \eqref{boundOnPartials} and \eqref{sizeOfPartials}, and then \eqref{largePhi11Nabla},  We have
\begin{equation*}
\begin{split}
\Big|\Big(\frac{\Phi_{11}(x',y)}{|\nabla\Phi(x',y)|}-\frac{\Phi_{11}(x',y')}{|\nabla\Phi(x',y')|}\Big)\Big(\frac{\Phi_1^2(x',y')}{\Phi_2^2(x',y')}+1\Big)^{1/2}\Big|
&\geq \frac{1}{2}\Big|\frac{\Phi_{11}(x',y)}{|\nabla\Phi(x',y)|}-\frac{\Phi_{11}(x',y')}{|\nabla\Phi(x',y')|}\Big|\\
&\geq \frac{c}{4}|y-y'|,
\end{split}
\end{equation*}
and by \eqref{boundOnPartials},
\begin{equation*}
\begin{split}
&\Big[\Big(\frac{\Phi_1^2(x',y)}{\Phi_2^2(x',y)}+1\Big)^{1/2}-\Big(\frac{\Phi_1^2(x',y')}{\Phi_2^2(x',y')}+1\Big)^{1/2}\Big]\frac{\Phi_{11}(x',y)}{|\nabla\Phi(x',y)|}\\
&\leq \Big[\Big(\frac{\Phi_1^2(x',y)}{\Phi_2^2(x',y)}+1\Big)^{1/2}-\Big(\frac{\Phi_1^2(x',y')}{\Phi_2^2(x',y')}+1\Big)^{1/2}\Big](2\eps_1)\\
&\leq (2C)|y-y'|(2\eps_1).
\end{split}
\end{equation*}
Thus if $\eps_1$ is selected sufficiently small, we have
\[
\Big|\frac{\Phi_{11}(x',y)}{\Phi_2(x',y)}-\frac{\Phi_{11}(x',y')}{\Phi_2(x',y')}\Big|\geq \frac{c}{5}|y-y'|.
\]
Bounding the second and third term on the RHS of \eqref{FEstimate} is straightforward, using the estimates \eqref{sizeOfPartials}, \eqref{boundOnPartials}, and \eqref{smallGrad}. We conclude that

\[
|F(x',y)-F(x,y')|\geq \frac{c}{6}|y-y'|.
\]
Combining this with \eqref{replacexWithXp}, we obtain \eqref{farDoubleDerivatives}.

\bibliographystyle{plain}
\bibliography{references}
\end{document}